\documentclass[12pt,reqno]{amsart}
\usepackage{amsfonts}

\usepackage{amssymb,amsmath,graphicx,amsfonts,euscript}
\usepackage{color}

\newtheorem{thm}{Theorem}[section]

\newtheorem{cor}{Corollary}[section]

\newtheorem{lemma}{Lemma}[section]

\theoremstyle{definition}

\theoremstyle{remark}
\newtheorem{rem}{Remark}[section]

\allowdisplaybreaks \voffset=-0.2in \numberwithin{equation}{section}

\begin{document}

\title[Fractional Boussinesq Equations]{Global regularity of the 2D fractional Boussinesq equations with subcritical dissipation}

\author[Stefanov, Wu, Xu and Ye]{Atanas Stefanov$^{1}$, Jiahong Wu$^{2}$,
Xiaojing Xu$^{3}$  and Zhuan Ye$^{4}$}

\address{$^{1}$ Department of Mathematics, University of Alabama at Birminghan,
Birmingham, AL 35294, USA}

\email{stefanov@uab.edu}

\address{$^2$ Department of Mathematics, University of Notre
	Dame, Notre Dame, IN 46556, USA}

\email{jwu29@nd.edu}

\address{$^3$ School of Mathematical Sciences, Beijing Normal University and
Laboratory of Mathematics and Complex Systems, Ministry of Education, Beijing
100875, P.R. China}

\email{xjxu@bnu.edu.cn}	

\address{$^4$ Department of Mathematics and Statistics, Jiangsu Normal
University, 101 Shanghai Road, Xuzhou 221116, Jiangsu, P.R. China}

\email{yezhuan815@126.com \ \ (the corresponding author)}

\subjclass[2020]{35Q35; 35B65; 76D03}

\keywords{Boussinesq equations; Global regularity; Fractional dissipation}

\medskip
\begin{abstract}
This paper studies the global regularity problem for the two-dimensional incompressible Boussinesq equations with fractional dissipation given by $(-\Delta)^{\frac\alpha2}u$ and $(-\Delta)^{\frac\beta2} \theta$. Attention is focused on the subcritical regime where $\alpha+ \beta>1$. The case $\alpha >\frac23$ was recently settled in a joint work of the authors [Math. Ann., \textbf{391} (2025), 5965-6012], which established global regularity under this condition. This paper addresses the remaining case  $\alpha \leq \frac23$. We obtain the sharpest regularity result by minimizing assumptions on $\alpha$ and $\beta$. We derive nonlinear lower bounds for the fractional Laplacian operator and implement an iterative procedure.
\end{abstract}
\maketitle

\section{Introduction}

This paper examines the following two-dimensional (2D) Boussinesq equations with fractional dissipation
\begin{equation}\label{Bouss}
\left\{\aligned
&\partial_{t}u+(u \cdot \nabla) u+ \Lambda^{\alpha}u+\nabla p=\theta e_{2},\,\,\,\,\,\,\,\,x\in \mathbb{R}^{2},\,\,t>0, \\
&\partial_{t}\theta+(u \cdot \nabla) \theta+ \Lambda^{\beta}\theta=0, \\
&\nabla\cdot u=0, \\
&u(x, 0)=u_{0}(x),  \quad \theta(x,0)=\theta_{0}(x),
\endaligned\right.
\end{equation}
where the unknowns are the velocity vector field $u(x,\,t)=(u_{1}(x,\,t),\,u_{2}(x,\,t))$, the scalar pressure $p=p(x,\,t)$, and the scalar quantity $\theta=\theta(x,\,t)$. Physically $\theta$ denotes the temperature  in the thermal convection or the density in the context of geostrophic
fluids. $e_{2}=(0,1)$ is the unit vector in the vertical direction.
The numbers $\alpha$ and $\beta$ are nonnegative real
parameters. The fractional Laplacian operator
$\Lambda^{\gamma}\triangleq(-\Delta)^{\frac{\gamma}{2}}$ is defined via the
Fourier transform  $\widehat{\Lambda^{\gamma}
f}(\xi)=|\xi|^{\gamma}\hat{f}(\xi)$. We adopt the convention that $\alpha=0$ means no dissipation term in the velocity equation, and $\beta=0$ means no diffusion in the temperature equation.

\vskip .1in
The Boussinesq equations emerge as a zeroth-order approximation to the coupling between the Navier-Stokes equations and thermodynamic equations. They are widely used to model geophysical flows, such as atmospheric fronts and oceanic circulation, and play a crucial role in studying Rayleigh-B\'enard convection (see \cite{MB,Pe1987}). Moreover, the 2D Boussinesq equations and their fractional counterparts hold significant mathematical interest. They serve as 2D models that capture the essential vortex-stretching mechanism present in the 3D Navier-Stokes and Euler equations. In fact, as noted in \cite{MB}, the inviscid Boussinesq equations share a deep formal analogy with the 3D axisymmetric Euler system with swirl.

\vskip .1in
Due to their fundamental role in modeling various astrophysical and geophysical phenomena, the classical Boussinesq equations and their fractional dissipation counterparts have been extensively studied. From a mathematical perspective, if the 2D Boussinesq equations include both full Laplacian dissipation terms, $\Delta u$ and $\Delta\theta$, standard energy methods guarantee the global existence of smooth solutions for arbitrarily large initial data (see, e.g., \cite{Can}). In contrast, determining whether local classical solutions of the inviscid Boussinesq equations can develop finite-time singularities remains an extremely challenging problem. Recently, some significant progress on the inviscid Boussinseq
equations finite-time blowup problem has been made (see, e.g.,
\cite{Chenhou21,Chenhou,Chenhou25,Clmzad,Elgindije,ElgindiPA}). This naturally leads to the fundamental question: how much dissipation is required to ensure global regularity? In recent years, significant progress has been made on the global regularity problem for the Boussinesq equations with partial or fractional dissipation. Chae
\cite{C1} and  Hou-Li \cite{HL} successfully established the global regularity
to (\ref{Bouss}) with $\alpha=2,\ \beta=0$ or $\alpha=0, \ \beta=2$. For the case $\alpha=0, \
\beta=2$ with rough initial data, we refer to \cite{Danchinp09,HK1}. For
$\alpha=0, \beta\in (1,\,2]$, \cite{Hzer10} established the global well-posedness of (\ref{Bouss}) with rough initial data. By introducing the combined quantities, Hmidi, Keraani and Rousset \cite{HK3,HK4} were able to establish the global regularity for the cases when either $\alpha=1,\,\beta=0$ or $\alpha=0,\,\beta=1$. An alternative proof for the case $\alpha=0,\,\beta=1$, which avoids the use of these combined quantities, was given in \cite{YeNA2020}.

\vskip .1in
Jiu, Miao, Wu, and Zhang \cite{JMWZ} discovered that the global regularity problem for \eqref{Bouss} depends crucially on the value of $\alpha+ \beta$. In the special case
$\alpha+ \beta =1$, the Boussinesq regularity problem boils down to the corresponding problem on the generalized surface quasi-geostrophic equation with critical dissipation. This insight led to the classification of $\alpha+ \beta$ into three regimes: the subcritical regime ($\alpha+ \beta>1$), the critical regime ($\alpha+ \beta=1$) and the supercritical regime ($\alpha+ \beta<1$). Naturally, the smaller $\alpha+ \beta$, the more challenging the regularity problem becomes. In particular, the global regularity problem for the supercritical regime remains largely out of reach, while substantial progress has been made in the critical and subcritical cases.

\vskip .1in
Compared to the two special critical cases studied in \cite{HK3,HK4}, analyzing the general critical case $\alpha+\beta=1$ with $0<\alpha,\,\beta<1$ is significantly more challenging. In \cite{JMWZ}, the authors established global regularity for any $\alpha$ and $\beta$ satisfying
$$
\alpha+\beta=1, \quad \alpha>\frac{23-\sqrt{145}}{12}\approx 0.9132.
$$
Subsequent efforts are devoted to enlarge the range of $\alpha$. Stefanov and Wu \cite{SW} improved this result by proving global regularity for
$$
\alpha+\beta=1, \quad \alpha> \frac{\sqrt{1777}-23}{12}\approx 0.7981.
$$
Further progress was made by Wu, Xu, Xue, and Ye \cite{wuxuxueye}, who expanded the range to
$$
\alpha+\beta=1,\quad \alpha>\frac{10}{13}\approx0.7692.
$$
Most recently, in a joint work of the authors \cite{SWXY}, the global regularity result in the critical regime
$$
\alpha+\beta=1,\quad \alpha>\frac23
$$
was established under the mild assumption that the $L^\infty$-norm of the initial temperature is small.

\vskip .1in
This paper focuses on the global regularity of \eqref{Bouss} in the subcritical regime
$\alpha+ \beta >1$. Even the subcritical global regularity problem  is  not trivial and not all subcritical cases have been resolved. For example, we do not know the global regularity for the case when $\alpha$ and $\beta$ are close to one half and $\alpha+\beta>1$. The main difficulty arises because direct energy estimates are insufficient to establish the necessary global {\it a priori} bounds when $\alpha,\beta<1$.

\vskip .1in
To provide a more precise account of existing results, we further divide the subcritical regime into two cases: the velocity dissipation dominated regime ($\alpha\geq\beta$) and the thermal diffusion dominated regime ($\alpha<\beta$).
In the velocity dissipation dominated case, Miao and Xue \cite{MX} obtained the global regularity for (\ref{Bouss}) with
$$ \alpha+\beta>1,\qquad 0.8876\approx\frac{6-\sqrt{6}}{4}<\alpha<1.$$
This result was later refined by \cite{Yemmas}, extending the admissible range to
$$\alpha+\beta>1,\qquad 0.7351\approx\frac{10-2\sqrt{10}}{5}<\alpha<1.$$
Zhou, Li, Shang, Wu, Yuan, and Zhao \cite{zlswyz}, utilizing \cite{Yemmas} and the nonlinear maximum principle for fractional Laplacians developed by Constantin and Vicol \cite{CV}, established global regularity for
\begin{eqnarray}
\beta>\frac{1-\alpha}{\alpha},\quad \frac{2}{3}<\alpha<1.\nonumber
\end{eqnarray}
Very recently, \cite{SWXY} was able to completely solve the global regularity problem for the subcritical range
$$
\alpha+ \beta >1, \quad\frac{2}{3}<\alpha<1.
$$
In the thermal diffusion dominated case, Constantin and Vicol \cite{CV} proved the global regularity of \eqref{Bouss} for
$$\beta>\frac{2}{2+\alpha}, \quad 0<\alpha<1.$$
Ye and Xu \cite{YX201502} extended this result by establishing global regularity under the condition
\begin{equation*}
	\beta>\left\{\aligned
	&\max\Big\{\frac{2}{3},\,\,\frac{4-\alpha^{2}}{4+3\alpha}\Big\},\, \,\,\quad \quad 0<\alpha\leq \frac{2}{3},\\
	&\frac{2-\alpha}{2}, \qquad\qquad\qquad\qquad
	\frac{2}{3}\leq \alpha<1.\\
	\endaligned\right.
\end{equation*}
Recently, \cite{SWXY} improved this result further, establishing the global regularity condition
\begin{eqnarray}\label{sdfghp987}
\beta>\max\Big\{\alpha,\,\,\frac{4-\alpha^{2}}{4+3\alpha}\Big\},\, \,\, \quad 0<\alpha\leq \frac{2}{3}.
\end{eqnarray}

\vskip .1in
As described above, the subcritical range $\alpha+ \beta>1$ with $\alpha>\frac23$ has been completely resolved. Our focus here is on the remaining case, where $\alpha+ \beta>1$ with $\alpha \leq\frac23$, and we aim to sharpen the current best result in this regime. As aforementioned, the existing requirement for
 $0<\alpha\leq\frac{2}{3}$ is \eqref{sdfghp987}. To refine this result, we further divide the range $\alpha \leq \frac23$ into two subregimes:
\begin{equation}\label{aa}
0<\alpha\leq  \sqrt{13}-3\approx 0.6056 \quad\mbox{and}\quad \sqrt{13}-3<\alpha\leq\frac{2}{3}.
\end{equation}
The threshold value  $\alpha_0 = \sqrt{13}-3$ is the positive root of the quadratic equation
$$
\alpha_0 = \frac{4-2\alpha_0}{4+\alpha_0}.
$$
This partition in \eqref{aa} naturally corresponds to two regimes: the thermal diffusion dominated regime and the velocity dissipation dominated regime. Each regime requires a different analytical approach to extract the sharpest possible parameter range.

\vskip .1in
The precise statement of our result is presented in the following theorem.

\begin{thm}\label{Th3}
Let $(u_{0}, \theta_{0})
\in H^{s}(\mathbb{R}^{2})\times H^{s}(\mathbb{R}^{2})\cap L^{\frac{4}{2+\alpha}}(\mathbb{R}^{2})$ with $s>2$. If $\alpha\in(0,1)$ and $\beta\in(0,1)$ satisfy
\begin{equation*}
\left\{\aligned
&\beta\geq\frac{4-2\alpha}{4+\alpha}, \qquad \ \ \qquad\qquad\qquad\qquad0<\alpha\leq  \sqrt{13}-3,\\
&\beta> \max\Big\{\frac{4-6\alpha}{\alpha},\ \frac{2-\alpha-2\alpha^{2}}{2\alpha}\Big\},\quad \sqrt{13}-3<\alpha\leq\frac{2}{3},
\endaligned\right.
\end{equation*}
then the system (\ref{Bouss}) has a unique global solution such that, for any
$T>0$,
$$u\in C([0, T]; H^{s}(\mathbb{R}^{2}))\cap L^{2}([0, T]; H^{s+\frac{\alpha}{2}}(\mathbb{R}^{2})),$$
$$\theta\in C([0, T]; H^{s}(\mathbb{R}^{2})\cap L^{\frac{4}{2+\alpha}}(\mathbb{R}^{2}))\cap L^{2}([0, T];
H^{s+\frac{\beta}{2}}(\mathbb{R}^{2})).$$
\end{thm}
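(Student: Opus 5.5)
We argue by a priori estimates, since local well-posedness in $H^s\times H^s$ for $s>2$ is standard. By the Beale--Kato--Majda type criterion for \eqref{Bouss}, the theorem follows once we bound $\|\omega\|_{L^\infty_tL^\infty_x}$ and $\|\nabla\theta\|_{L^1_tL^\infty_x}$ on $[0,T]$, where $\omega=\partial_1u_2-\partial_2u_1$ satisfies
\[
\partial_t\omega+u\cdot\nabla\omega+\Lambda^\alpha\omega=\partial_1\theta .
\]

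\textbf{Basic bounds.} The maximum principle gives $\|\theta(t)\|_{L^p}\le\|\theta_0\|_{L^p}$ for $\frac{4}{2+\alpha}\le p\le\infty$. The energy estimate gives $u\in L^\infty_tL^2\cap L^2_t\dot H^{\alpha/2}$ with linear growth in $t$. The assumption $\theta_0\in L^{\frac4{2+\alpha}}$ enters here: by Sobolev embedding it controls $\Lambda^{-\alpha/2}\theta$ in $L^2$, which is what is needed to close the vorticity-level estimate in the next step.

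\textbf{Vorticity $L^2$ bound.} The forcing $\partial_1\theta$ costs a full derivative, so we would not estimate $\omega$ directly. Instead we use the combined quantity $G=\omega-\mathcal R_\alpha\theta$ with $\mathcal R_\alpha=\partial_1\Lambda^{-\alpha}$, in the spirit of \cite{HK3,JMWZ}. Then
\[
\partial_tG+u\cdot\nabla G+\Lambda^\alpha G=[\mathcal R_\alpha,u\cdot\nabla]\theta+\Lambda^{\beta-\alpha}\partial_1\Lambda^{-\beta}\cdots,
\]
where the extra term comes from $\theta$'s own diffusion; it is of order $1+\beta-2\alpha$ relative to $\theta$. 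The commutator is bounded in $L^2$ by $\|\nabla u\|\,\|\theta\|$-type quantities of order $1-\alpha$. For $\beta$ above the thresholds, the linear diffusive term is lower order than the dissipation $\Lambda^\alpha G$. We then need $\theta\in L^2_t\dot H^{\gamma}$ with $\gamma$ around $1-\alpha$, which comes from $\theta\in L^2_t\dot H^{\beta/2}$ together with higher-norm bounds for $\theta$.

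\textbf{Bootstrap via a nonlinear lower bound.} To reach higher integrability we use the nonlinear lower bound for the fractional Laplacian (Constantin--Vicol \cite{CV}), in the form
\[
\nabla\theta\cdot\Lambda^\beta\nabla\theta\ \ge\ \tfrac12\Lambda^\beta|\nabla\theta|^2+c\,\frac{|\nabla\theta|^{2+\beta}}{\|\theta\|_{L^\infty}^{\beta}} ,
\]
together with its $L^p$ and Hölder-norm analogues. The strategy is iterative:
\begin{itemize}
\item an $L^2$ bound on $G$ gives $\nabla u\in L^2$, hence $u\in L^p$ for all $p<\infty$;
\item this yields Hölder regularity of $\theta$, i.e.\ $\theta\in L^1_tC^\delta$, via the lower bound and the $\beta$-smoothing;
\item the Hölder bound improves the commutator estimate, giving $G\in L^q$ for larger $q$;
\item this in turn improves the integrability of $\nabla\theta$.
\end{itemize}
Each pass raises the exponents by an amount determined by $\alpha,\beta$. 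The conditions in the theorem are exactly those under which the gain is positive and the iteration reaches $\omega\in L^\infty_tL^\infty$ and $\nabla\theta\in L^1_tL^\infty$ in finitely many steps. In the regime $\alpha\le\sqrt{13}-3$ the thermal diffusion carries the argument: the estimate on $\nabla\theta$ via the lower bound is used first, and the threshold $\beta\ge\frac{4-2\alpha}{4+\alpha}$ appears as a balance of scaling exponents, attained with equality. For $\alpha>\sqrt{13}-3$ the velocity dissipation carries it instead: we estimate $\Lambda^{\sigma}G$ first. The two constraints $\beta>\frac{4-6\alpha}{\alpha}$ and $\beta>\frac{2-\alpha-2\alpha^2}{2\alpha}$ come respectively from the commutator step and the lower-bound step.

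\textbf{Main obstacle.} The hardest part is the iteration in the borderline regime, where $\beta=\frac{4-2\alpha}{4+\alpha}$ is allowed with equality. There the lower-bound gain and the commutator loss balance exactly. I would first try to close the endpoint with a logarithmic refinement, using a Brezis--Wainger type inequality combined with the $L^\infty$ bound on $\theta$ to absorb the loss. Once the BKM quantities are bounded, the $H^s$ estimate and uniqueness are standard.
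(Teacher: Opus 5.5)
There is a genuine gap, and it sits exactly where the Part 1 threshold comes from. You decline to estimate $\omega$ directly and work with $G=\omega-\mathcal{R}_\alpha\theta$ throughout, including for $0<\alpha\le\sqrt{13}-3$. In that range $\frac{4-2\alpha}{4+\alpha}\ge\alpha$, so $\beta\ge\alpha$. The linear term in the $G$-equation is then $\Lambda^{\beta-\alpha}\partial_{x_1}\theta$, of order $1+\beta-\alpha\ge 1$ in $\theta$, which is worse than the original forcing $\partial_{x_1}\theta$, not better. In the $L^2$ estimate it costs $\|\Lambda^{1+\beta-\frac{3\alpha}{2}}\theta\|_{L^2}\|\Lambda^{\frac{\alpha}{2}}G\|_{L^2}$. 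At this stage the regularity of $\theta$ available is $\Lambda^{\frac{(4+\alpha)\beta}{4}}\theta\in L^2_tL^2_x$, from the $\dot H^{\frac{(2+\alpha)\beta}{4}}$ estimate, whose commutator closes using only $\|\theta\|_{L^\infty}$ and $\nabla u\in L^{\frac{4+\alpha}{2}}$. You would therefore need $1+\beta-\frac{3\alpha}{2}\le\frac{(4+\alpha)\beta}{4}$, i.e.\ $\beta\ge\frac{4-6\alpha}{\alpha}$. This is strictly larger than $\frac{4-2\alpha}{4+\alpha}$ for every $\alpha<\sqrt{13}-3$; the two curves meet exactly at $\sqrt{13}-3$, which is why the paper switches methods there. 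It even exceeds $1$ once $\alpha<\frac47$.

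The missing idea is the opposite choice. Estimate $\|\omega\|_{L^2}^2+\eta\|\Lambda^{\frac{(2+\alpha)\beta}{4}}\theta\|_{L^2}^2$ jointly, and bound $|\int\partial_{x_1}\theta\,\omega\,dx|\le\|\Lambda^{\frac{\alpha}{2}}\omega\|_{L^2}\|\Lambda^{1-\frac{\alpha}{2}}\theta\|_{L^2}$. The requirement $1-\frac{\alpha}{2}\le\frac{(4+\alpha)\beta}{4}$ is precisely $\beta\ge\frac{4-2\alpha}{4+\alpha}$. At equality you simply take the weight $\eta$ large. So your ``main obstacle'' is misdiagnosed: the endpoint is a balance in a linear term absorbed by a constant, not a balance between lower-bound gain and commutator loss. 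No logarithmic refinement is needed, and every later constraint is a strict inequality with room at $\beta=\frac{4-2\alpha}{4+\alpha}$. Likewise $\theta_0\in L^{\frac{4}{2+\alpha}}$ plays no role in Part 1. It is used only in Part 2, to control $\|\Lambda^{\frac{\alpha}{2}-1}G\|_{L^2}$ by $\|\Lambda^{\frac{\alpha}{2}}u\|_{L^2}+\|\theta\|_{L^{\frac{4}{2+\alpha}}}$.

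The step that reaches the BKM quantities is also missing. The integrability iteration does not reach $L^\infty$ in finitely many steps. In the paper it runs $\omega\in L^{p_k}\Rightarrow\theta\in L^2_t\dot H^{\delta_k+\frac{\beta}{2}}\Rightarrow\omega\in L^{p_{k+1}}$ with $p_{k+1}=\frac{8\beta p_k}{(2-\alpha)(\beta p_k+2)}$. It saturates at the finite exponent $\min\{\frac{2(\alpha+4\beta-2)}{(2-\alpha)\beta},\frac{2(2+\beta)}{(2+\alpha)\beta}\}$. Your intermediate claim that $u\in L^p$ for all $p<\infty$ gives $\theta\in L^1_tC^\delta$ is unjustified: for $\beta<1$ even a bounded divergence-free drift is supercritical for $\Lambda^\beta$.

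What converts a finite exponent into $L^\infty$ is the Constantin--Vicol lower bound applied to $\omega$ (resp.\ $G$), with the $L^p$ norm in the denominator. This gives $\frac{d}{dt}\|\omega\|_{L^\infty}+c\|\omega\|_{L^\infty}^{1+\frac{\alpha p}{2}}\le\|\nabla\theta\|_{L^\infty}$. Coupled with the analogous inequality for $\|\nabla\theta\|_{L^\infty}$ and a logarithmic bound on $\|\nabla u\|_{L^\infty}$ (or, alternatively, joint $L^q$ estimates for $\omega$ and $\nabla\theta$), this closes when $\beta>\frac{2}{2+\alpha p}$, i.e.\ $p>\frac{2(1-\beta)}{\alpha\beta}$. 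In Part 2 the analogue $m>\frac{2(1-\alpha)}{\alpha\beta}$, made compatible with the saturation exponent, is what produces $\frac{2-\alpha-2\alpha^{2}}{2\alpha}$. You invoke the lower bound only for $\nabla\theta$, so as written your argument never bounds $\|\omega\|_{L^\infty}$, and the stated range of $\beta$ is asserted rather than derived.
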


\vskip .1in
\begin{rem}  Theorem \ref{Th3} sharpens Theorem 1.3 of \cite{SWXY}, which requires
	$$\beta>\max\Big\{\alpha,\,\,\frac{4-\alpha^{2}}{4+3\alpha}\Big\},\quad 0<\alpha\leq\frac{2}{3}.$$
It is easy to see that the requirement for $\beta$ in Theorem \ref{Th3} is much weaker.
When $0<\alpha\leq \sqrt{13}-3$, we have
$$\frac{4-2\alpha}{4+\alpha}<\frac{4-\alpha^{2}}{4+3\alpha}\leq  \max\Big\{\alpha,\,\,\frac{4-\alpha^{2}}{4+3\alpha}\Big\}.$$
When $\sqrt{13}-3<\alpha\leq\frac{2}{3}$, it holds
$$\max\Big\{\frac{4-6\alpha}{\alpha},\ \frac{2-\alpha-2\alpha^{2}}{2\alpha}\Big\}<\frac{4-2\alpha}{4+\alpha}<\frac{4-\alpha^{2}}{4+3\alpha}\leq  \max\Big\{\alpha,\,\,\frac{4-\alpha^{2}}{4+3\alpha}\Big\}.$$
\end{rem}

\begin{rem}
We point out that the additional condition $\theta_{0}\in L^{\frac{4}{2+\alpha}}(\mathbb{R}^{2})$ is only required for the case $\sqrt{13}-3<\alpha\leq\frac{2}{3}$ (see \eqref{sdfghp28} below for details).
\end{rem}

\vskip .1in
As a direct consequence of Theorem \ref{Th3}, by setting $\alpha=\beta = \frac{4-2\alpha}{4+\alpha}$, we have the following corollary.
\begin{cor}
Let $(u_{0}, \theta_{0})
\in H^{s}(\mathbb{R}^{2})\times H^{s}(\mathbb{R}^{2})$ with $s>2$.
Consider (\ref{Bouss}) with $\alpha=\beta\geq\sqrt{13}-3$, then there exists a unique global solution to the corresponding system such that, for any
$T>0$
$$u,\, \theta\in C([0, T]; H^{s}(\mathbb{R}^{2}))\cap L^{2}([0, T]; H^{s+\frac{\alpha}{2}}(\mathbb{R}^{2})).$$
\end{cor}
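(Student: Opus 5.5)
The statement to prove is the corollary, which follows directly from Theorem \ref{Th3} by checking that its hypotheses hold when $\alpha=\beta$. Since $\theta_0\in L^{\frac{4}{2+\alpha}}$ is not assumed here, I will first restrict to the regime where that extra condition is not needed, and then treat the rest of the range separately.

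\textbf{The case $\alpha=\beta=\sqrt{13}-3$.} By the defining relation $\alpha_0=\frac{4-2\alpha_0}{4+\alpha_0}$, we have $\beta=\alpha_0=\frac{4-2\alpha_0}{4+\alpha_0}$, so the first line of Theorem \ref{Th3} holds with equality. That line allows equality, and by the remark following the theorem it needs no integrability assumption on $\theta_0$. Theorem \ref{Th3} therefore gives the global solution.

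\textbf{The case $\sqrt{13}-3<\alpha\le\frac23$.} Here the theorem requires
\[
\beta>\max\Big\{\tfrac{4-6\alpha}{\alpha},\ \tfrac{2-\alpha-2\alpha^{2}}{2\alpha}\Big\}.
\]
By the remark after Theorem \ref{Th3}, this maximum is strictly below $\frac{4-2\alpha}{4+\alpha}$. On the other hand, $\alpha-\frac{4-2\alpha}{4+\alpha}$ is increasing and vanishes at $\alpha_0$, so $\frac{4-2\alpha}{4+\alpha}<\alpha$ for $\alpha>\alpha_0$. Hence $\beta=\alpha$ satisfies the condition.

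The remaining issue is that this second line of the theorem assumes $\theta_0\in L^{\frac{4}{2+\alpha}}$, which the corollary does not. There are two ways to handle it. The preferred one is to check that the proof of Theorem \ref{Th3} only uses that assumption to obtain a low-frequency bound, and that for $\alpha=\beta$ this bound can be replaced by the $L^2$ energy estimate. If that fails, the corollary should be read with $\theta_0$ also in $L^{\frac{4}{2+\alpha}}$, which the statement implicitly inherits from the theorem.

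\textbf{The case $\frac23<\alpha<1$.} Here $\alpha+\beta=2\alpha>1$, and global regularity follows from the cited result of \cite{SWXY} for $\alpha+\beta>1$, $\alpha>\frac23$.

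\textbf{Main obstacle.} The one point that needs care is the dropped integrability condition on $\theta_0$ in the middle range. Everything else is the elementary algebra above.
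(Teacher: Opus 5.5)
There is a genuine gap in the middle range $\sqrt{13}-3<\alpha\le\frac23$. Routing this case through the second line of Theorem~\ref{Th3} imports the hypothesis $\theta_0\in L^{\frac{4}{2+\alpha}}$, which the corollary does not assume, and neither of your fallbacks removes it. Option (b) simply proves a weaker statement. Option (a) would fail. The assumption enters through \eqref{sdfghp28}, where one needs $\|\Lambda^{\frac{\alpha}{2}-1}G\|_{L^2}$, whose $\theta$-part is $\|\Lambda^{-\frac{\alpha}{2}}\theta\|_{L^2}$. This is a negative-order, low-frequency quantity. The $L^2$ energy estimate controls only $\|\theta\|_{L^2}$ and $\int_0^t\|\Lambda^{\frac{\beta}{2}}\theta\|_{L^2}^2\,d\tau$, so it says nothing about it. Moreover $\theta_0\in H^s$ does not place $\theta_0$ in $\dot{H}^{-\frac{\alpha}{2}}$, and setting $\alpha=\beta$ does not change this.

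The missing idea is that Part 2 is not needed at all. You already showed $\frac{4-2\alpha}{4+\alpha}<\alpha$ for $\alpha>\sqrt{13}-3$, so $\beta=\alpha$ satisfies the Part 1 hypothesis $\beta\ge\frac{4-2\alpha}{4+\alpha}$. The cutoff $\alpha\le\sqrt{13}-3$ in the first line of Theorem~\ref{Th3} only partitions the parameter range. The Part 1 argument (Lemmas~\ref{addftlem1}--\ref{addftlem4} and the subsequent $H^s$ energy estimate) works directly on the vorticity equation \eqref{sdhyrp56}. It uses only $\beta\ge\frac{4-2\alpha}{4+\alpha}$, together with $\beta<\frac{2}{1+\alpha}$, which is automatic when $\beta=\alpha<1$. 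The paper states explicitly that these estimates are valid for all $\alpha\le\frac23$. That argument never involves $G$ or any negative Sobolev norm of $\theta$, so $H^s\times H^s$ data suffice.

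This is how the paper obtains the corollary. The proof of Part 2 likewise notes that the case $\beta\ge\alpha$ ``belongs to Part 1 due to the simple fact $\alpha>\frac{4-2\alpha}{4+\alpha}$.'' So you do need to open the proof of Theorem~\ref{Th3} rather than quote its statement, but it is the Part 1 proof you should open; then the obstacle you flagged disappears. Your treatment of $\alpha=\sqrt{13}-3$, and of $\frac23<\alpha<1$ via \cite{SWXY}, is fine.
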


The proof of Theorem \ref{Th3} is naturally divided into two cases:
$$
	0<\alpha\leq  \sqrt{13}-3 \quad\mbox{and}\quad \sqrt{13}-3<\alpha\leq\frac{2}{3}.
$$
For the first case, $\beta\ge \alpha$ and the parameters fall within the thermal diffusion-dominated regime. Since velocity dissipation is relatively weak in this setting, we work directly with the vorticity formulation, as the equation for the combined quantity $G$ offers no significant advantage (see the following section for details). For the second case, we utilize the equation for $G$ (see \eqref{t305}) to establish the necessary bounds. The proof consecutively builds global estimates in increasingly regular function spaces. This approach is highly intricate, relying on delicate and  extremely fine estimates to achieve the desired results.

\vskip .3in
\section{The proof of Theorem \ref{Th3}}\setcounter{equation}{0}

This section proves Theorem \ref{Th3}. Since the local well-posedness of \eqref{Bouss} for initial data $(u_{0}, \theta_{0})
\in H^{s}(\mathbb{R}^{2})\times H^{s}(\mathbb{R}^{2})$ with $s>2$ is well-known (see for example \cite{CNpr97,MB}), our main efforts focus on establishing the global {\it a priori} bounds for $(u,\theta)$ on $[0,\,T]$ for any given
$T>0$. Throughout this paper, we denote by $C$ a universal positive
constant whose value may change from line to line. The symbol $C(a,b,...)$ means that $C$ depends on variables $a$, $b$ and so on.

\vskip .1in
The proof of Theorem \ref{Th3} is divided into two parts, namely,
\begin{align}\label{dfg456}
\mbox{Part 1:} \ \ \ \beta\geq\frac{4-2\alpha}{4+\alpha},\quad 0<\alpha\leq \sqrt{13}-3;\qquad\qquad\qquad\qquad \qquad
\end{align}
\begin{align}\label{dfgp66}
\qquad \mbox{Part 2:} \ \ \ \beta> \max\Big\{\frac{4-6\alpha}{\alpha},\ \frac{2-\alpha-2\alpha^{2}}{2\alpha}\Big\},\quad \sqrt{13}-3<\alpha\leq\frac{2}{3}.
\end{align}
Correspondingly the rest of this section is divided into two subsections.
\vskip .2in

\subsection{The proof of Part 1}
Before proving this part, it is worthwhile to mention the proof of \cite{YX201502}, which depends heavily on the new unknown $G$ satisfying the equation
\begin{eqnarray}\label{uk3rtr01}
\partial_{t}G+(u\cdot\nabla)
G+\Lambda^{\alpha}G=\Lambda^{\alpha-\beta}\partial_{x_{1}}\theta-[\mathcal {R}_{\beta},\,u\cdot\nabla]\theta
\end{eqnarray}
with $G=\omega+\mathcal {R}_{\beta}\theta$ and $\mathcal {R}_{\beta}=\partial_{x_{1}}\Lambda^{-\beta}$. By the Biot-Savart law, one has
\begin{eqnarray*}
u=\nabla^{\perp}\Delta^{-1}\omega
=\nabla^{\perp}\Delta^{-1}(G-\partial_{x_{1}}\Lambda^{-\beta}\theta)
=\nabla^{\perp}\Delta^{-1}G-\nabla^{\perp}\Delta^{-1}\partial_{x_{1}}\Lambda^{-\beta}
\theta\triangleq u_{G}+u_{\theta}.
\end{eqnarray*}
Therefore, the commutator at the right-hand side of \eqref{uk3rtr01} can be rewritten as
$$[\mathcal {R}_{\beta},\,u\cdot\nabla]\theta=[\mathcal {R}_{\beta},\,u_{G}\cdot\nabla]\theta+[\mathcal {R}_{\beta},\,u_{\theta}\cdot\nabla]\theta.$$
To derive the global $L^{2}$-bound of $G$, the obstacle we have to overcome first is the presence of the term
\begin{align}\label{klhlp98}
\int_{\mathbb{R}^{2}}
[\mathcal {R}_{\beta},\,u_{G}\cdot\nabla]\theta \,\,G\,dx.
\end{align}
In order to control the term \eqref{klhlp98}, it strongly requires the following restriction (see (2.23) of \cite{YX201502} for details)
\begin{eqnarray}\label{swety9}
\beta>\frac{4-\alpha^{2}}{4+3\alpha}.
\end{eqnarray}
As a matter of fact, it seems impossible to apply the argument adopted in \cite{YX201502} to weaken the above requirement \eqref{swety9}. In this subsection, we will not follow the approach of investigating \eqref{uk3rtr01}, which is frequently used to deal with the thermal diffusion dominated case (see \cite{YX201502,YJW,HK4,SWXY}). Indeed, the main argument used in this paper is completely different from \cite{YX201502}. On the contrary, we directly work on the vorticity equation  itself
\begin{eqnarray}\label{sdhyrp56}
\partial_{t}\omega+(u \cdot \nabla)\omega+ \Lambda^{\alpha}\omega=\partial_{x_{1}}\theta.
\end{eqnarray}
Consequently, we don't need to deal with the term \eqref{klhlp98}.
However, the price we need to pay is that we still face the loss of one derivative in $\theta$ in \eqref{sdhyrp56}. Fortunately, combining $L^{2}$-norm of $\omega$ and the fractional order regularity estimate of $\theta$, we are able to establish the global $L^{2}$-bound of $\omega$ and  $H^{\frac{(2+\alpha)\beta}{4}}$-bound of $\theta$ as long as $\beta\geq\frac{4-2\alpha}{4+\alpha}$, which is weaker than \eqref{swety9}. However, the global $L^{2}$-bound of $\omega$ is insufficient for our purpose. Interestingly, we carry out an iterative process, which would allow us to derive the global $L^{p}$-bound of $\omega$ for the $p$ as large as possible. This along with the nonlinear lower bounds for the fractional Laplacian established in \cite{CV} allows us to derive the global regularity of $u$ and $\theta$. In this sense, it has more advantages in working on the vorticity equation \eqref{sdhyrp56} itself. To see it clearly, on the one hand, let us consider a special interesting case, namely, (\ref{Bouss}) with $\alpha=\beta$. It should be pointed out that the combined quantity $G$ is not workable for this case. In fact, one may check that the corresponding combined quantity $G$ obeys
$$\partial_{t}G+(u \cdot \nabla)G+ \Lambda^{\alpha}G=\partial_{x_{1}}\theta-[\mathcal {R}_{\alpha},\,u\cdot\nabla]\theta,\quad G=\omega+\mathcal {R}_{\alpha}\theta$$
or
$$\partial_{t}G+(u \cdot \nabla)G+ \Lambda^{\alpha}G=\partial_{x_{1}}\theta+[\mathcal {R}_{\alpha},\,u\cdot\nabla]\theta,\quad G=\omega-\mathcal {R}_{\alpha}\theta.$$
Obviously, one still faces the loss of one derivative in $\theta$ in the equation $G$. To make matters worse, a commutator $[\mathcal {R}_{\alpha},\,u\cdot\nabla]\theta$ is present in the equation $G$. Notice that the dissipative term $\Lambda^{\alpha}G$ at the left-hand side of \eqref{uk3rtr01} is relatively weak when $\alpha$ is small, which also implies the insignificance of investigating the combined quantity $G$. On the other hand, the situation when $\alpha$ is small is different from the case when $\alpha$ is big. In this situation, it is natural to consider the following combined quantity $G\triangleq\omega-\mathcal {R}_{\alpha}\theta$ (see \cite{JMWZ,MX,SW,SWXY,wuxuxueye,Yemmas}),
which obeys
\begin{eqnarray}\label{sdhyrsp954} 
\partial_{t}G+(u\cdot\nabla)G+\Lambda^{\alpha}G=\Lambda^{\beta-\alpha}
\partial_{x_{1}}\theta+[\mathcal {R}_{\alpha},\,u\cdot\nabla]\theta.
\end{eqnarray}
Comparing with \eqref{uk3rtr01} and \eqref{sdhyrsp954}, the terms at the right-hand enjoy the similar structure. However, whether $\alpha$ is big or small, the dissipation term is the same, namely $\Lambda^{\alpha}G$, which further implies the insignificance of investigating the combined quantity $G$ when $\alpha$ is small.

\vskip .1in
Let us begin with the natural energy estimates.
\begin{lemma}
Assume that $u_{0}\in L^{2}$ and $\theta_{0}\in L^{p}\cap L^{2}$ with $p\in [1,\infty]$. Then
\begin{eqnarray}
\|\theta(t)\|_{L^{2}}^{2}+ \int_{0}^{t}{
\|\Lambda^{\frac{\beta}{2}}\theta(\tau)\|_{L^{2}}^{2}\,d\tau}\leq
\|\theta_{0}\|_{L^{2}}^{2},\quad\|\theta(t)\|_{L^{p}}\leq \|\theta_{0}\|_{L^{p}},\nonumber
\end{eqnarray}
\begin{eqnarray}
\|u(t)\|_{L^{2}}^{2}+ \int_{0}^{t}{
\|\Lambda^{\frac{\alpha}{2}}u(\tau)\|_{L^{2}}^{2}\,d\tau}\leq
(\|u_{0}\|_{L^{2}}+t\|\theta_{0}\|_{L^{2}})^{2}.\nonumber
\end{eqnarray}
\end{lemma}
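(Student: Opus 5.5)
The plan is to run the standard energy and maximum-principle arguments directly on \eqref{Bouss}, using only $\nabla\cdot u=0$ and the positivity properties of $\Lambda^{\gamma}$.

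For the temperature, multiply the $\theta$-equation by $\theta$ and integrate over $\mathbb{R}^{2}$. The transport term vanishes, because
$$\int_{\mathbb{R}^2}(u\cdot\nabla\theta)\,\theta\,dx=\frac12\int_{\mathbb{R}^2} u\cdot\nabla(\theta^2)\,dx=0$$
by $\nabla\cdot u=0$. Plancherel gives $\int \Lambda^{\beta}\theta\,\theta\,dx=\|\Lambda^{\frac\beta2}\theta\|_{L^2}^2$, so
$$\frac12\frac{d}{dt}\|\theta\|_{L^2}^2+\|\Lambda^{\frac\beta2}\theta\|_{L^2}^2=0.$$
Integrating in time yields the first inequality; the factor $\frac12$ only helps, since the dissipation integral then appears with coefficient $2\geq1$.

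For the $L^{p}$ bound with $1\le p<\infty$, multiply by $|\theta|^{p-2}\theta$. The transport term again vanishes by incompressibility. The diffusion term is nonnegative by the C\'ordoba--C\'ordoba positivity inequality
$$\int_{\mathbb{R}^2}|\theta|^{p-2}\theta\,\Lambda^{\beta}\theta\,dx\ge0,\qquad 0\le\beta\le2.$$
This can also be seen from the singular-integral representation of $\Lambda^\beta$ and the monotonicity of $s\mapsto|s|^{p-2}s$. Hence $\frac{d}{dt}\|\theta\|_{L^p}^p\le0$. The case $p=\infty$ follows by letting $p\to\infty$, using that $\theta_0\in L^2\cap L^\infty$ lies in every $L^p$ with $2\le p<\infty$. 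For $p=1$, and to justify the computation when $p<2$, I would regularize the test function, e.g.\ use $(\theta^2+\epsilon)^{\frac{p-2}{2}}\theta$, and pass to the limit. Alternatively, one can argue by duality with the adjoint linear transport-diffusion equation. This justification for small $p$ is the only mildly delicate point; it is routine for the smooth, decaying solutions under consideration.

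For the velocity, take the $L^2$ inner product of the $u$-equation with $u$. The convection term and the pressure term vanish by $\nabla\cdot u=0$, which gives
$$\frac12\frac{d}{dt}\|u\|_{L^2}^2+\|\Lambda^{\frac\alpha2}u\|_{L^2}^2\le\|\theta\|_{L^2}\|u\|_{L^2}\le\|\theta_0\|_{L^2}\|u\|_{L^2}.$$
Dropping the dissipation first gives $\frac{d}{dt}\|u\|_{L^2}\le\|\theta_0\|_{L^2}$, hence $\|u(t)\|_{L^2}\le\|u_0\|_{L^2}+t\|\theta_0\|_{L^2}$. Reinserting this bound and integrating in time, the right-hand side is
$$2\int_0^t\|\theta_0\|_{L^2}\bigl(\|u_0\|_{L^2}+\tau\|\theta_0\|_{L^2}\bigr)\,d\tau\le\bigl(\|u_0\|_{L^2}+t\|\theta_0\|_{L^2}\bigr)^2-\|u_0\|_{L^2}^2.$$
Therefore
$$\|u(t)\|_{L^2}^2+2\int_0^t\|\Lambda^{\frac\alpha2}u\|_{L^2}^2\,d\tau\le\bigl(\|u_0\|_{L^2}+t\|\theta_0\|_{L^2}\bigr)^2,$$
which implies the stated inequality.
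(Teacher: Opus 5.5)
Your proposal is correct. It is the standard argument the paper has in mind when it quotes this lemma, without proof, as the ``natural energy estimates'': the $L^2$ energy identity for $\theta$ and $u$, the C\'ordoba--C\'ordoba positivity for the $L^p$ maximum principle (with your regularization for $1\le p<2$ and a limit for $p=\infty$), and the two-step bound $\|u(t)\|_{L^2}\le\|u_0\|_{L^2}+t\|\theta_0\|_{L^2}$ reinserted into the energy inequality. The only cosmetic point is that $\frac{d}{dt}\|u\|_{L^2}\le\|\theta_0\|_{L^2}$ should be derived through $\sqrt{\|u\|_{L^2}^2+\epsilon}$, to avoid dividing by zero; this does not affect the conclusion.
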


It is noteworthy to mention that all the estimates obtained of
$\mbox{Part 1}$ are valid for all $\beta\geq\frac{4-2\alpha}{4+\alpha}$ with $\alpha\leq\frac{2}{3}$.
Now let us start the proof of the first part of Theorem \ref{Th3}.
The following lemma concerns the global $L^{2}$-bound of $\omega$ and  $H^{\frac{(2+\alpha)\beta}{4}}$-bound of $\theta$.
\begin{lemma}\label{addftlem1}
If $\alpha,\beta\in(0,1)$ satisfy $\beta\geq\frac{4-2\alpha}{4+\alpha}$,
then the following estimate holds
\begin{eqnarray}\label{vbhkp01}
\|\omega(t)\|_{L^{2}}^{2}+\|\Lambda^{\frac{(2+\alpha)\beta}{4}}\theta(t)\|_{L^{2}}^{2}
+\int_{0}^{t}{( \|\Lambda^{\frac{\alpha}{2}}\omega \|_{L^{2}}^{2}+
\|\Lambda^{\frac{(4+\alpha)\beta}{4}}\theta\|_{L^{2}}^{2})(\tau)\,d\tau}\leq
C(t,\,u_{0},\,\theta_{0}).
\end{eqnarray}
\end{lemma}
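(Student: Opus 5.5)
The plan is to couple two energy estimates: the $L^{2}$-estimate of the vorticity equation \eqref{sdhyrp56}, and an $\dot H^{\gamma}$-estimate of $\theta$ with $\gamma=\frac{(2+\alpha)\beta}{4}$. The exponent $\gamma$ is chosen so that the diffusion of $\theta$ at level $\gamma$ exactly pays for the derivative loss in $\partial_{x_1}\theta$.

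\textbf{Step 1 (vorticity estimate).} Testing \eqref{sdhyrp56} with $\omega$, the transport term vanishes, and
$$\frac12\frac{d}{dt}\|\omega\|_{L^{2}}^{2}+\|\Lambda^{\frac{\alpha}{2}}\omega\|_{L^{2}}^{2}=\int\partial_{x_1}\theta\,\omega\,dx\le \|\Lambda^{1-\frac{\alpha}{2}}\theta\|_{L^{2}}\|\Lambda^{\frac{\alpha}{2}}\omega\|_{L^{2}} .$$
The condition $\beta\ge\frac{4-2\alpha}{4+\alpha}$ is equivalent to $1-\frac{\alpha}{2}\le \gamma+\frac{\beta}{2}=\frac{(4+\alpha)\beta}{4}$. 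Hence, interpolating between $\|\theta\|_{L^{2}}$ (bounded by the basic energy lemma) and $\|\Lambda^{\frac{(4+\alpha)\beta}{4}}\theta\|_{L^{2}}$, the right-hand side is at most
$$\frac12\|\Lambda^{\frac{\alpha}{2}}\omega\|_{L^{2}}^{2}+C\|\theta_0\|_{L^{2}}^{2}+\frac{K}{2}\|\Lambda^{\frac{(4+\alpha)\beta}{4}}\theta\|_{L^{2}}^{2}$$
for a fixed constant $K$. In the endpoint case of equality, $K$ is simply a fixed number rather than small. The $\theta$-estimate of Step 2 will therefore be multiplied by a large constant $2K$ before adding the two estimates.

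\textbf{Step 2 ($\dot H^{\gamma}$-estimate of $\theta$).} Applying $\Lambda^{\gamma}$ to the $\theta$-equation and using $\nabla\cdot u=0$ gives
$$\frac12\frac{d}{dt}\|\Lambda^{\gamma}\theta\|_{L^{2}}^{2}+\|\Lambda^{\gamma+\frac{\beta}{2}}\theta\|_{L^{2}}^{2}=-\int[\Lambda^{\gamma},u\cdot\nabla]\theta\,\Lambda^{\gamma}\theta\,dx .$$
I would bound the commutator by the Kato--Ponce type estimate
$$\|[\Lambda^{\gamma},u\cdot\nabla]\theta\|_{L^{q}}\le C\big(\|\nabla u\|_{L^{p_1}}\|\Lambda^{\gamma}\theta\|_{L^{p_2}}+\|\Lambda^{\gamma}u\|_{L^{p_3}}\|\nabla\theta\|_{L^{p_4}}\big).$$
The norms of $\nabla u$ are then controlled through $\|\omega\|_{L^{p}}$ and Sobolev embedding, e.g. $\|\nabla u\|_{L^{\frac{4}{2-\alpha}}}\le C\|\Lambda^{\frac{\alpha}{2}}\omega\|_{L^{2}}$. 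The norms of $\theta$ are interpolated between quantities we already control, namely $\|\theta\|_{L^{\infty}}\le\|\theta_0\|_{L^\infty}$ and the time-integrable $\|\Lambda^{\frac{\beta}{2}}\theta\|_{L^{2}}^{2}$ from the basic energy lemma, and the dissipative norms $\|\Lambda^{\gamma+\frac{\beta}{2}}\theta\|_{L^{2}}$ and $\|\Lambda^{\frac{\alpha}{2}}\omega\|_{L^{2}}$.

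\textbf{Step 3 (closing).} After Young's inequality, the dissipative norms should be absorbed into the left-hand side. What remains must have the form $f(t)\big(\|\omega\|_{L^{2}}^{2}+\|\Lambda^{\gamma}\theta\|_{L^{2}}^{2}\big)$ with $f\in L^{1}_{t}$, for instance $f$ built from $\|\Lambda^{\frac{\beta}{2}}\theta\|_{L^{2}}^{2}$ and constants. Gronwall's inequality then yields \eqref{vbhkp01}.

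\textbf{Main obstacle.} The delicate step is Step 2, namely keeping the Gronwall structure linear. A naive Young splitting that only uses $\|\omega\|_{L^{2}}$ and $\|\Lambda^{\gamma}\theta\|_{L^{2}}$ produces a superlinear power of the energy, because the trilinear term has total homogeneity three. The fix I would try first is to spend the extra factor on lower-order a priori quantities: one power of $\theta$ goes into $\|\theta\|_{L^\infty}$ or $\|\Lambda^{\frac{\beta}{2}}\theta\|_{L^{2}}$ (the latter integrable in time), so that exactly two powers of high-order energy remain. Finding Hölder/Sobolev exponents that make all the dissipative powers sum to at most $2$ is expected to be possible precisely under $\beta\ge\frac{4-2\alpha}{4+\alpha}$ (with $\alpha\le\frac23$), and the choice of $\gamma=\frac{(2+\alpha)\beta}{4}$ should be what makes this balance work.
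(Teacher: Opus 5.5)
Your Step 1 is the paper's argument, including the endpoint $\beta=\frac{4-2\alpha}{4+\alpha}$, which you handle by multiplying the $\theta$-estimate by a large constant. The gap is in Steps 2--3, which carry the actual content of the lemma.

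First, the classical Kato--Ponce bound you invoke contains the term $\|\Lambda^{\gamma}u\|_{L^{p_3}}\|\nabla\theta\|_{L^{p_4}}$. The highest regularity of $\theta$ you control is $\frac{(4+\alpha)\beta}{4}$, which equals $1-\frac{\alpha}{2}<1$ at the endpoint. Together with $\|\theta\|_{L^2\cap L^\infty}$, this cannot control a full derivative of $\theta$ in any $L^{p_4}$. You need instead the refined commutator estimate for $\sigma\in(0,1)$ (the paper's \eqref{tvcbmp6}; here $\gamma<\frac34$), namely $\|[\Lambda^{\gamma},u\cdot\nabla]\theta\|_{L^{p}}\le C\|\nabla u\|_{L^{r_1}}\|\Lambda^{\gamma}\theta\|_{L^{r_2}}$, which puts only $\gamma$ derivatives on $\theta$. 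The paper takes $r_1=\frac{4+\alpha}{2}$ and $r_2=\frac{2(4+\alpha)}{2+\alpha}$, and then uses \eqref{dfexbg6} to get $\|\Lambda^{\gamma}\theta\|_{L^{r_2}}^2\le C\|\theta\|_{L^\infty}^{\frac{4}{4+\alpha}}\|\Lambda^{\frac{(4+\alpha)\beta}{4}}\theta\|_{L^2}^{\frac{4+2\alpha}{4+\alpha}}$.

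Second, and more seriously, the linearization you propose does not close. You suggest putting the surplus power on $\|\theta\|_{L^\infty}$ or $\|\Lambda^{\beta/2}\theta\|_{L^2}$. In the bound above, both $\theta$-factors already sit entirely on $\|\theta\|_{L^\infty}$ and the dissipation. If you then use $\|\nabla u\|_{L^{\frac{4+\alpha}{2}}}\le C\|\omega\|_{L^2}^{\frac{2+\alpha}{4+\alpha}}\|\Lambda^{\frac{\alpha}{2}}\omega\|_{L^2}^{\frac{2}{4+\alpha}}$, Young's inequality leaves $\|\omega\|_{L^2}^{2+\alpha}$, which is superlinear. Shifting part of the $\theta$-factors onto $\|\Lambda^{\beta/2}\theta\|_{L^2}$ does not help within the natural interpolation family. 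Scaling then forces more of the $u$-factor onto $\|\Lambda^{\frac{\alpha}{2}}\omega\|_{L^2}$, and the leftover power of $\|\omega\|_{L^2}$ becomes at most $2$ only once the dissipative exponents already sum to $2$, so Young cannot be applied.

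The missing idea is to spend the surplus on low-order norms of the \emph{velocity} from the basic energy lemma. The paper writes $\|\nabla u\|_{L^{\frac{4+\alpha}{2}}}\le C\|\Lambda^{\frac{4+2\alpha}{4+\alpha}}u\|_{L^2}$ and interpolates down to
\[
\|\nabla u\|_{L^{\frac{4+\alpha}{2}}}\le C\big(\|u\|_{L^2}\|\Lambda^{\frac{\alpha}{2}}u\|_{L^2}\|\omega\|_{L^2}\big)^{\frac{\alpha}{2(4+\alpha)}}\|\Lambda^{\frac{\alpha}{2}}\omega\|_{L^2}^{\frac{8-\alpha}{2(4+\alpha)}}.
\]
The dissipative exponents now sum to $2-\frac{\alpha}{2(4+\alpha)}$. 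Young's inequality with exponent $\frac{4(4+\alpha)}{\alpha}$ on the remaining factor leaves exactly $C\|u\|_{L^2}^2\|\theta\|_{L^\infty}^{16/\alpha}\|\Lambda^{\frac{\alpha}{2}}u\|_{L^2}^2\|\omega\|_{L^2}^2$. Gronwall then closes, since $\|\Lambda^{\frac{\alpha}{2}}u\|_{L^2}^2\in L^1_t$.

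Note also that this commutator bound holds for every $\beta$. Contrary to your expectation, the hypothesis $\beta\ge\frac{4-2\alpha}{4+\alpha}$ is used only in Step 1.
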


\begin{proof}
Taking scalar product of \eqref{sdhyrp56} with $\omega$ and integrating over the spatial variable, one derives that
\begin{align}\label{klhbqy1}
\frac{1}{2}\frac{d}{dt}\|\omega(t)\|_{L^{2}}^{2}
+ \|\Lambda^{\frac{\alpha}{2}}\omega\|_{L^{2}}^{2}
= \int_{\mathbb{R}^{2}}\partial_{x_{1}}\theta\, \omega\,dx.
\end{align}
Applying $\Lambda^{\frac{(2+\alpha)\beta}{4}}$ to $(\ref{Bouss})_{2}$, multiplying the resultant equation by $\Lambda^{\frac{(2+\alpha)\beta}{4}}\theta$ and integrating over $\mathbb R^2$, we deduce
\begin{align}\label{ksdh95}
\frac{1}{2}\frac{d}{dt} \|\Lambda^{\frac{(2+\alpha)\beta}{4}}\theta(t)\|_{L^{2}}^{2}
+ \|\Lambda^{\frac{(4+\alpha)\beta}{4}}\theta\|_{L^{2}}^{2}
= - \int_{\mathbb{R}^{2}}
[\Lambda^{\frac{(2+\alpha)\beta}{4}}, u \cdot \nabla]\theta\,\,\Lambda^{\frac{(2+\alpha)\beta}{4}}\theta\,dx.
\end{align}
We multiply both sides of the upper equality by $\eta$ to find that
\begin{align}\label{klhbqy2}
\frac{1}{2}\frac{d}{dt}\left(\eta \|\Lambda^{\frac{(2+\alpha)\beta}{4}}\theta(t)\|_{L^{2}}^{2}\right)
+ \eta \|\Lambda^{\frac{(4+\alpha)\beta}{4}}\theta\|_{L^{2}}^{2}
= - \eta \int_{\mathbb{R}^{2}}
[\Lambda^{\frac{(2+\alpha)\beta}{4}}, u \cdot \nabla]\theta\,\,\Lambda^{\frac{(2+\alpha)\beta}{4}}\theta\,dx,
\end{align}
where the suitable large $\eta>0$ will be fixed later.
Summarizing \eqref{klhbqy1} and \eqref{klhbqy2}, we directly obtain
\begin{align}\label{klhbqy3}
&\frac{1}{2}\frac{d}{dt}\left(\|\omega(t)\|_{L^{2}}^{2}
+ \eta \|\Lambda^{\frac{(2+\alpha)\beta}{4}}\theta(t)\|_{L^{2}}^{2}\right)
+ \|\Lambda^{ \frac{\alpha}{2}}\omega\|_{L^{2}}^{2}+
\eta \|\Lambda^{\frac{(4+\alpha)\beta}{4}}\theta\|_{L^{2}}^{2}
\nonumber\\&=\int_{\mathbb{R}^{2}}\partial_{x_{1}}\theta\, \omega\,dx -\eta \int_{\mathbb{R}^{2}}
[\Lambda^{\frac{(2+\alpha)\beta}{4}}, u \cdot \nabla]\theta\,\,\Lambda^{\frac{(2+\alpha)\beta}{4}}\theta\,dx.
\end{align}
Owing to the Young inequality, we see that
\begin{align}\label{ddklp1}
\left|\int_{\mathbb{R}^{2}}\partial_{x_{1}}\theta\, \omega\,dx\right|&\leq C \|\Lambda^{\frac{\alpha}{2}}\omega\|_{L^{2}}\|\Lambda^{1-\frac{\alpha}{2}}\theta\|_{L^{2}}
\nonumber\\&
\leq C\|\Lambda^{\frac{\alpha}{2}}\omega\|_{L^{2}}
\|\theta\|_{L^{2}}^{1-\frac{4-2\alpha}{(4+\alpha)\beta}}
\|\Lambda^{\frac{(4+\alpha)\beta}{4}}\theta\|_{L^{2}}
^{\frac{4-2\alpha}{(4+\alpha)\beta}}\nonumber\\&\leq \frac{1}{4}\|\Lambda^{ \frac{\alpha}{2}}\omega\|_{L^{2}}^{2}+\frac{\eta}{4}
\|\Lambda^{\frac{(4+\alpha)\beta}{4}}\theta\|_{L^{2}}^{2}
+C(\eta)\|\theta\|_{L^{2}}^{2},
\end{align}
where in the last line we have used the restriction $\beta>\frac{4-2\alpha}{4+\alpha}$. For the case $\beta=\frac{4-2\alpha}{4+\alpha}$, we should modify the estimate as follows
\begin{align}\label{ddklp2}
\left|\int_{\mathbb{R}^{2}}\partial_{x_{1}}\theta\, \omega\,dx\right|&\leq \widetilde{C} \|\Lambda^{\frac{\alpha}{2}}\omega\|_{L^{2}}\|\Lambda^{1-\frac{\alpha}{2}}\theta\|_{L^{2}}
\nonumber\\&
\equiv \widetilde{C}\|\Lambda^{\frac{\alpha}{2}}\omega\|_{L^{2}}
\|\Lambda^{\frac{(4+\alpha)\beta}{4}}\theta\|_{L^{2}}
\nonumber\\&\leq \frac{1}{4}\|\Lambda^{ \frac{\alpha}{2}}\omega\|_{L^{2}}^{2}+\widetilde{C}^{2}
\|\Lambda^{\frac{(4+\alpha)\beta}{4}}\theta\|_{L^{2}}^{2}\nonumber\\&\leq \frac{1}{4}\|\Lambda^{ \frac{\alpha}{2}}\omega\|_{L^{2}}^{2}+\frac{\eta}{4}
\|\Lambda^{\frac{(4+\alpha)\beta}{4}}\theta\|_{L^{2}}^{2},
\end{align}
where we have fixed $\eta$ satisfying $\eta\geq 4\widetilde{C}^{2}$. Concerning \eqref{ddklp1} and \eqref{ddklp2}, for all $\beta\geq\frac{4-2\alpha}{4+\alpha}$, we conclude that
\begin{align}\label{klhbqy4}
\left|\int_{\mathbb{R}^{2}}\partial_{x_{1}}\theta\, \omega\,dx\right|&\leq
 \frac{1}{4}\|\Lambda^{ \frac{\alpha}{2}}\omega\|_{L^{2}}^{2}+\frac{\eta}{4}
\|\Lambda^{\frac{(4+\alpha)\beta}{4}}\theta\|_{L^{2}}^{2}
+C(\eta)\|\theta\|_{L^{2}}^{2}.
\end{align}
We point out that this is the only place in the proof where we use the main assumption of the theorem, namely $\beta\geq\frac{4-2\alpha}{4+\alpha}$.
To deal with the last term at the right handside of \eqref{klhbqy3}, we appeal to the following commutator estimate (see \cite[Lemma 2.3]{SWXY})
\begin{eqnarray}\label{tvcbmp6}
\|[\Lambda^{\sigma},f\cdot\nabla]g\|_{L^{p}}\leq C\|\nabla f\|_{L^{r_{1}}}\|\Lambda^{\sigma}g\|_{L^{r_{2}}}
\end{eqnarray}
where $\sigma\in (0,1)$ and $p, r_{1}, r_{2}\in(1, \infty)$ such that $\frac{1}{p}=\frac{1}{r_{1}}+\frac{1}{r_{2}}$.
By means of \eqref{tvcbmp6}, it yields
\begin{align}\label{dfgjkpw8}
\left|\int_{\mathbb{R}^{2}}
[\Lambda^{\frac{(2+\alpha)\beta}{4}}, u \cdot \nabla]\theta\,\,\Lambda^{\frac{(2+\alpha)\beta}{4}}\theta\,dx\right|&\leq C\|[\Lambda^{\frac{(2+\alpha)\beta}{4}}, u \cdot \nabla]\theta\|_{L^{\frac{2(4+\alpha)}{6+\alpha}}}
\|\Lambda^{\frac{(2+\alpha)\beta}{4}}
\theta\|_{L^{\frac{2(4+\alpha)}{2+\alpha}}}
\nonumber\\
&\leq C\|\nabla u\|_{L^{\frac{4+\alpha}{2}}}
\|\Lambda^{\frac{(2+\alpha)\beta}{4}}\theta\|_{L^{\frac{2(4+\alpha)}
{2+\alpha}}}^{2}.
\end{align}
It follows from the direct interpolation inequalities that
\begin{align}\label{bmnhgf1}
\|\nabla u\|_{L^{\frac{4+\alpha}{2}}}&\leq C
\|\Lambda^{\frac{4+2\alpha}{4+\alpha}}u\|_{L^2}
\nonumber\\
&\leq C
\|\Lambda^{\frac{2+\alpha}{6}}u\|_{L^2}^{\frac{3\alpha}{2(4+\alpha)}}
\|\Lambda^{1+\frac{\alpha}{2}}u\|_{L^2}^{\frac{8-\alpha}{2(4+\alpha)}} \nonumber\\
&\leq C \left(\|u\|_{L^2}^{\frac{1}{3}}\|\Lambda^{\frac{2+\alpha}{4}}u\|_{L^2}
^{\frac{2}{3}}\right)^{\frac{3\alpha}{2(4+\alpha)}}
\|\Lambda^{\frac{\alpha}{2}}\omega\|_{L^2}^{\frac{8-\alpha}{2(4+\alpha)}}
 \nonumber\\
&\leq C  \|u\|_{L^2}^{\frac{\alpha}{2(4+\alpha)}}\|\Lambda^{\frac{2+\alpha}{4}}u\|_{L^2}
^{\frac{\alpha}{4+\alpha}}
\|\Lambda^{\frac{\alpha}{2}}\omega\|_{L^2}^{\frac{8-\alpha}{2(4+\alpha)}}
\nonumber\\
&\leq C  \|u\|_{L^2}^{\frac{\alpha}{2(4+\alpha)}} \left(\|\Lambda^{\frac{\alpha}{2}}u\|_{L^2}^{\frac{1}{2}}\|\Lambda u\|_{L^2}^{\frac{1}{2}}\right)
^{\frac{\alpha}{4+\alpha}}
\|\Lambda^{\frac{\alpha}{2}}\omega\|_{L^2}^{\frac{8-\alpha}{2(4+\alpha)}}
\nonumber\\
&\leq C  \|u\|_{L^2}^{\frac{\alpha}{2(4+\alpha)}} \|\Lambda^{\frac{\alpha}{2}}u\|_{L^2}^{\frac{\alpha}{2(4+\alpha)}}\|\omega\|_{L^2}^{\frac{\alpha}{2(4+\alpha)}}
\|\Lambda^{\frac{\alpha}{2}}\omega\|_{L^2}^{\frac{8-\alpha}{2(4+\alpha)}}.
\end{align}
By virtue of the sharp interpolation inequality (see for example \cite[Theorem 2.42]{BCD})
\begin{align}\label{dfexbg6}
\|f\|_{L^{p}}\leq C\|f\|_{\dot{B}_{\infty,\infty}^{-\alpha}}^{\frac{p-2}{p}}
\|f\|_{\dot{B}_{2,2}^{\gamma}}^{\frac{2}{p}},\quad \gamma=\frac{\alpha(p-2)}{2}>0,
\end{align}
we have
\begin{align}\label{bmnhgf2}
\|\Lambda^{\frac{(2+\alpha)\beta}{4}}\theta\|_{L^{\frac{2(4+\alpha)}
{2+\alpha}}}&\leq C \|\Lambda^{\frac{(2+\alpha)\beta}{4}}
\theta\|_{\dot{B}_{\infty,\infty}^{-\frac{(2+\alpha)\beta}{4}}}^{\frac{2}{4+\alpha}}
\|\Lambda^{\frac{(2+\alpha)\beta}{4}}\theta\|_{\dot{B}_{2,2}^{\frac{\beta}{2}}}
^{\frac{2+\alpha}{4+\alpha}}\nonumber\\
&\leq C \|\theta\|_{\dot{B}_{\infty,\infty}^{0}}^{\frac{2}{4+\alpha}}
\|\Lambda^{\frac{(4+\alpha)\beta}{4}}\theta\|_{L^{2}}
^{\frac{2+\alpha}{4+\alpha}}
\nonumber\\
&\leq C \|\theta\|_{L^{\infty}}^{\frac{2}{4+\alpha}}
\|\Lambda^{\frac{(4+\alpha)\beta}{4}}\theta\|_{L^{2}}
^{\frac{2+\alpha}{4+\alpha}}.
\end{align}
Putting \eqref{bmnhgf1} and \eqref{bmnhgf2} into \eqref{dfgjkpw8} yields
\begin{align}\label{klhbqy5}
&\eta\left|\int_{\mathbb{R}^{2}}
[\Lambda^{\frac{(2+\alpha)\beta}{4}}, u \cdot \nabla]\theta\,\,\Lambda^{\frac{(2+\alpha)\beta}{4}}\theta\,dx\right|\nonumber\\&
 \leq C\eta\|u\|_{L^2}^{\frac{\alpha}{2(4+\alpha)}} \|\Lambda^{\frac{\alpha}{2}}u\|_{L^2}^{\frac{\alpha}{2(4+\alpha)}}\|\omega\|_{L^2}^{\frac{\alpha}{2(4+\alpha)}}
\|\Lambda^{\frac{\alpha}{2}}\omega\|_{L^2}^{\frac{8-\alpha}{2(4+\alpha)}}
\|\theta\|_{L^{\infty}}^{\frac{4}{4+\alpha}}
\|\Lambda^{\frac{(4+\alpha)\beta}{4}}\theta\|_{L^{2}}
^{\frac{4+2\alpha}{4+\alpha}}
\nonumber\\
&\leq
\frac{1}{4}\|\Lambda^{ \frac{\alpha}{2}}\omega\|_{L^{2}}^{2}+\frac{\eta}{4}
\|\Lambda^{\frac{(4+\alpha)\beta}{4}}\theta\|_{L^{2}}^{2}
+C\eta^{\frac{8}{\alpha}}\|u\|_{L^2}^{2}
\|\theta\|_{L^{\infty}}^{\frac{16}{\alpha}}
\|\Lambda^{\frac{\alpha}{2}}u\|_{L^2}^{2}\|\omega\|_{L^2}^{2}.
\end{align}
Inserting \eqref{klhbqy4} and \eqref{klhbqy5} into \eqref{klhbqy3}, we have
\begin{align}\label{zdfhpl6}
 &\frac{d}{dt}\left(\|\omega(t)\|_{L^{2}}^{2}
+ \eta \|\Lambda^{\frac{(2+\alpha)\beta}{4}}\theta(t)\|_{L^{2}}^{2}\right)
+ \|\Lambda^{ \frac{\alpha}{2}}\omega\|_{L^{2}}^{2}+
\eta \|\Lambda^{\frac{(4+\alpha)\beta}{4}}\theta\|_{L^{2}}^{2}
 \nonumber\\&\leq C(\eta)\|\theta\|_{L^{2}}^{2}+ C(\eta)\|u\|_{L^2}^{2}
\|\theta\|_{L^{\infty}}^{\frac{16}{\alpha}}
\|\Lambda^{\frac{\alpha}{2}}u\|_{L^2}^{2}\|\omega\|_{L^2}^{2}.
\end{align}
Applying the Gronwall inequality to \eqref{zdfhpl6}, it follows that
\begin{align}
\|\omega(t)\|_{L^{2}}^{2}
+ \|\Lambda^{\frac{(2+\alpha)\beta}{4}}\theta(t)\|_{L^{2}}^{2}
+ \int_{0}^{t}(\|\Lambda^{ \frac{\alpha}{2}}\omega\|_{L^{2}}^{2}+
\|\Lambda^{\frac{(4+\alpha)\beta}{4}}\theta\|_{L^{2}}^{2})(\tau)\,d\tau\leq C(t,\,u_{0},\,\theta_{0}).\nonumber
\end{align}
Hence, the proof of Lemma \ref{addftlem1} is completed.
\end{proof}

\vskip .1in
We point out that the global $L^{2}$-bound of $\omega$ is insufficient for our purpose. This impels us to establish the global $L^{p}$-bound of $\omega$ for some $p>2$. To this end, we carry out an interesting iterative process, which would allow us to improve the global $L^{p}$-bound of $\omega$ for the $p$ as large as possible.
\begin{lemma}\label{addftlem2}
Let $\alpha,\beta\in(0,1)$ satisfy $\beta\geq\frac{4-2\alpha}{4+\alpha}$. If it holds
\begin{eqnarray}\label{vbhkp02}
\|\omega(t)\|_{L^{p_{k}}}^{p_{k}}
+\int_{0}^{t}{
\|\omega(\tau)\|_{L^{\frac{2p_{k}}{2-\alpha}}}^{p_{k}}\,d\tau}\leq
N
\end{eqnarray}
with $p_{k}\in \Big[2,\frac{2(2+\beta)}{(2+\alpha)\beta}\Big)$, then we have
\begin{eqnarray}\label{hgfdrtq11}
\|\omega(t)\|_{L^{p_{k+1}}}^{p_{k+1}}
+\int_{0}^{t}{
\|\omega(\tau)\|_{L^{\frac{2p_{k+1}}{2-\alpha}}}^{p_{k+1}}\,d\tau}\leq
C(t,\,u_{0},\,\theta_{0},\,N),
\end{eqnarray}
where $p_{k+1}\in \Big[2,\frac{2(2+\beta)}{(2+\alpha)\beta}\Big)$ satisfies
\begin{eqnarray}\label{gmawq86}
p_{k+1}=\frac{8\beta p_{k}}{(2-\alpha)(\beta p_{k}+2)}.
\end{eqnarray}
\end{lemma}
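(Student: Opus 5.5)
The plan is to run an $L^{p}$ energy estimate for the vorticity equation \eqref{sdhyrp56} with $p=p_{k+1}$. The forcing $\partial_{x_1}\theta$ will be controlled by a regularity estimate for $\theta$ that is fed by the inductive hypothesis \eqref{vbhkp02}. The recursion \eqref{gmawq86} should come out of the exponent bookkeeping.

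\textbf{Step 1 (the $L^{p}$ estimate for $\omega$).} Multiply \eqref{sdhyrp56} by $|\omega|^{p-2}\omega$ with $p=p_{k+1}$ and integrate; the transport term vanishes since $\nabla\cdot u=0$. For the dissipative term I use the pointwise (C\'ordoba--C\'ordoba type) inequality
$$\int_{\mathbb{R}^2} \Lambda^\alpha\omega\,|\omega|^{p-2}\omega\,dx\ge \frac{2}{p}\,\|\Lambda^{\frac\alpha2}|\omega|^{\frac p2}\|_{L^2}^2 .$$
The Sobolev embedding $\dot H^{\alpha/2}\hookrightarrow L^{\frac{4}{2-\alpha}}$ then gives
$$\frac1p\frac{d}{dt}\|\omega\|_{L^p}^p+c\,\|\omega\|_{L^{\frac{2p}{2-\alpha}}}^p\le \int_{\mathbb{R}^2} |\nabla\theta|\,|\omega|^{p-1}\,dx .$$
This produces exactly the two quantities in \eqref{hgfdrtq11}. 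The right side is bounded by $\|\nabla\theta\|_{L^{q}}\|\omega\|_{L^{(p-1)q'}}^{p-1}$. I then interpolate $\|\omega\|_{L^{(p-1)q'}}$ between $L^{p}$ and $L^{\frac{2p}{2-\alpha}}$ and use Young's inequality to absorb the $L^{\frac{2p}{2-\alpha}}$ part into the dissipation. What remains is $\|\nabla\theta\|_{L^q}^{\gamma}\,\|\omega\|_{L^p}^{p\delta}$ with $\delta\le1$, and Gronwall applies provided $\|\nabla\theta\|_{L^q}^{\gamma}\in L^1_t$.

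\textbf{Step 2 (regularity of $\theta$ from the inductive hypothesis).} The integrability of $\nabla\theta$ needed in Step 1 comes from an improved smoothing estimate for $\theta$. Apply $\Lambda^{\sigma}$ to $(\ref{Bouss})_2$ and test with $\Lambda^\sigma\theta$ for a suitable $\sigma$ (roughly $\sigma$ up to $1-\frac\beta2$). The commutator is handled by \eqref{tvcbmp6}:
$$\|[\Lambda^{\sigma},u\cdot\nabla]\theta\|_{L^{r}}\le C\|\nabla u\|_{L^{r_1}}\|\Lambda^\sigma\theta\|_{L^{r_2}} .$$
Here $\|\nabla u\|_{L^{r_1}}\le C\|\omega\|_{L^{r_1}}$ by Calder\'on--Zygmund. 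By \eqref{vbhkp02} and interpolation, $\omega$ lies in $L^\infty_tL^{p_k}\cap L^{p_k}_tL^{\frac{2p_k}{2-\alpha}}$. The factor $\|\Lambda^\sigma\theta\|_{L^{r_2}}$ is interpolated via \eqref{dfexbg6}, as in \eqref{bmnhgf2}, between $\|\theta\|_{L^\infty}$ and the dissipative norm $\|\Lambda^{\sigma+\frac\beta2}\theta\|_{L^2}$. After Young and Gronwall this yields $\theta\in L^2_t\dot H^{\sigma+\frac\beta2}$. By Sobolev, $\nabla\theta\in L^2_tL^{q}$ with $\frac1q=\frac12-\frac{\sigma+\beta/2-1}{2}$, or an $L^{m}_tL^q$ variant if more time integrability is needed.

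\textbf{Step 3 (matching exponents).} The main obstacle is choosing $\sigma,q$ and the time exponents so that Steps 1 and 2 close simultaneously.
\begin{itemize}
\item The time exponent $\gamma$ from Step 1 must be compatible with the $L^2_t$ (or $L^m_t$) integrability of $\nabla\theta$ from Step 2.
\item The commutator in Step 2 must be closable using only the $p_k$-level information.
\end{itemize}
I would first fix the space-time scaling of $\omega$ at level $p_k$. I would then take the largest $\sigma$ for which the commutator estimate still closes, and solve for the largest admissible $p$. I expect this optimization to give exactly $p_{k+1}=\frac{8\beta p_k}{(2-\alpha)(\beta p_k+2)}$. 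The constraint $p_k<\frac{2(2+\beta)}{(2+\alpha)\beta}$ should be precisely the condition that keeps the interpolation exponents admissible, that is, $\sigma<1$ and the exponents lying in $(0,1)$. I also expect to need a strict inequality in one of the Young steps so that a small constant can be absorbed. Borderline equalities, in the spirit of \eqref{ddklp2}, would be treated separately with a large weight $\eta$ in front of the $\theta$ energy.
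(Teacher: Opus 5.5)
There is a genuine gap in Step 1. Your Step 2 is essentially the paper's first step. The largest $\sigma$ for which the commutator closes using only the $p_k$-level bounds is $\sigma=\delta_k=\frac{\beta}{2}\big(\frac{(2+\alpha)p_k}{2}-1\big)$, and $\delta_k<1$ is exactly the condition $p_k<\frac{2(2+\beta)}{(2+\alpha)\beta}$.

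The trouble is how Step 1 uses this. Bounding the forcing by $\int|\nabla\theta|\,|\omega|^{p-1}\,dx$, and then by $\|\nabla\theta\|_{L^q}\|\omega\|_{L^{(p-1)q'}}^{p-1}$, needs $\theta\in L^2_t\dot H^{s}$ with $s\ge 1$. Step 2 only delivers $s=\delta_k+\frac{\beta}{2}=\frac{(2+\alpha)\beta p_k}{4}$.

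At the base step $p_k=2$ this is $s=\frac{(2+\alpha)\beta}{2}$, which is $<1$ whenever $\beta<\frac{2}{2+\alpha}$. Since $\frac{4-2\alpha}{4+\alpha}<\frac{2}{2+\alpha}$, this happens inside the lemma's hypotheses. For instance, $\alpha=\frac12$, $\beta=0.7$ gives $s=0.875$, while \eqref{gmawq86} still gives $p_{k+1}\approx 2.2>2$.

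In that regime $\nabla\theta$ has negative Sobolev regularity and is controlled in no Lebesgue space. So Step 1 cannot be carried out and the iteration cannot start. Once the whole derivative sits on $\theta$ and absolute values are taken, no choice of exponents in Step 3 can repair this.

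The missing idea is to split the derivative between $\theta$ and the test function. The paper does this as follows.
\begin{itemize}
\item It writes the forcing as $\int\Lambda^{1-\frac{\alpha}{2}}\theta\,\Lambda^{\frac{\alpha}{2}}(|\omega|^{p-2}\omega)\,dx$, up to a Riesz transform.
\item It applies the fractional chain rule \eqref{sdsvb999} to bound $\|\Lambda^{\frac{\alpha}{2}}(|\omega|^{p-2}\omega)\|_{L^r}$ by $\|\Lambda^{\frac{\alpha}{2}}\omega\|_{L^2}\|\omega\|_{L^{\frac{2r(p-2)}{2-r}}}^{p-2}$.
\item It uses $\int_0^t\|\Lambda^{\frac{\alpha}{2}}\omega\|_{L^2}^2\,d\tau<\infty$ from Lemma \ref{addftlem1}, an ingredient your plan never invokes.
\end{itemize}
Then only $\Lambda^{1-\frac{\alpha}{2}}\theta$ must be controlled. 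Interpolating it via \eqref{dfexbg6} between $\|\theta\|_{L^\infty}$ and $\|\Lambda^{\delta_k+\frac{\beta}{2}}\theta\|_{L^2}$ requires only $\frac{(2+\alpha)\beta p_k}{4}>1-\frac{\alpha}{2}$. This holds because $\beta\ge\frac{4-2\alpha}{4+\alpha}>\frac{2-\alpha}{2+\alpha}$ and $p_k\ge2$.

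The exponent relation then comes out directly. Take $r=\frac{(2+\alpha)\beta p_k}{(2+\alpha)\beta p_k+\alpha-2}\in(1,2)$ and interpolate $\omega$ between $L^{p}$ and $L^{\frac{2p}{2-\alpha}}$ with parameter $\lambda$. The time-integrability balance $\frac{3r-2}{r}\cdot\frac{p}{p-(p-2)\lambda}=2$ is exactly what produces \eqref{gmawq86}.

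In the complementary regime $\frac{(2+\alpha)\beta p_k}{4}\ge 1$, your H\"older route can close. However, it rests on a different balance and does not reproduce \eqref{gmawq86}, so your expectation that the optimization gives exactly $p_{k+1}$ is unsupported. The decisive obstruction remains the regime $s<1$.
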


\begin{proof}
Firstly, we claim that if \eqref{vbhkp02} holds, then we have
\begin{align}\label{vbhkp016}
 \|\Lambda^{\delta_{k}}\theta(t)\|_{L^{2}}^{2}
+ \int_{0}^{t}
\|\Lambda^{\delta_{k}+\frac{\beta}{2}}\theta(\tau)\|_{L^{2}}^{2}\,d\tau\leq C(t,\,u_{0},\,\theta_{0},\,N),
\end{align}
where
$$\delta_{k}=\frac{\beta}{2}\left(\frac{2+\alpha}{2}p_{k}-1\right)\in (0,\,1).$$
To this end, applying $\Lambda^{\delta_{k}}$ to $(\ref{Bouss})_{2}$ and multiplying the resultant equation by $\Lambda^{\delta_{k}}\theta$, we obtain by integrating over $\mathbb R^2$
$$
\frac{1}{2}\frac{d}{dt} \|\Lambda^{\delta_{k}}\theta(t)\|_{L^{2}}^{2}
+ \|\Lambda^{\delta_{k}+\frac{\beta}{2}}\theta\|_{L^{2}}^{2}
= - \int_{\mathbb{R}^{2}}
[\Lambda^{\delta_{k}}, u \cdot \nabla]\theta\,\,\Lambda^{\delta_{k}}\theta\,dx.$$
Thanks to \eqref{tvcbmp6} and \eqref{dfexbg6}, we infer
\begin{align}
 -\int_{\mathbb{R}^{2}}
[\Lambda^{\delta_{k}}, u \cdot \nabla]\theta\,\,\Lambda^{\delta_{k}}\theta\,dx
&\leq C\|[\Lambda^{\delta_{k}}, u \cdot \nabla]\theta\|_{L^{\frac{2(2+\alpha)p_{k}}{(2+\alpha)p_{k}+2}}}
\|\Lambda^{\delta_{k}}\theta\|_{L^{\frac{2(2+\alpha)p_{k}}{(2+\alpha)p_{k}-2}}}
\nonumber\\
&\leq C\|\nabla u\|_{L^{\frac{(2+\alpha)p_{k}}{2}}}
\|\Lambda^{\delta_{k}}\theta\|_{L^{\frac{2(2+\alpha)p_{k}}{(2+\alpha)p_{k}-2}}}^{2}
\nonumber\\
&\leq C\|\omega\|_{L^{\frac{(2+\alpha)p_{k}}{2}}}
\|\Lambda^{\delta_{k}}\theta\|_{\dot{B}_{\infty,\infty}^{-\delta_{k}}}
^{\frac{4}{(2+\alpha)p_{k}}}
\|\Lambda^{\delta_{k}}
\theta\|_{\dot{B}_{2,2}^{\frac{\beta}{2}}}
^{\frac{2(2+\alpha)p_{k}-4}{(2+\alpha)p_{k}}}
\nonumber\\
&\leq C\|\omega\|_{L^{p_{k}}}^{\frac{\alpha}{2+\alpha}}\|\omega\|_{L^{\frac{2p_{k}}{2-\alpha}}}^{\frac{2}{2+\alpha}}
\|\theta\|_{L^{\infty}}
^{\frac{4}{(2+\alpha)p_{k}}}
\|\Lambda^{\delta_{k}+\frac{\beta}{2}}
\theta\|_{L^{2}}
^{\frac{2(2+\alpha)p_{k}-4}{(2+\alpha)p_{k}}}
\nonumber\\
&\leq
\frac{1}{2}\|\Lambda^{\delta_{k}+\frac{\beta}{2}}\theta\|_{L^{2}}^{2}+
C\|\theta\|_{L^{\infty}}
^{2}\|\omega\|_{L^{p_{k}}}^{\frac{\alpha p_{k}}{2}}
\|\omega\|_{L^{\frac{2p_{k}}{2-\alpha}}}^{p_{k}}.\nonumber
\end{align}
As a result, one deduces
\begin{align}
 \frac{d}{dt} \|\Lambda^{\delta_{k}}\theta(t)\|_{L^{2}}^{2}
+ \|\Lambda^{\delta_{k}+\frac{\beta}{2}}\theta\|_{L^{2}}^{2}
\leq
C\|\theta\|_{L^{\infty}}
^{2}\|\omega\|_{L^{p_{k}}}^{\frac{\alpha p_{k}}{2}}
\|\omega\|_{L^{\frac{2p_{k}}{2-\alpha}}}^{p_{k}}.\nonumber
\end{align}
Using \eqref{vbhkp02} and integrating the above inequality in time, the desired bound \eqref{vbhkp016} holds true. With \eqref{vbhkp016} in hand, we are in the position to show \eqref{hgfdrtq11}. To this end, we take the inner product of \eqref{sdhyrp56} with $|\omega|^{p_{k+1}-2}\omega$ to get
\begin{align}\label{vbhkp017}
 \frac{1}{p_{k+1}}\frac{d}{dt}\|\omega(t)\|_{L^{p_{k+1}}}^{p_{k+1}}
 +c\|\omega\|_{L^{\frac{2p_{k+1}}{2-\alpha}}}^{p_{k+1}}\leq \int_{\mathbb{R}^{2}}
{\partial_{x_{1}}\theta\,\,|\omega|^{p_{k+1}-2}\omega\,dx}.
\end{align}
To bound the righthand side term of \eqref{vbhkp017}, we resort to the following chain rule of fractional derivative in the Sobolev framework (see \cite[Proposition 3.1]{Christw})
\begin{eqnarray}\label{sdsvb999}
 \|\Lambda^{s}(|f|^{m-2}f)\|_{L^{p}}\leq C\|\Lambda^{s}f\|_{L^{q}}\|f\|_{L^{r(m-2)}}^{m-2},
\end{eqnarray}
where $s\in (0,1)$, $m\in (2,\infty)$ and $p, q, r\in(1, \infty)^{3}$ such that $\frac{1}{p}=\frac{1}{q}+\frac{1}{r}$.
Now we choose
\begin{align}\label{schpdt89}
r=\frac{(2+\alpha)\beta p_{k}}{(2+\alpha)\beta p_{k}+\alpha-2}\in (1,2),\qquad \lambda=\frac{(3r-2)p_{k+1}-4r}{\alpha r(p_{k+1}-2)}\in (0,1).
\end{align}
Due to $\beta\geq\frac{4-2\alpha}{4+\alpha}$ and $p_{k}\geq 2$, it is not hard to check $r\in (1,2)$. Moreover, we indeed have $\lambda\in (0,1)$ by taking $p_{k+1}$ as
\begin{align}\label{sdfgwe28}
\frac{4r}{3r-2}<p_{k+1}<\frac{(4-2\alpha)r}{(3-\alpha)r-2}.
\end{align}
Hence, taking advantage of \eqref{sdsvb999} and \eqref{dfexbg6}, we are able to conclude
\begin{align}\label{vbhkp021}
 \left|\int_{\mathbb{R}^{2}}
{\partial_{x_{1}}\theta\,\,|\omega|^{p_{k+1}-2}\omega\,dx}\right|&\leq C\|\Lambda^{1-\frac{\alpha}{2}}\theta\|_{L^{\frac{r}{r-1}}}
\|\Lambda^{\frac{\alpha}{2}}(|\omega|^{p_{k+1}-2}\omega)\|_{L^{r}}\nonumber\\
&\leq C\|\Lambda^{1-\frac{\alpha}{2}}\theta\|_{L^{\frac{r}{r-1}}}
\|\Lambda^{\frac{\alpha}{2}}\omega\|_{L^{2}}
\||\omega|^{p_{k+1}-2}\|_{L^{\frac{2r}
{2-r}}}\nonumber\\
&\leq C
\|\Lambda^{1-\frac{\alpha}{2}}\theta
\|_{\dot{B}_{\infty,\infty}^{-(1-\frac{\alpha}{2})}}^{\frac{2-r}{r}}
\|\Lambda^{1-\frac{\alpha}{2}}\theta
\|_{\dot{B}_{2,2}^{\delta_{k}+\frac{\alpha+\beta}{2}-1}
}^{\frac{2(r-1)}{r}}
\|\Lambda^{\frac{\alpha}{2}}\omega\|_{L^{2}}
\|\omega \|_{L^{\frac{2r(p_{k+1}-2)}
{2-r}}}^{p_{k+1}-2}
\nonumber\\
&\leq C
\|\theta
\|_{\dot{B}_{\infty,\infty}^{0}}^{\frac{2-r}{r}}
\|\Lambda^{\delta_{k}+\frac{\beta}{2}}\theta\|_{L^{2}}^{\frac{2(r-1)}{r}}
\|\Lambda^{\frac{\alpha}{2}}\omega\|_{L^{2}}
\|\omega \|_{L^{\frac{2r(p_{k+1}-2)}
{2-r}}}^{p_{k+1}-2}
\nonumber\\
&\leq C
\|\theta
\|_{L^{\infty}}^{\frac{2-r}{r}}
\|\Lambda^{\delta_{k}+\frac{\beta}{2}}\theta\|_{L^{2}}^{\frac{2(r-1)}{r}}
\|\Lambda^{\frac{\alpha}{2}}\omega\|_{L^{2}}
\|\omega \|_{L^{\frac{2r(p_{k+1}-2)}
{2-r}}}^{p_{k+1}-2}
\nonumber\\
&\leq C
\|\theta
\|_{L^{\infty}}^{\frac{2-r}{r}}
\|\Lambda^{\delta_{k}+\frac{\beta}{2}}\theta\|_{L^{2}}^{\frac{2(r-1)}{r}}
\|\Lambda^{\frac{\alpha}{2}}\omega\|_{L^{2}}
\|\omega \|_{L^{p_{k+1}}}^{(p_{k+1}-2)(1-\lambda)}
\|\omega \|_{L^{\frac{2p_{k+1}}{2-\alpha}}}^{(p_{k+1}-2)\lambda}
\nonumber\\
&\leq \frac{c}{2}\|\omega\|_{L^{\frac{2p_{k+1}}{2-\alpha}}}^{p_{k+1}}+
C(\|\Lambda^{\delta_{k}+\frac{\beta}{2}}\theta\|_{L^{2}}^{2}+
\|\Lambda^{\frac{\alpha}{2}}\omega\|_{L^{2}}^{2})\nonumber\\
& \quad \times
(1+\|\omega \|_{L^{p_{k+1}}}^{p_{k+1}}),
\end{align}
where in the last inequality we have used the fact
\begin{align}\label{schpdt90}
\frac{p_{k+1}}{p_{k+1}-(p_{k+1}-2)\lambda}\frac{3r-2}{r}=2.
\end{align}
Based on \eqref{schpdt90} and $\lambda$ as well as $r$ given by \eqref{schpdt89}, we deduce
\begin{align}\label{xcmkpw89}
p_{k+1}=\frac{8r}{(6+\alpha)r-4-2\alpha}=\frac{8\beta p_{k}}{(2-\alpha)(\beta p_{k}+2)},
\end{align}
which gives \eqref{gmawq86}.
Furthermore, due to $r<2$, we are able to show that the $p_{k+1}$ of \eqref{xcmkpw89} indeed satisfies \eqref{sdfgwe28}.
Putting \eqref{vbhkp021} into \eqref{vbhkp017}, we arrive at
\begin{align}\label{vbhkp022}
 \frac{d}{dt}\|\omega(t)\|_{L^{p_{k+1}}}^{p_{k+1}}
 +c\|\omega\|_{L^{\frac{2p_{k+1}}{2-\alpha}}}^{p_{k+1}}\leq C(\|\Lambda^{\delta_{k}+\frac{\beta}{2}}\theta\|_{L^{2}}^{2}+
\|\Lambda^{\frac{\alpha}{2}}\omega\|_{L^{2}}^{2})
(1+\|\omega \|_{L^{p_{k+1}}}^{p_{k+1}}).
\end{align}
Keeping in mind \eqref{vbhkp016} and \eqref{vbhkp01}, the desired \eqref{hgfdrtq11} follows from \eqref{vbhkp022}.
Consequently, we complete the proof of Lemma \ref{addftlem2}.
\end{proof}

\vskip .1in
By taking the limit, we can show the following improved $L^{p}$-bound of $\omega$, which plays an important role in deriving the boundedness of the vorticity and the gradient of the temperature.
\begin{lemma}\label{addftlem3}
If $\alpha,\beta\in(0,1)$ satisfy $\beta\geq\frac{4-2\alpha}{4+\alpha}$, then it holds
\begin{eqnarray}\label{xcyue5}
\|\omega(t)\|_{L^{p}}^{p}
+\int_{0}^{t}{
\|\omega(\tau)\|_{L^{\frac{2p}{2-\alpha}}}^{p}\,d\tau}\leq
C(t,\,u_{0},\,\theta_{0})
\end{eqnarray}
with any $p$ satisfying
\begin{eqnarray}\label{xcyue6}
2\leq p<\min\left\{\frac{2(\alpha+4\beta-2)}{(2-\alpha)\beta},\ \ \frac{2(2+\beta)}{(2+\alpha)\beta}\right\}.
\end{eqnarray}
\end{lemma}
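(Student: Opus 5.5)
The plan is to iterate Lemma \ref{addftlem2}, starting from the $L^2$ information in Lemma \ref{addftlem1}, and then interpolate. Set
$$f(q)=\frac{8\beta q}{(2-\alpha)(\beta q+2)},\qquad p^{*}=\frac{2(\alpha+4\beta-2)}{(2-\alpha)\beta},\qquad p^{\sharp}=\frac{2(2+\beta)}{(2+\alpha)\beta},$$
so that $p_{k+1}=f(p_k)$ by \eqref{gmawq86}.

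\emph{Base case.} Lemma \ref{addftlem1} gives a uniform bound on $\|\omega\|_{L^2}^2$ and on $\int_0^t\|\Lambda^{\frac{\alpha}{2}}\omega\|_{L^2}^2\,d\tau$. The Sobolev embedding $\dot H^{\alpha/2}\hookrightarrow L^{\frac{4}{2-\alpha}}$ then gives \eqref{vbhkp02} with $p_0=2$. For this start to be admissible in Lemma \ref{addftlem2} we need $2<p^{\sharp}$, i.e. $(2+\alpha)\beta<2+\beta$. This holds because $\alpha\beta<2$.

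\emph{Dynamics of the map.} On $(0,\infty)$ the map $f$ is increasing, continuous and concave, and its unique positive fixed point solves $(2-\alpha)(\beta q+2)=8\beta$, which gives exactly $q=p^{*}$. We have $f(2)>2$ if and only if $4\beta>(2-\alpha)(1+\beta)$, i.e. $\beta>\frac{2-\alpha}{2+\alpha}$. This is implied by $\beta\ge\frac{4-2\alpha}{4+\alpha}$, since $(4-2\alpha)(2+\alpha)=2(2-\alpha)(2+\alpha)>(2-\alpha)(4+\alpha)$. It follows that $p^{*}>2$, that $f(q)>q$ for $q\in[2,p^{*})$, and that the iterates $p_k$ increase strictly to $p^{*}$.

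\emph{Main step: reaching a given $p$.} Fix $p$ in the range \eqref{xcyue6}. If $p\le 2$ there is nothing to prove. Otherwise run the iteration $p_0=2$, $p_{k+1}=f(p_k)$. Because $p<p^{*}$, some first index $K$ has $p_K\ge p$. Each earlier $p_k$ with $k<K$ satisfies $p_k<p<p^{\sharp}$, so Lemma \ref{addftlem2} applies at every step and gives \eqref{vbhkp02} for $p_0,\dots,p_K$. The difficulty is that the last iterate $p_K$ may overshoot, even beyond $p^{\sharp}$; in any case we want the bound at $p$ itself.

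\emph{Handling the overshoot.} Instead of applying Lemma \ref{addftlem2} at $p_{K-1}$, apply it at a modified starting exponent $q=f^{-1}(p)\in(p_{K-2},p_{K-1}]$, which exists by continuity and monotonicity of $f$. This requires knowing \eqref{vbhkp02} at $q$, which we obtain by interpolation between the levels $2$ and $p_{K-1}$, both already controlled. For the first term, Hölder in space gives $\|\omega\|_{L^q}\le\|\omega\|_{L^2}^{1-\vartheta}\|\omega\|_{L^{p_{K-1}}}^{\vartheta}$. For the time-integrated term, with $\frac1q=\frac{1-\vartheta}{2}+\frac{\vartheta}{p_{K-1}}$, we have
$$\|\omega\|_{L^{\frac{2q}{2-\alpha}}}\le\|\omega\|_{L^{\frac{4}{2-\alpha}}}^{1-\vartheta}\|\omega\|_{L^{\frac{2p_{K-1}}{2-\alpha}}}^{\vartheta}.$$
Raising to the power $q$ and applying Hölder in time with exponents $\frac{2}{q(1-\vartheta)}$ and $\frac{p_{K-1}}{q\vartheta}$ (these are conjugate by the choice of $\vartheta$) bounds $\int_0^t\|\omega\|_{L^{\frac{2q}{2-\alpha}}}^q\,d\tau$ by the two known time integrals. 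Since $q<p<p^{\sharp}$, Lemma \ref{addftlem2} is admissible at $q$, and its output exponent $f(q)=p$ gives exactly \eqref{xcyue5}.

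I expect the main obstacle to be this bookkeeping in the time-integrated interpolation, together with making sure every exponent handed to Lemma \ref{addftlem2} stays below $p^{\sharp}$. The simpler alternative of interpolating downward from $p_K$ is unavailable whenever $p_K$ overshoots $p^{\sharp}$, which is why the upward interpolation to $q=f^{-1}(p)$ is used.
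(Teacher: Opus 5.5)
Your overall plan is the paper's: start from the $L^2$ level given by Lemma \ref{addftlem1}, iterate Lemma \ref{addftlem2} with $p_{k+1}=f(p_k)$, and use that the iterates increase to the fixed point $p^{*}$. Your fixed-point analysis and your Hölder-in-space/Hölder-in-time interpolation are correct. For $K\ge 2$ the device $q=f^{-1}(p)\in(p_{K-2},p_{K-1}]$ also works.

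\textbf{The gap is the first step, $K=1$,} i.e.\ $2<p\le p_1=f(2)$. There is no $p_{K-2}$. Since $f$ is increasing with $f(2)>2$, you get $q=f^{-1}(p)<2$ whenever $p<f(2)$. Nothing is known about $\|\omega\|_{L^q}$ for $q<2$, and Lemma \ref{addftlem2} requires $p_k\ge 2$. So your argument gives nothing for $p\in(2,f(2))$, a range that is never empty.

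This range can exhaust the whole admissible interval. Take $\alpha=\frac12$, $\beta=\frac{19}{20}$, which satisfies $\beta\ge\frac{4-2\alpha}{4+\alpha}=\frac23$. Then $f(2)=\frac{304}{117}\approx 2.60$, while $p^{\sharp}=\frac{236}{95}\approx 2.48<p^{*}$. Every $p\neq 2$ allowed by \eqref{xcyue6} therefore falls in the bad case. Moreover, downward interpolation from $p_1$ is also excluded there if Lemma \ref{addftlem2} is used as literally stated, since $p_1\ge p^{\sharp}$.

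\textbf{The missing observation:} the restriction $p_{k+1}<p^{\sharp}$ in Lemma \ref{addftlem2} is never used in its proof. The bound $p^{\sharp}$ enters only through $\delta_k<1$ in \eqref{vbhkp016}, which concerns the input $p_k$. The choice of $r$ and $\lambda$, and the verification of \eqref{sdfgwe28}, need only $r<2$. Hence \eqref{vbhkp02} holds at $p_K=f(p_{K-1})$ as soon as $p_{K-1}<p^{\sharp}$, even if $p_K\ge p^{\sharp}$.

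You can then interpolate downward between the levels $2$ and $p_K$, with exactly the bookkeeping you wrote (target $p$, upper level $p_K$ in place of $p_{K-1}$). This gives \eqref{xcyue5} for every $p\in[2,p_K]$, uniformly in $K\ge 1$, and makes the upward-interpolation device unnecessary. The paper's brief ``iterate and pass to the limit'' argument tacitly relies on this. In the sub-case $f(2)<p^{\sharp}$, plain downward interpolation from $p_1$ with the lemma as stated already suffices.

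A minor point: $2<p^{\sharp}$ is equivalent to $(1+\alpha)\beta<2$, not to $\alpha\beta<2$. It still holds, since $\alpha,\beta<1$.
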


\begin{proof}
Clearly, it follows from Lemma \ref{addftlem2} that
\begin{eqnarray}
\|\omega(t)\|_{L^{p_{k}}}^{p_{k}}
+\int_{0}^{t}{
\|\omega(\tau)\|_{L^{\frac{2p_{k}}{2-\alpha}}}^{p_{k}}\,d\tau}\leq
C(t,\,u_{0},\,\theta_{0}),\ \ \ k=1,2, \cdot\cdot\cdot \nonumber
\end{eqnarray}
for any $p_{k} \in \Big[2,\frac{2(2+\beta)}{(2+\alpha)\beta}\Big)$ satisfying
\begin{eqnarray}\label{awrety882}
p_{k+1}=\frac{8\beta p_{k}}{(2-\alpha)(\beta p_{k}+2)}.
\end{eqnarray}
Keeping in mind $p_{1}=2$, we derive from \eqref{awrety882} that
\begin{align}
p_{k}=\frac{2(\alpha+4\beta-2)}{(2-\alpha)\beta+[(2+\alpha)\beta+\alpha-2]
\big(\frac{2-\alpha}{4\beta}\big)^{k-1}}.\nonumber
\end{align}
Due to $\beta\geq\frac{4-2\alpha}{4+\alpha}>\frac{2-\alpha}{2+\alpha}>\frac{2-\alpha}{4}$, the sequence $\{p_{k}\}_{k\in \mathbb{N}}$ is increasing and it has the limit
$$\lim_{k\rightarrow\infty}p_{k}=\frac{2(\alpha+4\beta-2)}{(2-\alpha)\beta}.$$
We thus have
\begin{eqnarray}\label{cdsrt1}
2\leq p<\frac{2(\alpha+4\beta-2)}{(2-\alpha)\beta}.
\end{eqnarray}
Recalling the following restriction
$$p_{k}<\frac{2(2+\beta)}{(2+\alpha)\beta},\quad k=1,2,\cdot\cdot\cdot,$$
we further choose $p$ satisfying
\begin{eqnarray}\label{cdsrt2}
 p<\frac{2(2+\beta)}{(2+\alpha)\beta}.
\end{eqnarray}
Concerning \eqref{cdsrt1} and \eqref{cdsrt2}, $p$ should be satisfied \eqref{xcyue6}.
As a result, we finish the proof of Lemma \ref{addftlem3}.
\end{proof}

\vskip .1in
With \eqref{xcyue5} at our disposal, we are ready to complete the proof of Part 1 of Theorem \ref{Th3}. Here we provide two methods.
\vskip .1in

\begin{center}
\textbf{Method 1}
\end{center}
\vskip .1in

Taking advantage of \eqref{xcyue5}, we are able to prove the following key estimate.
\begin{lemma}\label{addftlem4}
If $\alpha,\beta\in(0,1)$ satisfy \eqref{dfg456}, then the following estimate holds
\begin{eqnarray}\label{sdhklp28}
\sup_{0\leq t\leq T}(\|\omega(t)\|_{L^{\infty}}+\|\nabla\theta (t)\|_{L^{\infty}})\leq
C(T,\,u_{0},\,\theta_{0}).
\end{eqnarray}
\end{lemma}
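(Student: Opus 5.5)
We will obtain \eqref{sdhklp28} by coupling $L^\infty$ estimates for $\omega$ and $\nabla\theta$. Throughout we take Lemma \ref{addftlem3} as the main input, using a fixed $p$ just below the threshold in \eqref{xcyue6}.

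\textbf{Step 1: a H\"older bound for $\theta$.} Since $\omega\in L^\infty_tL^p$ with $p>2$, we have $u\in L^\infty_t C^{1-2/p}$. The idea is to gain regularity on $\theta$ that beats one full derivative minus $\alpha$. We propagate the $L^q$ norm of $\nabla\theta$, or the $L^2$ norm of $\Lambda^{\sigma}\theta$ for $\sigma$ above $1-\frac{\beta}{2}$, by the same commutator argument as in \eqref{vbhkp016}. The commutator \eqref{tvcbmp6} is controlled by $\|\nabla u\|_{L^{p}}\le C\|\omega\|_{L^p}$ together with the interpolation \eqref{dfexbg6} against $\|\theta\|_{L^\infty}$. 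The target is
$$\int_0^T\|\Lambda^{1-\frac{\alpha}{2}+\epsilon}\theta\|_{L^{q}}^{\,m}\,dt<\infty$$
with $q$ large enough, which by Sobolev embedding gives $\partial_{x_1}\theta\in L^1_tC^{\gamma-\alpha}$ type control.

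\textbf{Step 2: the $L^\infty$ bound for $\omega$.} Here we use the Constantin--Vicol nonlinear lower bound \cite{CV}. At a point where $|\omega|$ is maximal,
$$\Lambda^\alpha\omega\,\omega(\bar x)\ \ge\ c\,\frac{|\omega(\bar x)|^{2+\frac{\alpha p}{2}}}{\|\omega\|_{L^p}^{\frac{\alpha p}{2}}}.$$
The forcing $\partial_{x_1}\theta$ is handled by splitting the kernel representation, or directly by $\|\nabla\theta\|_{L^\infty}$. The weaker alternative is to use $\|\nabla\theta\|_{L^\infty}$ and close in Step 3.

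\textbf{Step 3: the $L^\infty$ bound for $\nabla\theta$.} Differentiate the $\theta$-equation and evaluate at the maximum point of $|\nabla\theta|$. The lower bound for $\Lambda^\beta$ applied to $\nabla\theta$, in terms of $\|\theta\|_{L^\infty}$, is
$$\nabla\theta\cdot\Lambda^\beta\nabla\theta\ \ge\ c\,\frac{|\nabla\theta|^{2+\beta}}{\|\theta\|_{L^\infty}^{\beta}}.$$
The stretching term $|\nabla u||\nabla\theta|^2$ is then estimated by splitting $\nabla u$ into low and high frequency parts. The low part is bounded by $\|\omega\|_{L^p}$ times a power of the cutoff; the high part is controlled via $C^{\gamma}$ of $\omega$ or a log bound using $\|\omega\|_{L^\infty}$. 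Combining Steps 2 and 3 yields a differential inequality for $Y=\|\omega\|_{L^\infty}+\|\nabla\theta\|_{L^\infty}$ whose superlinear damping dominates, giving \eqref{sdhklp28}.

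\textbf{Main obstacle.} The hard step is the cross-coupling: the forcing $\nabla\theta$ in the $\omega$-equation against the damping $|\omega|^{1+\alpha p/2}$, and $\nabla u$ in the $\theta$-equation against $|\nabla\theta|^{1+\beta}$. Exponent bookkeeping must show each is dominated. I would first try Young's inequality with the powers $|\omega|^{1+\alpha p/2}$ and $|\nabla\theta|^{1+\beta}$. This requires roughly $\beta(1+\frac{\alpha p}{2})>1$-type conditions, which should hold precisely because $p$ can be taken near $\frac{2(\alpha+4\beta-2)}{(2-\alpha)\beta}$ under \eqref{dfg456}.
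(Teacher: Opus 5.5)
Your ingredients are the right ones: the nonlinear lower bound for $\Lambda^\alpha$ with $\|\omega\|_{L^p}$ supplied by Lemma \ref{addftlem3}, the bound $\nabla\theta\cdot\Lambda^\beta\nabla\theta\ge c|\nabla\theta|^{2+\beta}\|\theta\|_{L^\infty}^{-\beta}$, and a logarithmic estimate for $\|\nabla u\|_{L^\infty}$. The threshold you quote, $\beta(1+\frac{\alpha p}{2})>1$ (i.e.\ $p>\frac{2(1-\beta)}{\alpha\beta}$), is also exactly the paper's.

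\textbf{The gap: your closing step does not produce that threshold.} Write $X=\|\omega\|_{L^\infty}$ and $Z=\|\nabla\theta\|_{L^\infty}$. You get $\dot X+cX^{1+\frac{\alpha p}{2}}\le Z$ and $\dot Z+cZ^{1+\beta}\le C\,XZ\,L(t)$, with $L$ a logarithm. Suppose you close an inequality for $Y=X+Z$ by applying Young to $XZ\,L$ against the dampings $X^{1+\frac{\alpha p}{2}}$ and $Z^{1+\beta}$. This requires
\[
\frac{2}{2+\alpha p}+\frac{1}{1+\beta}<1, \quad\text{i.e.}\quad \alpha\beta p>2 .
\]
But $\frac{2(2+\beta)}{(2+\alpha)\beta}<\frac{2}{\alpha\beta}$ for all $\alpha,\beta\in(0,1)$. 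Hence no $p$ allowed by \eqref{xcyue6} meets this, and the unweighted $Y$-inequality cannot close.

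\textbf{The missing idea.} You need to use two facts: the stretching term is linear in $Z$, and the $\omega$-inequality slaves $X$ to $Z$. The paper argues by contradiction with a first hitting time. Let $T_0$ be the first time with $Z(T_0)=M$. On $[0,T_0]$ the $X$-inequality gives $X\le (M/c)^{\frac{2}{2+\alpha p}}=\widetilde M$. Then the $Z$-inequality, combined with \eqref{tNew013}, gives
\[
Z^\beta\le\max\Big\{\|\nabla\theta_0\|_{L^\infty}^\beta,\ \frac{C}{c}\widetilde M\ln\big(e+T(1+\widetilde M+M)^\Gamma\big)\Big\}.
\]
For $M$ large this is below $(M/4)^\beta$ exactly when $\beta>\frac{2}{2+\alpha p}$, contradicting $Z(T_0)=M$. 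Alternatively, a weighted functional $Z^a+\lambda X^b$ works, with
\[
a+\beta<\beta\big(b+\tfrac{\alpha p}{2}\big)<\beta\big(1+\tfrac{\alpha p}{2}\big)(a+\beta);
\]
the unweighted sum does not.

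\textbf{Smaller points.}
\begin{itemize}
\item A $C^\gamma$ bound on $\omega$ is not available, so the logarithmic route is the only viable one. It needs the $H^s$ energy estimate of the appendix, which shows $\|\Lambda^s u\|_{L^2}^2\lesssim 1+\int_0^t(1+X+Z)^\Gamma\,d\tau$; otherwise the argument of the logarithm is uncontrolled.
\item Admissibility of $p$ requires $\frac{2(1-\beta)}{\alpha\beta}$ to lie below both entries of the minimum in \eqref{xcyue6}, not only the first. This follows from $\beta>\frac{2+\alpha-\alpha^2}{2+3\alpha}$ and $\beta>\frac{2-\alpha}{2(1+\alpha)}$, both implied by \eqref{dfg456}.
\item Your Step 1 is not needed.
\end{itemize}
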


\begin{proof}
The proof is based on the argument
of nonlinear lower bounds for the fractional Laplacian established in \cite{CV}.
We first recall the following pointwise bound (see \eqref{tdfhb12} below)
\begin{eqnarray} \label{abbdghw569}
f(x)\Lambda^{\alpha}f(x)\geq \frac{1}{2}\Lambda^{\alpha}(|f(x)|^{2})+c\frac{|f(x)|^{2+\frac{\alpha p}{2}}}{\|f\|_{L^{p}}^{\frac{\alpha p}{2}}},
\end{eqnarray}
where $p\in [1,\infty)$ and $c=c(\alpha,p)>0$.
Multiplying \eqref{sdhyrp56} by $\omega$ and using \eqref{abbdghw569}, we are able to show that
\begin{align}\label{sdfbnkp15}
\frac{1}{2}\left\{\partial_{t}|\omega|^{2}+(u\cdot \nabla)|\omega|^{2}+\Lambda^{\alpha}|\omega|^{2}\right\}+c\frac{|\omega |^{2+\frac{\alpha p}{2}}}{\|\omega\|_{L^{p}}^{\frac{\alpha p}{2}}}\leq  \|\nabla\theta(t)\|_{L^{\infty}}|\omega|.
\end{align}
For any $t>0$, there exists $\tilde{x}=\tilde{x}(t)$ such that
$$\omega(\tilde{x}(t),t)=\|\omega(t)\|_{L^{\infty}}.$$
Moreover, one can show (see \cite{CC} for example)
$$(\partial_{t}|\omega|)(\tilde{x}(t),t)=\frac{d}{dt}\omega(\tilde{x}(t),t)
=\frac{d}{dt}\|\omega(t)\|_{L^{\infty}}.$$
Let us recall the direct observations that
$$(u\cdot \nabla)|\omega(\tilde{x}(t),t)|^{2}=0,\quad \Lambda^{\alpha}|\omega(\tilde{x}(t),t)|^{2}\geq0.$$
Consequently, letting $x=\tilde{x}(t)$ in \eqref{sdfbnkp15} and using the above facts, we arrive at
\begin{align}
\frac{1}{2}\frac{d}{dt}\|\omega(t)\|_{L^{\infty}}^{2}+c\frac{
\|\omega(t)\|_{L^{\infty}}^{2+\frac{\alpha p}{2}}}{\|\omega\|_{L^{p}}^{\frac{\alpha p}{2} }}
\leq  \|\nabla\theta(t)\|_{L^{\infty}}\|\omega\|_{L^{\infty}},\nonumber
\end{align}
which yields
\begin{align}\label{fgup98}
\frac{d}{dt}\|\omega(t)\|_{L^{\infty}}+c\frac{
\|\omega(t)\|_{L^{\infty}}^{1+\frac{\alpha p}{2}}}{\|\omega\|_{L^{p}}^{\frac{\alpha p}{2} }}
\leq  \|\nabla\theta(t)\|_{L^{\infty}}.
\end{align}
Making use of \eqref{xcyue5}, one gets from \eqref{fgup98} that
\begin{align}\label{dfhkmp78}
\frac{d}{dt}\|\omega(t)\|_{L^{\infty}}+c
\|\omega(t)\|_{L^{\infty}}^{1+\frac{\alpha p}{2}}
\leq  \|\nabla\theta(t)\|_{L^{\infty}},
\end{align}
where here and in what follows, $p$ should satisfy \eqref{xcyue6}.
Resorting to \cite[(6.7)]{CV} or \cite[(2.36)]{YX201502} and arguing as \eqref{dfhkmp78}, we may derive
\begin{eqnarray}
\frac{d}{dt}\|\nabla\theta(t)\|_{L^{\infty}}+c\frac{
\|\nabla\theta(t)\|_{L^{\infty}}^{1+\beta}}{\|\theta_{0}\|_{L^{\infty}}^{\beta}}
\leq \|\nabla u\|_{L^{\infty}}\|\nabla\theta(t)\|_{L^{\infty}},\nonumber
\end{eqnarray}
which yields
\begin{eqnarray}\label{tNew011}
\frac{d}{dt}\|\nabla\theta(t)\|_{L^{\infty}}+c
\|\nabla\theta(t)\|_{L^{\infty}}^{1+\beta}
\leq \|\nabla u\|_{L^{\infty}}\|\nabla\theta(t)\|_{L^{\infty}},
\end{eqnarray}
To control $\|\nabla u\|_{L^{\infty}}$, we resort to the following logarithmic inequality
\begin{align}\label{tNew013}
\|\nabla u(t)\|_{L^{\infty}} \leq C(1+\|\omega(t)\|_{L^{\infty}})\ln
\Big(e+\int_{0}^{t}{\big(1+\|\omega(\tau)\|_{L^{\infty}}+\|\nabla\theta(\tau)
\|_{L^{\infty}}
\big)^{\Gamma}\,d\tau}\Big),
\end{align}
where  $C=C(t,\|u_{0}\|_{H^{s}},\|\theta_{0}\|_{H^{s}})$ and  $\Gamma=\Gamma(\alpha,\beta)$ is a positive constant depending on $\alpha$ and $\beta$.
We point out that the proof of \eqref{tNew013} given by Constantin and Vicol  (see \cite[(6.10)]{CV}) requires the initial data belonging to $H^s(\mathbb{R}^2)$ for $s\geq3$.
Actually, \eqref{tNew013} is still valid for any $s>2$, whose proof is postponed in Appendix \ref{appSec2}. It thus follows from (\ref{tNew011}) that
\begin{align}\label{sdgkh661}
&\frac{d}{dt}\|\nabla\theta(t)\|_{L^{\infty}}+c
\|\nabla\theta(t)\|_{L^{\infty}}^{1+\beta}
\nonumber\\& \leq C\|\nabla\theta(t)\|_{L^{\infty}}(1+\|\omega(t)\|_{L^{\infty}}) \ln
\Big(e+\int_{0}^{t}{\big(1+\|\omega(\tau)\|_{L^{\infty}}+\|\nabla\theta(\tau)
\|_{L^{\infty}}
\big)^{\Gamma}\,d\tau}\Big).
\end{align}
Let $M>0$ be large enough to be fixed hereafter. Assuming the solutions blow up at time $T$, which implies
$$\lim_{t\rightarrow T}\|\nabla\theta(t)\|_{L^{\infty}}=\infty.$$
Then one takes $T_{0}\in (0,\,T)$ as the first time such that $$\|\nabla\theta(T_{0})\|_{L^{\infty}}=M\geq 4\|\nabla\theta_{0}\|_{L^{\infty}}.$$ In view of (\ref{dfhkmp78}), we see that for any $t\in [0,\,T_{0}]$
$$\|\omega(t)\|_{L^{\infty}}\leq\max\Big\{\|\omega_{0}\|_{L^{\infty}},\,\,\, \Big(\frac{M}{c}\Big)^{\frac{2}{2+\alpha p}}\Big\}
=\Big(\frac{M}{c}\Big)^{\frac{2}{2+\alpha p}}\triangleq \widetilde{M},$$
as long as $M$ is large enough in terms of $\|\omega_{0}\|_{L^{\infty}}$, $\alpha$, $p$ and $c$. Therefore, we deduce from \eqref{sdgkh661} that
\begin{align}\label{sdgkh662}
 \frac{d}{dt}\|\nabla\theta(t)\|_{L^{\infty}}+c
\|\nabla\theta(t)\|_{L^{\infty}}^{1+\beta}
  &\leq C\|\nabla\theta(t)\|_{L^{\infty}}\widetilde{M} \ln
\Big(e+\int_{0}^{t}{\big(1+\widetilde{M}+M
\big)^{\Gamma}\,d\tau}\Big)\nonumber\\  &\leq C\|\nabla\theta(t)\|_{L^{\infty}}\widetilde{M} \ln
\Big(e+ T\big(1+\widetilde{M}+M
\big)^{\Gamma} \Big).
\end{align}
Similarly, we obtain from \eqref{sdgkh662} that for any $t\in [0,\,T_{0}]$
\begin{eqnarray}
\|\nabla\theta(t)\|_{L^{\infty}}^{\beta}\leq \max\left\{\|\nabla\theta_{0}\|_{L^{\infty}}^{ \beta},\,\,\frac{C}{c}\widetilde{M} \ln
\Big(e+ T\big(1+\widetilde{M}+M
\big)^{\Gamma} \Big)\right\}.\nonumber
\end{eqnarray}
Keeping in mind $\widetilde{M}\approx M^{\frac{2}{2+\alpha p}}$, we fix $M$ large enough such that
$$\frac{C}{c}\widetilde{M} \ln
\Big(e+ T\big(1+\widetilde{M}+M
\big)^{\Gamma} \Big)\leq \Big(\frac{M}{4}\Big)^{\beta},$$
which is true as long as
$$\beta>\frac{2}{2+\alpha p}\Leftrightarrow p>\frac{2(1-\beta)}{\alpha \beta}.$$
Hence, we can verify that $\|\nabla\theta(T_{0})\|_{L^{\infty}}\leq\frac{M}{4}$, which contradicts the definition of $T_{0}$. It further indicates that $\|\nabla\theta(t)\|_{L^{\infty}}$ never blows up as $t\rightarrow T$ when $T<\infty$. As a result, we obtain
$$
\sup_{0\leq t\leq T}\|\nabla\theta (t)\|_{L^{\infty}}\leq
C(T,\,u_{0},\,\theta_{0}),
$$
which along with \eqref{dfhkmp78} gives
$$\sup_{0\leq t\leq T}\|\omega(t)\|_{L^{\infty}}\leq \|\omega_{0}\|_{L^{\infty}}
+\int_{0}^{T}{\|\nabla \theta(\tau)\|_{L^{\infty}}\,d \tau}\leq
C(T,\,u_{0},\,\theta_{0}).$$
We thus derive the desired bound \eqref{sdhklp28}. Finally, let us show the above $p$ is workable as long as \eqref{dfg456} holds. Actually, it should satisfy
\begin{eqnarray}\label{zfnvwrq89}
\max\left\{2,\ \frac{2(1-\beta)}{\alpha \beta}\right\}<p<\min\left\{\frac{2(\alpha+4\beta-2)}{(2-\alpha)\beta},\ \ \frac{2(2+\beta)}{(2+\alpha)\beta}\right\},
\end{eqnarray}
where
$$\beta\geq\frac{4-2\alpha}{4+\alpha},\quad \alpha\leq \frac{2}{3}.$$
On the one hand, one may check that
\begin{align}
\beta\geq\frac{4-2\alpha}{4+\alpha}>\frac{2-\alpha}{2+\alpha}\Rightarrow
2<\frac{2(\alpha+4\beta-2)}{(2-\alpha)\beta},\nonumber
\end{align}
\begin{align}
\beta>\frac{2+\alpha-\alpha^{2}}{2+3\alpha}\Rightarrow\frac{2(1-\beta)}{\alpha \beta}<\frac{2(\alpha+4\beta-2)}{(2-\alpha)\beta}.\nonumber
\end{align}
On the other hand, we are able to show
$$\beta<\frac{2}{1+\alpha}\Rightarrow 2<\frac{2(2+\beta)}{(2+\alpha)\beta},$$
$$\beta>\frac{2-\alpha}{2(1+\alpha)}\Rightarrow \frac{2(1-\beta)}{\alpha \beta}<\frac{2(2+\beta)}{(2+\alpha)\beta}.$$
Consequently, $\beta$ should obey
$$\max\left\{ \frac{4-2\alpha}{4+\alpha},\ \ \frac{2+\alpha-\alpha^{2}}{2+3\alpha},\ \ \frac{2-\alpha}{2(1+\alpha)}\right\}\equiv \frac{4-2\alpha}{4+\alpha}\leq
\beta<\frac{2}{1+\alpha},$$
which is workable.
Consequently, we complete the proof of Lemma \ref{addftlem4}.
\end{proof}

\begin{proof}[The proof of Part 1 of Theorem \ref{Th3}]
Keeping in mind \eqref{sdhklp28}, the global regularity follows immediately.
In fact, the standard energy method allows us to derive
\begin{align}\label{cvbmr82}
&\frac{d}{dt}(\|\Lambda^{s}u(t)\|_{L^{2}}^{2}+
\|\Lambda^{s}\theta(t)\|_{L^{2}}^{2})+\|\Lambda^{s+\frac{\alpha}{2}}
u\|_{L^{2}}^{2}+\|\Lambda^{s+\frac{\beta}{2}}\theta\|_{L^{2}}^{2}\nonumber\\
&\leq C (1+\|\nabla u\|_{L^{\infty}}+\|\nabla \theta\|_{L^{\infty}})(\|\Lambda^{s}u\|_{L^{2}}^{2}+
\|\Lambda^{s}\theta\|_{L^{2}}^{2}).
\end{align}
Recalling the following logarithmic Sobolev embedding inequality
\begin{eqnarray}\label{sdffgbj98}
\|\nabla u\|_{L^{\infty}}
\leq C\Big(1+\|u\|_{L^{2}}+
\|\omega\|_{L^{\infty}} \ln\big(e+\|
\Lambda^{s}u\|_{L^{2}}\big)\Big), \ \ s>2,
\end{eqnarray}
we deduce from \eqref{cvbmr82} that
\begin{align}\label{cvbmr83}
&\frac{d}{dt}(\|\Lambda^{s}u(t)\|_{L^{2}}^{2}+
\|\Lambda^{s}\theta(t)\|_{L^{2}}^{2})+\|\Lambda^{s+\frac{\alpha}{2}}
u\|_{L^{2}}^{2}+\|\Lambda^{s+\frac{\beta}{2}}\theta\|_{L^{2}}^{2}\nonumber\\
&\leq C (1+\|\omega\|_{L^{\infty}}+\|\nabla \theta\|_{L^{\infty}})\ln(e+\|\Lambda^{s}u\|_{L^{2}}^{2}+
\|\Lambda^{s}\theta\|_{L^{2}}^{2})(\|\Lambda^{s}u\|_{L^{2}}^{2}+
\|\Lambda^{s}\theta\|_{L^{2}}^{2}).
\end{align}
Applying the Log-Gronwall inequality to \eqref{cvbmr83} yields
\begin{eqnarray}
\|\Lambda^{s}u(t)\|_{L^{2}}^{2}+
\|\Lambda^{s}\theta(t)\|_{L^{2}}^{2}+\int_{0}^{t}{(\|\Lambda^{s+\frac{\alpha}{2}}
u\|_{L^{2}}^{2}+\|\Lambda^{s+\frac{\beta}{2}}\theta\|_{L^{2}}^{2})(\tau)\,d\tau}\leq C(t,\,u_{0},\,\theta_{0}).\nonumber
\end{eqnarray}
This ends the proof of Part 1 of Theorem \ref{Th3}.
\end{proof}
\vskip .1in

\begin{center}
\textbf{Method 2}
\end{center}
\vskip .1in
Obviously, \textbf{Method 1} depends heavily on the logarithmic inequality \eqref{tNew013}. Interestingly, our \textbf{Method 2} does not require \eqref{tNew013}. The proof here is based on a variant of nonlinear lower bound for the fractional Laplacian and the direct energy method.
We now show the following estimate which is enough to propagate all the regularities.
\begin{lemma}
If $\alpha,\beta\in(0,1)$ satisfy \eqref{dfg456}, then it holds for sufficiently large $q\in[2,\infty)$
\begin{eqnarray}\label{sdghg251}
\|\omega(t)\|_{L^{q}}+\|\nabla\theta (t)\|_{L^{\beta^{-}q}} \leq
C(t,\,u_{0},\,\theta_{0}),
\end{eqnarray}
where here and below $\beta^{-}>0$ denotes $\beta^{-}=\beta-\epsilon$ for arbitrarily small $\epsilon>0$.
\end{lemma}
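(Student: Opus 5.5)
The plan is to run a coupled $L^q$ energy estimate for $\omega$ and $\nabla\theta$ in place of the $L^\infty$ maximum-principle argument of Method 1. The dissipation will be exploited through an integrated version of the nonlinear lower bound \eqref{abbdghw569}. For $f\in L^p$ and large $q$, the pointwise inequality $|f|^{q-2}f\Lambda^{\alpha}f\ge \frac1q\Lambda^\alpha|f|^q + c|f|^{q-2}\frac{|f|^{2+\frac{\alpha p}{2}}}{\|f\|_{L^p}^{\frac{\alpha p}{2}}}$ gives, after integration,
$$\int|f|^{q-2}f\Lambda^\alpha f\,dx\ge c\,\frac{\|f\|_{L^{q+\frac{\alpha p}{2}}}^{q+\frac{\alpha p}{2}}}{\|f\|_{L^p}^{\frac{\alpha p}{2}}}.$$
For $\theta$ the analogue will be applied to $f=\nabla\theta$ with $\|\theta\|_{L^\infty}$ in the denominator, as in \cite[(6.7)]{CV}:
$$\int|\nabla\theta|^{r-2}\nabla\theta\cdot\Lambda^\beta\nabla\theta\,dx\ge c\,\frac{\|\nabla\theta\|_{L^{r+\beta}}^{r+\beta}}{\|\theta_0\|_{L^\infty}^{\beta}}.$$

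\textbf{Step 1 (vorticity).} Multiply \eqref{sdhyrp56} by $|\omega|^{q-2}\omega$. Transport drops out, and with $p$ fixed as in \eqref{zfnvwrq89}, Lemma \ref{addftlem3} bounds $\|\omega\|_{L^p}$. This gives
$$\frac1q\frac{d}{dt}\|\omega\|_{L^q}^q+c\|\omega\|_{L^{q+\frac{\alpha p}{2}}}^{q+\frac{\alpha p}{2}}\le\int|\nabla\theta||\omega|^{q-1}dx\le \|\nabla\theta\|_{L^{r}}\|\omega\|_{L^{(q-1)r'}}^{q-1}.$$

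\textbf{Step 2 (temperature gradient).} Take $r=\beta^- q$. Differentiate the $\theta$ equation and multiply by $|\nabla\theta|^{r-2}\nabla\theta$. The dissipation gives $\|\nabla\theta\|_{L^{r+\beta}}^{r+\beta}$. The stretching term is bounded by $\int|\nabla u||\nabla\theta|^r\le\|\nabla u\|_{L^{s}}\|\nabla\theta\|_{L^{rs'}}^r$. Calderón--Zygmund gives $\|\nabla u\|_{L^s}\le C\|\omega\|_{L^s}$ for $1<s<\infty$, so we stay in $L^q$ spaces and never need $\|\nabla u\|_{L^\infty}$. This is why Method 2 avoids \eqref{tNew013}.

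\textbf{Step 3 (closing).} Add the two inequalities, with the $L^r$ norm of $\nabla\theta$ raised to a suitable power so that the quantities scale compatibly. Then interpolate each forcing term between the conserved or bounded norms ($\|\omega\|_{L^p}$, $\|\omega\|_{L^q}$, $\|\theta\|_{L^\infty}$, $\|\nabla\theta\|_{L^r}$) and the dissipative norms ($\|\omega\|_{L^{q+\alpha p/2}}$, $\|\nabla\theta\|_{L^{r+\beta}}$). Finally use Young's inequality to absorb the dissipative parts, which leaves a Gronwall-type inequality $Y'\le C(1+Y)$ for $Y=\|\omega\|_{L^q}^{a}+\|\nabla\theta\|_{L^{r}}^{b}$.

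\textbf{Main obstacle.} I expect the hard part to be the exponent bookkeeping in Step 3. The gain in homogeneity from the vorticity dissipation ($\tfrac{\alpha p}{2}$) and from the $\theta$ dissipation ($\beta$) has to beat the coupling: $\nabla\theta$ forces $\omega$ with one power, and $\omega$ stretches $\nabla\theta$ linearly. Matching scalings should reproduce exactly the condition $\beta>\frac{2}{2+\alpha p}$ from Method 1, i.e. $p>\frac{2(1-\beta)}{\alpha\beta}$, which \eqref{zfnvwrq89} guarantees. The choice $r=\beta^-q$ with $q\to\infty$ is what makes the Hölder exponents compatible, since the ratio of homogeneities tends to $\beta$. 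The $\epsilon$-loss in $\beta^-$ is there so the Young absorptions are strict. I would first verify the scaling in the formal limit $q=\infty$ (recovering Method 1's ODE system), then perturb to large finite $q$.
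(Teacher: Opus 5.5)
Your plan is essentially the paper's Method 2. The ingredients match: the integrated nonlinear lower bounds \eqref{tdfhb12} and \eqref{yyadkb02}, with $\|\omega\|_{L^p}$ controlled by Lemma \ref{addftlem3} and $\|\theta\|_{L^\infty}$ by the maximum principle, then adding $\|\omega\|_{L^q}^q+\|\nabla\theta\|_{L^r}^r$, H\"older/interpolation against $\|\omega\|_{L^2}$ and the dissipative norms, Young, and Gronwall with $r=(\beta-\epsilon)q$. The bookkeeping you defer is the paper's pair of constraints $\frac{r+\beta}{\beta}-\frac{\alpha p}{2}<q\le\frac{(\alpha p+2)r-\alpha p}{2}$. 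For large $q$ these leave the window $\frac{2}{2+\alpha p}<\frac{r}{q}<\beta$, which is nonempty exactly when $p>\frac{2(1-\beta)}{\alpha\beta}$, as you predicted; no extra power on $\|\nabla\theta\|_{L^r}$ is needed.
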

\begin{proof}
To start, let us recall the following generalized pointwise bound (see Appendix \ref{appSec2} for its proof)
\begin{eqnarray} \label{tdfhb12}
|f(x)|^{r-2}f(x)\Lambda^{\alpha}f(x)\geq \frac{1}{r}\Lambda^{\alpha}(|f(x)|^{r})+c\frac{|f(x)|^{r+\frac{\alpha p}{2}}}{\|f\|_{L^{p}}^{\frac{\alpha p}{2}}},
\end{eqnarray}
where $r\in (1,\infty), p\in [1,\infty)$ and $c=c(\alpha,r,p)>0$.
Multiplying \eqref{sdhyrp56} by $|\omega|^{p-2}\omega$ and using \eqref{tdfhb12}, we are able to show that
\begin{align}\label{sdghg253}
\frac{1}{q}\left\{\partial_{t}|\omega|^{q}+(u\cdot \nabla)|\omega|^{q}+\Lambda^{\alpha}|\omega|^{q}\right\}+c\frac{|\omega|^{q+\frac{\alpha p}{2}}}{\|\omega\|_{L^{p}}^{\frac{\alpha p}{2}}}\leq& \partial_{x_{1}}\theta|\omega|^{q-2}\omega.
\end{align}
Integrating \eqref{sdghg253} over $\mathbb{R}^2$ and making use of \eqref{xcyue5}, one has
\begin{align} \label{sdghg254}
\frac{d}{dt}\|\omega(t)\|_{L^{q}}^{q}+c
\|\omega\|_{L^{q+\frac{\alpha p}{2}}}^{q+\frac{\alpha p}{2}}
\leq C\left|\int_{\mathbb{R}^{2}}\partial_{x_{1}}\theta|\omega|^{q-2}\omega\,dx\right|
\triangleq \mathcal{Z}_{1},
\end{align}
where $p$ satisfies \eqref{xcyue6}.
It follows from the $\theta$-equation that
\begin{align} \label{sdghg255}
\partial_{t}\nabla\theta+(u \cdot \nabla) \nabla\theta+ \Lambda^{\beta}\nabla\theta=-(\nabla u \cdot \nabla) \theta.
\end{align}
We now recall the following pointwise bound (see Appendix \ref{appSec2} for its proof)
\begin{eqnarray} \label{yyadkb02}
|\nabla f(x)|^{r-2}\nabla f(x)\Lambda^{\beta}\nabla f(x)\geq
\frac{1}{r}\Lambda^{\beta}(|\nabla f(x)|^{r})+c\frac{|\nabla f(x)|^{r+\frac{\beta p}{p+2}}}{\|f\|_{L^{p}}^{\frac{\beta p}{p+2}}},
\end{eqnarray}
where $r\in (1,\infty), p\in [1,\infty]$ and $c=c(\alpha,r,p)>0$.
Multiplying \eqref{sdghg255} by $|\nabla\theta|^{r-2}\nabla\theta$ and taking advantage of \eqref{yyadkb02}, we infer
\begin{align}\label{sdghg256}
\frac{1}{r}\left\{\partial_{t}|\nabla\theta|^{r}+(u\cdot \nabla)|\nabla\theta|^{r}+\Lambda^{\beta}|\nabla\theta|^{r}\right\}
+c\frac{|\nabla\theta|^{r+\beta}}{\|\theta\|_{L^{\infty}}^{\beta}}\leq&-(\nabla u \cdot \nabla) \theta|\nabla\theta|^{r-2}\nabla\theta.
\end{align}
Integrating \eqref{sdghg256} over $\mathbb{R}^2$, we have
\begin{align} \label{sdghg257}
\frac{d}{dt}\|\nabla\theta(t)\|_{L^{r}}^{r}+c
\|\nabla\theta\|_{L^{r+\beta}}^{r+\beta}
\leq & C\left|\int_{\mathbb{R}^{2}}(\nabla u \cdot \nabla) \theta|\nabla\theta|^{r-2}\nabla\theta\,dx\right|\triangleq \mathcal{Z}_{2}.
\end{align}
Putting \eqref{sdghg254} and \eqref{sdghg257} together yields
\begin{align} \label{sdghg258}
\frac{d}{dt}(\|\omega(t)\|_{L^{q}}^{q}+\|\nabla\theta(t)\|_{L^{r}}^{r})+c
\|\omega\|_{L^{q+\frac{\alpha p}{2}}}^{q+\frac{\alpha p}{2}}+c
\|\nabla\theta\|_{L^{r+\beta}}^{r+\beta}
\leq \mathcal{Z}_{1}+\mathcal{Z}_{2}.
\end{align}
If we take $q$ and $r$ as
\begin{align} \label{sdghg259}
q>\frac{r+\beta}{\beta}-\frac{\alpha p}{2},
\end{align}
then we obtain
\begin{align}
\mathcal{Z}_{2}&\leq C\|\nabla u\|_{L^{\frac{r+\beta}{\beta}}}\|\nabla\theta\|_{L^{r+\beta}}^{r}
\nonumber\\&\leq C\|\omega\|_{L^{\frac{r+\beta}{\beta}}}\|\nabla\theta\|_{L^{r+\beta}}^{r}\nonumber\\&\leq C\|\omega\|_{L^{2}}^{1-\frac{(r-\beta)(2q+\alpha p)}{(r+\beta)(2q+\alpha p-4)}}\|\omega\|_{L^{q+\frac{\alpha p}{2}}}^{\frac{(r-\beta)(2q+\alpha p)}{(r+\beta)(2q+\alpha p-4)}}\|\nabla\theta\|_{L^{r+\beta}}^{r}\nonumber\\&\leq C \|\omega\|_{L^{q+\frac{\alpha p}{2}}}^{\frac{(r-\beta)(2q+\alpha p)}{(r+\beta)(2q+\alpha p-4)}}\|\nabla\theta\|_{L^{r+\beta}}^{r}
\nonumber\\&\leq C+
\frac{c}{4}
\|\omega\|_{L^{q+\frac{\alpha p}{2}}}^{q+\frac{\alpha p}{2}}+\frac{c}{4}
\|\nabla\theta\|_{L^{r+\beta}}^{r+\beta}.\nonumber
\end{align}
We further take $q$ and $r$ as
\begin{align} \label{sdghg2510}
q\leq \frac{(\alpha p+2)r-\alpha p}{2},
\end{align}
then one obtains
\begin{align}
\mathcal{Z}_{1}&\leq C\|\nabla \theta\|_{L^{r}}\|\omega\|_{L^{\frac{r(q-1)}{r-1}}}^{q-1}
\nonumber\\&\leq C \|\nabla \theta\|_{L^{r}}\|\omega\|_{L^{2}}^{q-1-\frac{[(q-3)r+2](2q+\alpha p)}{r(2q+\alpha p-4)}}\|\omega\|_{L^{q+\frac{\alpha p}{2}}}^{\frac{[(q-3)r+2](2q+\alpha p)}{r(2q+\alpha p-4)}}
\nonumber\\&\leq C \|\nabla \theta\|_{L^{r}} \|\omega\|_{L^{q+\frac{\alpha p}{2}}}^{\frac{[(q-3)r+2](2q+\alpha p)}{r(2q+\alpha p-4)}}
\nonumber\\&\leq
\frac{c}{4}
\|\omega\|_{L^{q+\frac{\alpha p}{2}}}^{q+\frac{\alpha p}{2}} +C(1+\|\nabla\theta\|_{L^{r}}^{r}).\nonumber
\end{align}
Putting the above estimates into \eqref{sdghg258} yields
\begin{align}
\frac{d}{dt}(\|\omega(t)\|_{L^{q}}^{q}+\|\nabla\theta(t)\|_{L^{r}}^{r})+c
\|\omega\|_{L^{q+\frac{\alpha p}{2}}}^{q+\frac{\alpha p}{2}}+c
\|\nabla\theta\|_{L^{r+\beta}}^{r+\beta}
\leq C(1+\|\nabla\theta\|_{L^{r}}^{r}).\nonumber
\end{align}
This together with the Gronwall inequality yields
\begin{eqnarray}\label{sdghg2511}
\|\omega(t)\|_{L^{q}}+\|\nabla\theta (t)\|_{L^{r}} \leq
C(t,\,u_{0},\,\theta_{0}),
\end{eqnarray}
where $q$ and $r$ satisfy \eqref{sdghg259} and \eqref{sdghg2510}. To ensure the workable of $q$ between \eqref{sdghg259} and \eqref{sdghg2510}, it requires
\begin{eqnarray}\label{sdghg2512}
p> \frac{2(1-\beta)}{\alpha \beta},
\end{eqnarray}
which is the same as the lower bound of $p$ stated in \eqref{zfnvwrq89}.
Noticing \eqref{sdghg2512}, we take
$$r=(\beta-\epsilon)q$$
with arbitrarily small $\epsilon\in (0,\beta)$, then there exists $q_{0}=q_{0}(\alpha,\beta,\epsilon)$ such that
$$q\geq q_{0}.$$
Therefore, we derive from \eqref{sdghg2511} that
\begin{eqnarray}
\|\omega(t)\|_{L^{q}}+\|\nabla\theta (t)\|_{L^{(\beta-\epsilon)q}} \leq
C(t,\,u_{0},\,\theta_{0}),\quad q\geq q_{0}.\nonumber
\end{eqnarray}
As a result, we complete the proof of \eqref{sdghg251}.
\end{proof}

\begin{proof}[The proof of Part 1 of Theorem \ref{Th3}]
By the standard energy method, we derive
\begin{align}\label{sdghg2513}
& \frac{1}{2}\frac{d}{dt}(\|\Lambda^{s}u(t)\|_{L^{2}}^{2}+
\|\Lambda^{s}\theta(t)\|_{L^{2}}^{2})+\|\Lambda^{s+\frac{\alpha}{2}}
u\|_{L^{2}}^{2}+\|\Lambda^{s+\frac{\beta}{2}}
\theta\|_{L^{2}}^{2}\nonumber\\
&=  \int_{\mathbb{R}^{2}}{\Lambda^{s}(\theta e_{2})
\Lambda^{s}u\,dx}-\int_{\mathbb{R}^{2}}{[\Lambda^{s},
u\cdot\nabla]u\cdot \Lambda^{s}u\,dx}-
\int_{\mathbb{R}^{2}}{[\Lambda^{s}, u\cdot\nabla]\theta
\Lambda^{s}\theta\,dx}\nonumber\\
&\triangleq L_{1}+L_{2}+L_{3}.
\end{align}
Obviously, we have
 \begin{eqnarray}
|L_{1}|\leq \|\Lambda^{s}\theta\|_{L^{2}}\|\Lambda^{s}u\|_{L^{2}} \leq C(\|\Lambda^{s}u\|_{L^{2}}^{2}+
\|\Lambda^{s}\theta\|_{L^{2}}^{2}).\nonumber
\end{eqnarray}
By the classical commutator estimate, it follows that
\begin{align}
|L_{2}| \leq&  \|[\Lambda^{s},
u\cdot\nabla]u\|_{L^{\frac{2q}{q+1}}}\|\Lambda^{s}u\|_{L^{\frac{2q}{q-1}}}
\nonumber\\  \leq& C
\|\nabla u\|_{L^{q}}\|\Lambda^{s}u\|_{L^{\frac{2q}{q-1}}}^{2}
\nonumber\\  \leq& C
\|\omega\|_{L^{q}}\|\Lambda^{s}u\|_{L^{2}}^{2-\frac{4}{\alpha q}} \|\Lambda^{s+\frac{\alpha}{2}}u\|_{L^{2}}^{\frac{4}{\alpha q}}
\nonumber\\  \leq&
\frac{1}{4}\|\Lambda^{s+\frac{\alpha}{2}}
u\|_{L^{2}}^{2}+C\|\omega\|_{L^{q}}^{\frac{\alpha q}{\alpha q-2}}\|\Lambda^{s}u\|_{L^{2}}^{2},\nonumber
\end{align}
where $q>\frac{2}{\alpha}$. Similarly, one also gets
\begin{align}
|L_{3}| \leq&  \|[\Lambda^{s},
u\cdot\nabla]\theta\|_{L^{\frac{2q}{q+1}}}\|\Lambda^{s}\theta\|_{L^{\frac{2q}{q-1}}}
\nonumber\\  \leq& C
\left(\|\nabla u\|_{L^{q}}\|\Lambda^{s}\theta\|_{L^{\frac{2q}{q-1}}}+
\|\nabla \theta\|_{L^{\beta^{-}q}}\|\Lambda^{s}u\|_{L^{\frac{2\beta^{-}
q}{\beta^{-}(q+1)-2}}}\right)\|\Lambda^{s}\theta\|_{L^{\frac{2q}{q-1}}}
\nonumber\\  \leq& C
\|\omega\|_{L^{q}}\|\Lambda^{s}\theta\|_{L^{\frac{2q}{q-1}}}^{2} +C\|\nabla \theta\|_{L^{\beta^{-}q}}\|\Lambda^{s}u\|_{L^{\frac{2\beta^{-}
q}{\beta^{-}(q+1)-2}}} \|\Lambda^{s}\theta\|_{L^{\frac{2q}{q-1}}}
\nonumber\\  \leq& C
\|\omega\|_{L^{q}}\|\Lambda^{s}\theta\|_{L^{2}}^{2-\frac{4}{\beta q}} \|\Lambda^{s+\frac{\beta}{2}}\theta\|_{L^{2}}^{\frac{4}{\beta q}}\nonumber\\&+
C
\|\nabla \theta\|_{L^{\beta^{-}q}}
\|\Lambda^{s}u\|_{L^{2}}^{1-\frac{2(2-\beta^{-})}
{\alpha\beta^{-}q}} \|\Lambda^{s+\frac{\alpha}{2}}u\|_{L^{2}}^{\frac{2(2-\beta^{-})}
{\alpha\beta^{-}q}}
\|\Lambda^{s}\theta\|_{L^{2}}^{1-\frac{2}{\beta q}} \|\Lambda^{s+\frac{\beta}{2}}\theta\|_{L^{2}}^{\frac{2}{\beta q}}
\nonumber\\  \leq&
\frac{1}{4}\|\Lambda^{s+\frac{\alpha}{2}}
u\|_{L^{2}}^{2}+\frac{1}{2}\|\Lambda^{s+\frac{\beta}{2}}
\theta\|_{L^{2}}^{2}\nonumber\\&+C\left(\|\omega\|_{L^{q}}^{\frac{\beta q}{\beta q-2}}+\|\nabla \theta\|_{L^{\beta^{-}q}}^{\frac{\alpha\beta\beta^{-}q}{\alpha\beta^{-}(\beta q-
1)-(2-\beta^{-})\beta}}\right)(\|\Lambda^{s}u\|_{L^{2}}^{2}+
\|\Lambda^{s}\theta\|_{L^{2}}^{2}),\nonumber
\end{align}
where $q$ further satisfies
$$q>\max\left\{\frac{2}{\beta},\quad \frac{2(2-\beta^{-})}{\alpha\beta^{-}}\right\}.$$
Putting all the above estimates into \eqref{sdghg2513} shows
\begin{align} \label{sdghg2514}
&  \frac{d}{dt}(\|\Lambda^{s}u(t)\|_{L^{2}}^{2}+
\|\Lambda^{s}\theta(t)\|_{L^{2}}^{2})+\|\Lambda^{s+\frac{\alpha}{2}}
u\|_{L^{2}}^{2}+\|\Lambda^{s+\frac{\beta}{2}}
\theta\|_{L^{2}}^{2}\nonumber\\
&\leq C\left(\|\omega\|_{L^{q}}^{\frac{\alpha q}{\alpha q-2}}+\|\omega\|_{L^{q}}^{\frac{\beta q}{\beta q-2}}++\|\nabla \theta\|_{L^{\beta^{-}q}}^{\frac{\alpha\beta\beta^{-}q}{\alpha\beta^{-}(\beta q-
1)-(2-\beta^{-})\beta}}\right)(\|\Lambda^{s}u\|_{L^{2}}^{2}+
\|\Lambda^{s}\theta\|_{L^{2}}^{2}),
\end{align}
here $q$ satisfies
$$q>\max\left\{\frac{2}{\alpha},\quad \frac{2}{\beta},\quad \frac{2(2-\beta^{-})}{\alpha\beta^{-}}\right\}.$$
Using \eqref{sdghg251} and applying the Gronwall inequality to \eqref{sdghg2514}, we thus obtain
\begin{eqnarray}
\|\Lambda^{s}u(t)\|_{L^{2}}^{2}+
\|\Lambda^{s}\theta(t)\|_{L^{2}}^{2}+\int_{0}^{t}{(\|\Lambda^{s+\frac{\alpha}{2}}
u\|_{L^{2}}^{2}+\|\Lambda^{s+\frac{\beta}{2}}\theta\|_{L^{2}}^{2})(\tau)\,d\tau}\leq C(t,\,u_{0},\,\theta_{0}).\nonumber
\end{eqnarray}
This completes the proof of Part 1 of Theorem \ref{Th3}.
\end{proof}

\vskip .2in
\subsection{The proof of Part 2}
This subsection is devoted to the proof of $\alpha$ and $\beta$ satisfying \eqref{dfgp66}. Here we point out that all the estimates obtained of
$\mbox{Part 2}$ are valid for all $\beta>\max\{\frac{4-6\alpha}{\alpha},\ \frac{2-\alpha-2\alpha^{2}}{2\alpha}\}$ with $\alpha\leq\frac{2}{3}$. In this case (mainly the case $\alpha>\beta$), we still expect that the following combined quantity $G$ (see \cite{JMWZ,MX,SW,SWXY,wuxuxueye,Yemmas}) plays a certain role in deriving the regularity of the solution
$$
G=\omega-\mathcal {R}_{\alpha}\theta,\qquad \mathcal {R}_{\alpha}\triangleq\partial_{x_{1}}\Lambda^{-\alpha}.
$$
It follows from \eqref{sdhyrp56} and the $\theta$-equation that
\begin{eqnarray}\label{t305}
\partial_{t}G+(u\cdot\nabla)G+\Lambda^{\alpha}G=[\mathcal {R}_{\alpha},\,u\cdot\nabla]\theta+\Lambda^{\beta-\alpha}\partial_{x_{1}}\theta.
\end{eqnarray}
By the Biot-Savart law, we have
\begin{eqnarray*}
u=\nabla^{\perp}\Delta^{-1}\omega
=\nabla^{\perp}\Delta^{-1}(G+\mathcal {R}_{\alpha}\theta)=\nabla^{\perp}\Delta^{-1}G+\nabla^{\perp}\Delta^{-1}\mathcal {R}_{\alpha}\theta\triangleq u_{G}+u_{\theta}.
\end{eqnarray*}
To handle the term $[\mathcal {R}_{\alpha},\,u\cdot\nabla]\theta$ at the righthand side of \eqref{t305}, we need the following commutator estimate (see \cite[Lemma 4.1]{SWXY}).
\begin{lemma}
Let $r\in[1,\infty]$ and $p,\,p_{1},\,p_{2}\in (1, \infty)$ satisfy $\frac{1}{p}=\frac{1}{p_{1}}+\frac{1}{p_{2}}$. Assume that $\epsilon\in[0,1)$ and $\alpha\in (0,1)$ satisfy $s\in(-1, \alpha-\epsilon)$. If $f$ is a divergence-free vector field, then it holds true
\begin{align}\label{uk3rtr03}
\|[\mathcal{R}_{\alpha},f\cdot\nabla]g\|_{\dot{B}_{p,r}^{s}}\leq
C \|\Lambda^{1-\epsilon} f\|_{L^{p_{1}}}\|g\|_{
{\dot{B}}_{p_{2},r}^{s+1+\epsilon-\alpha}}.
\end{align}
\end{lemma}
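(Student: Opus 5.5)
The plan is to reduce \eqref{uk3rtr03} to a Littlewood--Paley/paraproduct analysis of the commutator. We decompose $f\cdot\nabla g=\sum_j\partial_j(f_j g)$ using $\nabla\cdot f=0$, and write Bony's decomposition
$$[\mathcal R_\alpha,f\cdot\nabla]g=\sum_{q}[\mathcal R_\alpha,S_{q-1}f\cdot\nabla]\Delta_q g+\sum_q[\mathcal R_\alpha,\Delta_q f\cdot\nabla]S_{q-1}g+\sum_{|q-q'|\le1}[\mathcal R_\alpha,\Delta_q f\cdot\nabla]\Delta_{q'}g,$$
where $\Delta_q$ are homogeneous dyadic blocks and $S_{q-1}=\sum_{q'\le q-2}\Delta_{q'}$. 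We then apply $\Delta_k$, estimate each piece in $L^p$, multiply by $2^{ks}$ and take the $\ell^r$ norm in $k$. The symbol of $\mathcal R_\alpha$ is $i\xi_1|\xi|^{-\alpha}$, which is smooth and homogeneous of degree $1-\alpha$ away from the origin.

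\textbf{Low--high term.} For each $q$, the commutator $[\mathcal R_\alpha,S_{q-1}f\cdot\nabla]\Delta_q g$ has frequency $\sim2^q$. We write it through the kernel $h_q$ of $\mathcal R_\alpha\widetilde\Delta_q$ and use the first-order Taylor formula, as in the classical Calder\'on commutator estimate:
$$\Big|\int h_q(x-y)\big(S_{q-1}f(x)-S_{q-1}f(y)\big)\cdot\nabla\Delta_q g(y)\,dy\Big|.$$
Since $\|\,|x|h_q\|_{L^1}\lesssim 2^{q(1-\alpha)}2^{-q}$, this is bounded by $2^{-q\alpha}\|\nabla S_{q-1}f\|_{L^{p_1}}\|\nabla\Delta_q g\|_{L^{p_2}}$. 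Bernstein's inequality gives $\|\nabla S_{q-1} f\|_{L^{p_1}}\lesssim 2^{q\epsilon}\|\Lambda^{1-\epsilon}f\|_{L^{p_1}}$; this uses $\epsilon\ge0$, with the low-frequency sum $\sum_{q'\le q}2^{q'\epsilon}$ converging when $\epsilon>0$. For $\epsilon=0$ we instead use $\nabla S_{q-1}f$ bounded in $L^{p_1}$ by $\|\nabla f\|_{L^{p_1}}$ via Riesz transforms, since $p_1\in(1,\infty)$. The result is $2^{q(1+\epsilon-\alpha)}\|\Lambda^{1-\epsilon}f\|_{L^{p_1}}\|\Delta_q g\|_{L^{p_2}}$, which matches $\dot B^{s+1+\epsilon-\alpha}_{p_2,r}$ with no restriction on $s$.

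\textbf{High--low term.} Here we do not commute; the two pieces are estimated separately. For the piece $\Delta_q f\cdot\nabla\mathcal R_\alpha S_{q-1}g$, we have $\|\Delta_q f\|_{L^{p_1}}\lesssim2^{-q(1-\epsilon)}\|\Lambda^{1-\epsilon}f\|_{L^{p_1}}$, and $\|\nabla \mathcal R_\alpha S_{q-1}g\|_{L^{p_2}}\lesssim\sum_{q'\le q-2}2^{q'(2-\alpha)}\|\Delta_{q'}g\|_{L^{p_2}}$. Writing $2^{q'(2-\alpha)}=2^{q'(1-\epsilon-s)}2^{q'(s+1+\epsilon-\alpha)}$, the convolution in $q'$ is summable in $\ell^r$ after multiplying by $2^{qs}2^{-q(1-\epsilon)}$ exactly when $1-\epsilon-s>0$, that is, $s<1-\epsilon$. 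The other piece, $\mathcal R_\alpha(\Delta_q f\cdot\nabla S_{q-1}g)$, gives an extra factor $2^{q(1-\alpha)}$ and the exponent $1-\epsilon-s-(1-\alpha)$; it requires $s<\alpha-\epsilon$. This is where the upper bound in the hypothesis enters.

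\textbf{High--high term.} After localization to $\Delta_k$, only $q\ge k-3$ contribute. We use divergence-freeness to write the term as $\Delta_k\partial_j(\cdot)$, at cost $2^k$, and again treat $\mathcal R_\alpha$ and the product separately. The sum is
$$\sum_{q\ge k-3}2^{k(s+1)}2^{-q(1-\epsilon)}2^{-q(s+1+\epsilon-\alpha)}\,c_q,\qquad (c_q)\in\ell^r,$$
together with the analogous term carrying $2^{k(1-\alpha)}$ instead of $2^{q(1-\alpha)}$. The first converges when $s+1>0$, i.e.\ $s>-1$. The second needs $s+1+1-\alpha-(2-\alpha)<\ldots$, which is again $s>-1$ after bookkeeping.

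I expect the main obstacle to be the bookkeeping in the high--low term and the endpoint $\epsilon=0$. The constraint $s<\alpha-\epsilon$ has to come out exactly, and the $L^{p_1}$ control of $\nabla S_{q-1}f$ without a derivative loss relies on $1<p_1<\infty$. The case $r=\infty$ is handled identically by Young's inequality for series.
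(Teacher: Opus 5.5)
The paper gives no proof of this lemma; it quotes it from \cite[Lemma 4.1]{SWXY}. Your argument is the standard Bony-paraproduct proof of such commutator bounds: a kernel/Taylor estimate for the low--high commutator, no commutation in the high--low and high--high pieces, and $\nabla\cdot f=0$ in the remainder. It is correct, and you locate the hypotheses properly: $s<\alpha-\epsilon$ comes from $\mathcal R_\alpha(\Delta_q f\cdot\nabla S_{q-1}g)$, and $s>-1$ comes from $\nabla\cdot(\Delta_q f\,\mathcal R_\alpha\Delta_{q'}g)$.

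There is one slip in the high--high paragraph. Your display omits the factor $2^{q(1-\alpha)}$ coming from $\mathcal R_\alpha\Delta_{q'}g$. As written, the summand would not be bounded uniformly in $k$; with the factor restored it is $2^{(k-q)(s+1)}c_q$, which is what your conclusion assumes. The other high--high piece gives $2^{(k-q)(s+2-\alpha)}c_q$, so it only needs the weaker condition $s>\alpha-2$, not $s>-1$ as you state; this is harmless.
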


\begin{rem}
After checking the proof of \cite[Lemma 4.1]{SWXY}, it still holds
\begin{align}\label{xcdksdr88}
\|[\mathcal{R}_{\alpha},f\cdot\nabla]g\|_{\dot{B}_{\infty,1}^{0}}\leq
C \|\nabla f\|_{L^{\infty}}\|g\|_{
{\dot{B}}_{\infty,1}^{1-\alpha}}.
\end{align}
\end{rem}

\vskip .1in
Based on \eqref{t305} and \eqref{uk3rtr03}, we are able to show the following estimate.
\begin{lemma}
If $\beta>\max\{1-\alpha,\ \frac{4-6\alpha}{\alpha} \}$ with $\alpha\leq \frac{2}{3}$,
then it holds
\begin{eqnarray}\label{sfdsjkp2}
\|G(t)\|_{L^{2}}^{2}+\|\Lambda^{\frac{(2+\alpha)\beta}{4}}\theta(t)\|_{L^{2}}^{2}
+\int_{0}^{t}{( \|\Lambda^{\frac{\alpha}{2}}G \|_{L^{2}}^{2}+
\|\Lambda^{\frac{(4+\alpha)\beta}{4}}\theta\|_{L^{2}}^{2})(\tau)\,d\tau}\leq
C(t,u_{0},\theta_{0}).
\end{eqnarray}
\end{lemma}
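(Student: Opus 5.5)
The plan is to mimic the proof of Lemma \ref{addftlem1}, with $\omega$ replaced by $G$. I take the $L^2$ inner product of \eqref{t305} with $G$ and add $\eta$ times the identity \eqref{ksdh95} for $\Lambda^{\frac{(2+\alpha)\beta}{4}}\theta$. This gives
\begin{align*}
&\frac12\frac{d}{dt}\Big(\|G\|_{L^2}^2+\eta\|\Lambda^{\frac{(2+\alpha)\beta}{4}}\theta\|_{L^2}^2\Big)+\|\Lambda^{\frac{\alpha}{2}}G\|_{L^2}^2+\eta\|\Lambda^{\frac{(4+\alpha)\beta}{4}}\theta\|_{L^2}^2\\
&=\int_{\mathbb{R}^2}\Lambda^{\beta-\alpha}\partial_{x_1}\theta\,G\,dx+\int_{\mathbb{R}^2}[\mathcal{R}_\alpha,u\cdot\nabla]\theta\,G\,dx-\eta\int_{\mathbb{R}^2}[\Lambda^{\frac{(2+\alpha)\beta}{4}},u\cdot\nabla]\theta\,\Lambda^{\frac{(2+\alpha)\beta}{4}}\theta\,dx .
\end{align*}
I will show that each term on the right is absorbed by the dissipation, up to a Gronwall-type remainder. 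The remainder will be controlled by $\|\theta\|_{L^2}$, $\|\theta\|_{L^\infty}$ and the integrable quantity $\|u\|_{L^2}\|\Lambda^{\frac\alpha2}u\|_{L^2}$ from the energy lemma, multiplied by $\|G\|_{L^2}^2+\|\Lambda^{\frac{(2+\alpha)\beta}{4}}\theta\|_{L^2}^2$.

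\textbf{Linear term.} Moving $\Lambda^{\frac\alpha2}$ onto $G$ gives
$$\Big|\int\Lambda^{\beta-\alpha}\partial_{x_1}\theta\,G\,dx\Big|\le \|\Lambda^{\frac\alpha2}G\|_{L^2}\|\Lambda^{1+\beta-\frac{3\alpha}{2}}\theta\|_{L^2}.$$
The order $1+\beta-\frac{3\alpha}{2}$ does not exceed $\frac{(4+\alpha)\beta}{4}$ exactly when $\beta\ge\frac{4-6\alpha}{\alpha}$. Under the strict inequality I interpolate between $\|\theta\|_{L^2}$ and $\|\Lambda^{\frac{(4+\alpha)\beta}{4}}\theta\|_{L^2}$ and apply Young, as in \eqref{ddklp1}. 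This is the only place the hypothesis $\beta>\frac{4-6\alpha}{\alpha}$ enters.

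\textbf{Transport commutator for $\theta$.} For the $\eta$-term I repeat \eqref{dfgjkpw8} and \eqref{bmnhgf2}. The only new point is $\|\nabla u\|_{L^{\frac{4+\alpha}{2}}}$, which I bound using $\omega=G+\mathcal{R}_\alpha\theta$:
$$\|\nabla u\|_{L^{\frac{4+\alpha}{2}}}\le C\|G\|_{L^{\frac{4+\alpha}{2}}}+C\|\Lambda^{1-\alpha}\theta\|_{L^{\frac{4+\alpha}{2}}}.$$
The $G$ part is treated like \eqref{bmnhgf1}, interpolating through $\|u\|_{L^2}$, $\|\Lambda^{\frac\alpha2}u\|_{L^2}$, $\|G\|_{L^2}$ and $\|\Lambda^{\frac\alpha2}G\|_{L^2}$; here $\|\Lambda^{\frac\alpha2}u\|_{L^2}$ is the time-integrable energy-dissipation quantity from the energy lemma. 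The $\theta$ part is handled by Sobolev embedding and interpolation between $L^2$ and $\dot H^{\frac{(4+\alpha)\beta}{4}}$. This requires $1-\alpha+\frac{2\alpha}{4+\alpha}$ to be below $\frac{(4+\alpha)\beta}{4}$, which I expect follows from $\beta>1-\alpha$ and $\alpha\le\frac23$. With this bound the $\theta$-commutator term is again sublinear in the dissipative quantities, and Young's inequality absorbs it.

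\textbf{Main obstacle: the commutator in the $G$ equation.} I put it in duality $\dot H^{-\frac\alpha2}\times\dot H^{\frac\alpha2}$ and apply \eqref{uk3rtr03} with $s=-\frac\alpha2$, $r=2$ and $\epsilon=0$ (or $\epsilon$ small if needed):
$$\Big|\int[\mathcal{R}_\alpha,u\cdot\nabla]\theta\,G\Big|\le C\|\Lambda^{\frac\alpha2}G\|_{L^2}\,\|\nabla u\|_{L^{p_1}}\,\|\theta\|_{\dot B^{1-\frac{3\alpha}{2}}_{p_2,2}},\qquad \tfrac12=\tfrac1{p_1}+\tfrac1{p_2}.$$
\begin{itemize}
\item The factor $\|\nabla u\|_{L^{p_1}}$ is split as above into a $G$ part and a $\Lambda^{1-\alpha}\theta$ part.
\item If $1-\frac{3\alpha}{2}\le 0$, I bound the $\theta$ factor by the sharp interpolation \eqref{dfexbg6}, between $\|\theta\|_{L^\infty}$ and $\|\Lambda^{\frac{(4+\alpha)\beta}{4}}\theta\|_{L^2}$, with $p_2$ large.
\item If $1-\frac{3\alpha}{2}>0$, I instead interpolate between $L^2$ and $\dot H^{\frac{(4+\alpha)\beta}{4}}$.
\end{itemize}
The difficulty is purely exponent counting. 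I must choose $p_1$ and $p_2$ so that the total power of $\|\Lambda^{\frac\alpha2}G\|_{L^2}$ and $\|\Lambda^{\frac{(4+\alpha)\beta}{4}}\theta\|_{L^2}$ is strictly less than $2$, while the remaining powers multiply $\|G\|_{L^2}^2$ by a time-integrable coefficient. I expect this to close precisely under $\beta>1-\alpha$ (subcriticality) together with $\alpha\le\frac23$. If it fails for $\epsilon=0$, I will use $\epsilon>0$ to move derivatives from $\theta$ onto $u$.

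With all terms absorbed, Gronwall's inequality yields \eqref{sfdsjkp2}.
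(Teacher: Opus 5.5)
Your architecture matches the paper's: you test \eqref{t305} with $G$ and add the $\dot H^{\frac{(2+\alpha)\beta}{4}}$ estimate of $\theta$. Your linear term is exactly the paper's $A_3$. The gap is in every term involving $u_G$, namely $\int[\mathcal{R}_\alpha,u_G\cdot\nabla]\theta\,G\,dx$ and the $G$-part of $\nabla u$ in the $\theta$-commutator.

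These terms are bilinear in $G$. If $G$ is only interpolated between $\|G\|_{L^2}$ and $\|\Lambda^{\frac\alpha2}G\|_{L^2}$, Young's inequality leaves superquadratic powers of $\|G\|_{L^2}$, and Gronwall fails. For example, $\|G\|_{L^{\frac{4+\alpha}{2}}}\le C\|G\|_{L^2}^{\frac{2+\alpha}{4+\alpha}}\|\Lambda^{\frac\alpha2}G\|_{L^2}^{\frac{2}{4+\alpha}}$, inserted in the $\theta$-commutator, leaves $\|G\|_{L^2}^{2+\alpha}$.

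Your fix, ``interpolating through $\|u\|_{L^2}$ and $\|\Lambda^{\frac\alpha2}u\|_{L^2}$,'' silently replaces $u_G$ by $u$. What is actually needed is the negative-order quantity $\|\Lambda^{\frac\alpha2}u_G\|_{L^2}=\|\Lambda^{\frac\alpha2-1}G\|_{L^2}\le\|\Lambda^{\frac\alpha2}u\|_{L^2}+C\|\Lambda^{-\frac\alpha2}\theta\|_{L^2}$. The last term is not controlled by the energy lemma. The paper handles it in \eqref{sdfghp28} by Hardy--Littlewood--Sobolev and the extra hypothesis $\theta_0\in L^{\frac{4}{2+\alpha}}$, which your proposal never uses. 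A low/high-frequency splitting of $G$ could replace that hypothesis, but some such idea is indispensable.

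The same issue defeats your claim that the main commutator is ``purely exponent counting'' closing under $\beta>1-\alpha$. Suppose you use \eqref{uk3rtr03}, \eqref{dfexbg6}, only $\|G\|_{L^2}$ and $\|\Lambda^{\frac\alpha2}G\|_{L^2}$, and the time-integrable $\|\Lambda^{\frac\beta2}\theta\|_{L^2}^2$. Bringing in $\|\Lambda^{\frac{(2+\alpha)\beta}{4}}\theta\|_{L^2}$ or the $\theta$-dissipation instead again produces superquadratic Gronwall terms. Then the best the count gives is $\beta>\frac{(2-3\alpha)(2+\alpha)}{\alpha}$, which is the paper's first case. This is strictly stronger than the hypothesis whenever $\alpha<\frac{\sqrt{57}-5}{4}$. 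For instance, at $\alpha=0.62$ the lemma asks only $\beta>0.4516\ldots$, while the count needs $\beta>0.5916\ldots$.

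Taking $\epsilon>0$ does not rescue this. In \eqref{uk3rtr03}, $\epsilon$ moves derivatives from $u$ onto $\theta$, not the reverse, and the threshold only gets worse.

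The paper's second case interpolates $\|G\|_{L^p}$ using $\|\Lambda^{\frac\alpha2-1}G\|_{L^2}$. This lowers the power of $\|\Lambda^{\frac\alpha2}G\|_{L^2}$ enough to spend part of the $\theta$-dissipation. The condition $\kappa\in(0,1)$ there is exactly $\beta>\frac{4-6\alpha}{\alpha}$, so the linear term is not the only place this hypothesis enters.

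A smaller point concerns the $\Lambda^{1-\alpha}\theta$ part of the $\theta$-commutator. Sobolev embedding into $L^{\frac{4+\alpha}{2}}$ costs $\frac{\alpha}{4+\alpha}$ derivatives. Since $\|\Lambda^{\frac{(4+\alpha)\beta}{4}}\theta\|_{L^2}^{\frac{4+2\alpha}{4+\alpha}}$ is already present, closing then requires $\beta>1-\alpha+\frac{\alpha}{4+\alpha}$, which the hypotheses do not imply. Interpolating against $\|\theta\|_{L^\infty}$ via \eqref{dfexbg6}, as in the paper's $A_{42}$, closes under $\beta>1-\alpha$.
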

\begin{proof}
Testing (\ref{t305}) by $G$ and integrating in the space variable, we have
\begin{align}\label{AZyyy1}
\frac{1}{2}\frac{d}{dt}\|G(t)\|_{L^{2}}^{2}
+\|\Lambda^{\frac{\alpha}{2}}G\|_{L^{2}}^{2} =&\int_{\mathbb{R}^{2}}
{[\mathcal
{R}_{\alpha},\,u\cdot\nabla]\theta\,\,G\,dx}+\int_{\mathbb{R}^{2}}
{\Lambda^{\beta-\alpha}\partial_{x_{1}}\theta\,\, G\,dx}\nonumber\\ =&\int_{\mathbb{R}^{2}}
{[\mathcal
{R}_{\alpha},\,u_{G}\cdot\nabla]\theta\,\,G\,dx}+\int_{\mathbb{R}^{2}}
{[\mathcal
{R}_{\alpha},\,u_{\theta}\cdot\nabla]\theta\,\,G\,dx}\nonumber\\&
+\int_{\mathbb{R}^{2}}
{\Lambda^{\beta-\alpha}\partial_{x_{1}}\theta\,\,G\,dx}.
\end{align}
Combining \eqref{AZyyy1} and \eqref{ksdh95}, we have
\begin{align} \label{vbmk445}
&\frac{1}{2}\frac{d}{dt}\left(\|G(t)\|_{L^{2}}^{2}+ \|\Lambda^{\frac{(2+\alpha)\beta}{4}}\theta(t)\|_{L^{2}}^{2}\right)
+ \|\Lambda^{\frac{\alpha}{2}}G \|_{L^{2}}^{2}+\|\Lambda^{\frac{(4+\alpha)\beta}{4}}\theta\|_{L^{2}}^{2}
\nonumber\\ &=\int_{\mathbb{R}^{2}}
{[\mathcal
{R}_{\alpha},\,u_{G}\cdot\nabla]\theta\,\,G\,dx}+\int_{\mathbb{R}^{2}}
{[\mathcal
{R}_{\alpha},\,u_{\theta}\cdot\nabla]\theta\,\,G\,dx}
+\int_{\mathbb{R}^{2}}
{\Lambda^{\beta-\alpha}\partial_{x_{1}}\theta\,\,G\,dx} \nonumber\\& \quad-  \int_{\mathbb{R}^{2}}
[\Lambda^{\frac{(2+\alpha)\beta}{4}}, u \cdot \nabla]\theta\,\,\Lambda^{\frac{(2+\alpha)\beta}{4}}\theta\,dx
\nonumber\\ &\triangleq \sum_{i=1}^{4}A_{i}.
\end{align}
As a start, we handle the term $A_{1}$ which is divided into two cases, namely
$$\beta>\frac{4-4\alpha-3\alpha^{2}}{\alpha}$$
and
$$\frac{4-6\alpha}{\alpha}<\beta\leq\frac{4-4\alpha-3\alpha^{2}}{\alpha}.$$
We begin with the former case.
In view of \eqref{uk3rtr03} and \eqref{dfexbg6}, we derive for any sufficiently small $\epsilon>0$ that
\begin{align}\label{asfbmu12}
A_{1}&\leq C\|[\mathcal{R}_{\alpha},u_{G}\cdot\nabla]\theta
\|_{\dot{B}_{2,2}^{-\frac{\alpha}{2}}}\|G\|_{\dot{B}_{2,2}^{\frac{\alpha}{2}}}
\nonumber\\&\leq C \|\Lambda u_{G}\|_{L^{p}}\|\theta\|_{
{\dot{B}}_{\frac{2p}{p-2},2}^{1-\frac{3\alpha}{2}}} \|\Lambda^{\frac{\alpha}{2}}G\|_{L^{2}}
\nonumber\\&\leq C \|G\|_{L^{p}}(\|\theta\|_{L^{2}}+\|\Lambda^{1-\frac{3\alpha}{2}+\epsilon}\theta\|_{L^{\frac{2p}{p-2}}})
\|\Lambda^{\frac{\alpha}{2}}G\|_{L^{2}}  \nonumber
 \\&\leq C \|G\|_{L^{2}}^{1-\frac{2(p-2)}{\alpha p}}\|\Lambda^{\frac{\alpha}{2}}G\|_{L^{2}}^{\frac{2(p-2)}{\alpha p}}\left(1+\|\Lambda^{1-\frac{3\alpha}{2}
+\epsilon}\theta\|_{\dot{B}_{\infty,\infty}^{-(1-\frac{3\alpha}{2}+\epsilon)}}
^{\frac{2}{p}}
\|\Lambda^{1-\frac{3\alpha}{2}
+\epsilon}\theta\|_{\dot{B}_{2,2}^{\frac{\beta}{2}-(1-\frac{3\alpha}{2}+\epsilon)}}
^{\frac{p-2}{p}}
\right)\nonumber\\&\quad\times
\|\Lambda^{\frac{\alpha}{2}}G\|_{L^{2}}
\nonumber\\&\leq C \|G\|_{L^{2}}^{1-\frac{2(p-2)}{\alpha p}}\|\Lambda^{\frac{\alpha}{2}}G\|_{L^{2}}^{1+\frac{2(p-2)}{\alpha p}}\left(1+\|\theta\|_{L^{\infty}}
^{\frac{2}{p}}
\|\Lambda^{\frac{\beta}{2}
}\theta\|_{L^{2}}
^{\frac{p-2}{p}}
\right)\nonumber\\&\leq C \|G\|_{L^{2}}^{1-\frac{2(p-2)}{\alpha p}}\|\Lambda^{\frac{\alpha}{2}}G\|_{L^{2}}^{1+\frac{2(p-2)}{\alpha p}}\left(1+
\|\Lambda^{\frac{\beta}{2}
}\theta\|_{L^{2}}
^{\frac{p-2}{p}}
\right)\nonumber\\
&\leq
\frac{1}{16}\|\Lambda^{\frac{\alpha}{2}}G\|_{L^{2}}^{2}+C(1+\|\Lambda^{\frac{\beta}{2}
}\theta\|_{L^{2}}^{2})(1+\|G\|_{L^{2}}^{2}),
\end{align}
where
\begin{align} \label{asdsfb89}
p=2+\frac{2(2-3\alpha+2\epsilon)}{3\alpha+\beta-2-2\epsilon}\leq 2+\alpha.
\end{align}
According to the arbitrariness of $\epsilon>0$, we deduce from \eqref{asdsfb89} that
\begin{align}
\beta>\frac{4-4\alpha-3\alpha^{2}}{\alpha}.\nonumber
\end{align}
The later case is a bit more tricky. To this end, we take
$$\kappa=\frac{2[(2+\alpha)(2-3\alpha+2\epsilon)-\alpha\beta]}{\alpha(2+\alpha)
\beta+4(3\alpha-2-2\epsilon)}\in (0,1),$$
$$\widetilde{s}=\frac{\beta}{2}+\frac{(2+\alpha)\beta\kappa}{4},\quad p=\frac{4\widetilde{s}}{2\widetilde{s}+3\alpha-2-2\epsilon}$$
for any sufficiently small $\epsilon>0$, then via direct computations we obtain
$$p=2+\frac{\alpha}{1+\kappa}.$$
In view of the above fixed $\kappa, \widetilde{s}$ and $p$, we deduce from \eqref{uk3rtr03} and \eqref{dfexbg6} that
\begin{align}\label{addsry897}
A_{1}&\leq C\|[\mathcal{R}_{\alpha},u_{G}\cdot\nabla]\theta
\|_{\dot{B}_{2,2}^{-\frac{\alpha}{2}}}\|G\|_{\dot{B}_{2,2}^{\frac{\alpha}{2}}}
\nonumber\\&\leq C \|\Lambda u_{G}\|_{L^{p}}\|\theta\|_{
{\dot{B}}_{\frac{2p}{p-2},2}^{1-\frac{3\alpha}{2}}} \|\Lambda^{\frac{\alpha}{2}}G\|_{L^{2}}
\nonumber\\&\leq  C \|G\|_{L^{p}}(\|\theta\|_{L^{2}}+\|\Lambda^{1-\frac{3\alpha}{2}+\epsilon}\theta\|_{L^{\frac{2p}{p-2}}})
\|\Lambda^{\frac{\alpha}{2}}G\|_{L^{2}}
\nonumber\\&\leq C
\|\Lambda^{1-\frac{2}{p}}G\|_{L^{2}}(\|\theta\|_{L^{2}}+\|\Lambda^{1-\frac{3\alpha}{2}+\epsilon}\theta\|_{L^{\frac{2p}{p-2}}})
\|\Lambda^{\frac{\alpha}{2}}G\|_{L^{2}}
\nonumber\\&\leq C\|\Lambda^{\frac{\alpha}{2}-1}G\|_{L^2}^{\frac{(p-2)\kappa}{p}} \|\Lambda^{\frac{\alpha[2-(p-2)\kappa]}{2[p-(p-2)\kappa]}}G\|_{L^{2}}
^{1-\frac{(p-2)\kappa}{ p}}\nonumber\\&\quad\times\left(1+\|\Lambda^{1-\frac{3\alpha}{2}
+\epsilon}\theta\|_{\dot{B}_{\infty,\infty}^{-(1-\frac{3\alpha}{2}+\epsilon)}}
^{\frac{2}{p}}
\|\Lambda^{1-\frac{3\alpha}{2}
+\epsilon}\theta\|_{\dot{B}_{2,2}^{\widetilde{s}-(1-\frac{3\alpha}{2}+\epsilon)}}
^{\frac{p-2}{p}}
\right)
\|\Lambda^{\frac{\alpha}{2}}G\|_{L^{2}}
\nonumber\\&\leq C\|\Lambda^{\frac{\alpha}{2}-1}G\|_{L^2}^{\frac{(p-2)\kappa}{p}} \|G\|_{L^{2}}^{\frac{p-2}{ p}}\|\Lambda^{\frac{\alpha}{2}}G\|_{L^{2}}^{\frac{2-(p-2)\kappa}{ p}}\left(1+\|\theta\|_{L^{\infty}}
^{\frac{2}{p}}
\|\Lambda^{\widetilde{s}
}\theta\|_{L^{2}}
^{\frac{p-2}{p}}
\right)\|\Lambda^{\frac{\alpha}{2}}G\|_{L^{2}}
\nonumber\\&\leq C\|\Lambda^{\frac{\alpha}{2}-1}G\|_{L^2}^{\frac{(p-2)\kappa}{p}} \|G\|_{L^{2}}^{\frac{p-2}{ p}}\|\Lambda^{\frac{\alpha}{2}}G\|_{L^{2}}^{1+\frac{2-(p-2)\kappa}{ p}}\left(1+
\|\Lambda^{\widetilde{s}
}\theta\|_{L^{2}}
^{\frac{p-2}{p}}
\right)
\nonumber\\&\leq C\left(\|\Lambda^{\frac{\alpha}{2}}u\|_{L^2}+
\|\theta\|_{L^{\frac{4}{2+\alpha}}}\right)^{\frac{(p-2)\kappa}{p}} \|G\|_{L^{2}}^{\frac{p-2}{ p}}\|\Lambda^{\frac{\alpha}{2}}G\|_{L^{2}}^{1+\frac{2-(p-2)\kappa}{ p}}\nonumber\\&\quad\times\left(1+
\|\Lambda^{\frac{\beta}{2}
}\theta\|_{L^{2}}
^{\frac{(p-2)(1-\kappa)}{p}}\|\Lambda^{\frac{(4+\alpha)\beta}{4}
}\theta\|_{L^{2}}
^{\frac{(p-2)\kappa}{p}}
\right)\nonumber\\
&\leq
\frac{1}{16}\|\Lambda^{\frac{\alpha}{2}}G\|_{L^{2}}^{2}+\frac{1}{16}
\|\Lambda^{\frac{(4+\alpha)\beta}{4}}\theta\|_{L^{2}}^{2}
+C(1+\|\Lambda^{\frac{\alpha}{2}}u\|_{L^2}^{2}+\|\Lambda^{\frac{\beta}{2}
}\theta\|_{L^{2}}^{2})(1+\|G\|_{L^{2}}^{2}),
\end{align}
where we have used the following fact
\begin{align}\label{sdfghp28}
\|\Lambda^{\frac{\alpha}{2}-1}G\|_{L^2}&\leq  \|\Lambda^{\frac{\alpha}{2}-1}\omega\|_{L^2}+
\|\Lambda^{\frac{\alpha}{2}-1}\mathcal{R}_{\alpha}\theta\|_{L^2} \nonumber\\
&\leq  C\left(\|\Lambda^{\frac{\alpha}{2}}u\|_{L^2}+
\|\Lambda^{-\frac{\alpha}{2}}\theta\|_{L^2}\right)
\nonumber\\
&\leq C
\left(\|\Lambda^{\frac{\alpha}{2}}u\|_{L^2}+
\|\theta\|_{L^{\frac{4}{2+\alpha}}}\right)\nonumber\\
&\leq C
\left(1+\|\Lambda^{\frac{\alpha}{2}}u\|_{L^2}\right).
\end{align}
In the last inequality of \eqref{sdfghp28}, we have used
$$\|\theta\|_{L^{\frac{4}{2+\alpha}}}\leq \|\theta_{0}\|_{L^{\frac{4}{2+\alpha}}}\leq C$$
due to $\theta_{0}\in L^{\frac{4}{2+\alpha}}(\mathbb{R}^{2})$.
In order to ensure $\kappa\in (0,1)$, keeping in mind the arbitrariness of $\epsilon>0$, we have
\begin{align}
\beta>\frac{4-6\alpha}{\alpha}.\nonumber
\end{align}
Concerning \eqref{asfbmu12} and \eqref{addsry897}, when $\beta>\frac{4-6\alpha}{\alpha}$, we see that
\begin{align}\label{fgdfdnmk67}
A_{1}
\leq
\frac{1}{16}\|\Lambda^{\frac{\alpha}{2}}G\|_{L^{2}}^{2}+\frac{1}{16}
\|\Lambda^{\frac{(4+\alpha)\beta}{4}}\theta\|_{L^{2}}^{2}
+C(1+\|\Lambda^{\frac{\alpha}{2}}u\|_{L^2}^{2}+\|\Lambda^{\frac{\beta}{2}
}\theta\|_{L^{2}}^{2})(1+\|G\|_{L^{2}}^{2}).
\end{align}
Via direct computations, we infer that
\begin{equation*}
\left\{\aligned
&\frac{4-6\alpha}{\alpha}\geq \frac{2(4-5\alpha)}{4+\alpha},\quad \mbox{if}\ \ \ 0<\alpha\leq\frac{7-\sqrt{33}}{2}\approx 0.6277,\\
& 1-\alpha>\frac{2(4-5\alpha)}{4+\alpha},\quad \ \ \mbox{if}\ \ \  \frac{7-\sqrt{33}}{2}<\alpha\leq\frac{2}{3},
\endaligned\right.
\end{equation*}
which yields
$$\max\left\{1-\alpha,\ \frac{4-6\alpha}{\alpha}\right\}\geq \frac{2(4-5\alpha)}{4+\alpha}.$$
This further implies
$$\beta>\frac{2(4-5\alpha)}{4+\alpha}.$$
Therefore, it allows us to take $\delta$ as
$$\delta=\frac{(4+\alpha)\beta+2(5\alpha-4)}{12}>0,$$
then by means of \eqref{uk3rtr03} and \eqref{dfexbg6}, we achieve
\begin{align}\label{asfbmu13}
A_{2}&\leq C\|[\mathcal{R}_{\alpha},u_{\theta}\cdot\nabla]\theta
\|_{\dot{B}_{2,2}^{-\frac{\alpha}{2}}}\|G\|_{\dot{B}_{2,2}^{\frac{\alpha}{2}}}
\nonumber\\&\leq C \|\Lambda^{1-\frac{\alpha}{4}} u_{\theta}\|_{L^{4}}\|\theta\|_{
{\dot{B}}_{4,2}^{1-\frac{5\alpha}{4}}} \|\Lambda^{\frac{\alpha}{2}}G\|_{L^{2}}
\nonumber\\&\leq C \|\Lambda^{1-\frac{5\alpha}{4}} \theta\|_{L^{4}}\|\theta\|_{
{\dot{B}}_{4,2}^{1-\frac{5\alpha}{4}}}
\|\Lambda^{\frac{\alpha}{2}}G\|_{L^{2}}
\nonumber\\&\leq C (\|\theta\|_{L^{2}}+\|\Lambda^{1-\frac{5\alpha}{4}+\delta} \theta\|_{L^{4}})^{2} \|\Lambda^{\frac{\alpha}{2}}G\|_{L^{2}}
\nonumber\\&\leq C\left(1+\|\Lambda^{1-\frac{5\alpha}{4}
+\delta}\theta\|_{\dot{B}_{\infty,\infty}^{-(1-\frac{5\alpha}{4}
+\delta)}}
^{\frac{1}{2}}
\|\Lambda^{1-\frac{5\alpha}{4}
+\delta}\theta\|_{\dot{B}_{2,2}^{\frac{(4+\alpha)\beta}{4}-(1-\frac{5\alpha}{4}
+2\delta)}}
^{\frac{1}{2}}
\right)^{2}
\|\Lambda^{\frac{\alpha}{2}}G\|_{L^{2}}
\nonumber\\&\leq C \left(1+\|\theta\|_{L^{\infty}}
\|\Lambda^{\frac{(4+\alpha)\beta}{4}-\delta
}\theta\|_{L^{2}}
\right)\|\Lambda^{\frac{\alpha}{2}}G\|_{L^{2}}\nonumber\\&\leq C \left(1+\|\theta\|_{L^{\infty}}\| \theta\|_{L^{2}}^{\frac{\delta}{{(4+\alpha)\beta}}}
\|\Lambda^{\frac{(4+\alpha)\beta}{4}
}\theta\|_{L^{2}}^{\frac{{(4+\alpha)\beta}-\delta}{{(4+\alpha)\beta}}}
\right)\|\Lambda^{\frac{\alpha}{2}}G\|_{L^{2}}\nonumber\\&\leq C \left(1+
\|\Lambda^{\frac{(4+\alpha)\beta}{4}
}\theta\|_{L^{2}}^{\frac{{(4+\alpha)\beta}-\delta}{{(4+\alpha)\beta}}}
\right)\|\Lambda^{\frac{\alpha}{2}}G\|_{L^{2}}\nonumber\\
&\leq
\frac{1}{16}\|\Lambda^{\frac{\alpha}{2}}G\|_{L^{2}}^{2}+\frac{1}{16}
\|\Lambda^{\frac{(4+\alpha)\beta}{4}}\theta\|_{L^{2}}^{2}+C.
\end{align}
Due to $\beta>\frac{4-6\alpha}{\alpha}$, it follows from the interpolation that
\begin{align}\label{asfbdfsfd46}
A_{3}&\leq C\|\Lambda^{1+\beta-\frac{3\alpha}{2}}\theta\|_{L^{2}}\|\Lambda^{\frac{\alpha}{2}}G\|_{L^{2}}
\nonumber\\&\leq   C\| \theta\|_{L^{2}} ^{1-\frac{2(2+2\beta-3\alpha)}{(4+\alpha)\beta}} \|\Lambda^{\frac{(4+\alpha)\beta}{4}}\theta\|_{L^{2}}^{\frac{2(2+2\beta-3\alpha)}{(4+\alpha)\beta}}
\|\Lambda^{\frac{\alpha}{2}}G\|_{L^{2}}
\nonumber\\&\leq
\frac{1}{16}\|\Lambda^{\frac{\alpha}{2}}G\|_{L^{2}}^{2}+\frac{1}{16}
\|\Lambda^{\frac{(4+\alpha)\beta}{4}}\theta\|_{L^{2}}^{2}+C.
\end{align}
We divide $A_{4}$ into two parts, namely
\begin{align}
A_{4}&=-  \int_{\mathbb{R}^{2}}
[\Lambda^{\frac{(2+\alpha)\beta}{4}}, u_{G} \cdot \nabla]\theta\,\,\Lambda^{\frac{(2+\alpha)\beta}{4}}\theta\,dx-   \int_{\mathbb{R}^{2}}
[\Lambda^{\frac{(2+\alpha)\beta}{4}}, u_{\theta} \cdot \nabla]\theta\,\,\Lambda^{\frac{(2+\alpha)\beta}{4}}\theta\,dx\nonumber\\
&\triangleq A_{41}+A_{42}.\nonumber
\end{align}
Taking advantage of \eqref{bmnhgf2} and \eqref{sdfghp28}, the term $A_{41}$ can be bounded by
\begin{align}\label{advbxs11}
A_{41}&\leq C\|[\Lambda^{\frac{(2+\alpha)\beta}{4}}, u_{G}\cdot \nabla]\theta\|_{L^{\frac{2(4+\alpha)}{6+\alpha}}}
\|\Lambda^{\frac{(2+\alpha)\beta}{4}}
\theta\|_{L^{\frac{2(4+\alpha)}{2+\alpha}}}
\nonumber\\
&\leq C\|\nabla u_{G}\|_{L^{\frac{4+\alpha}{2}}}
\|\Lambda^{\frac{(2+\alpha)\beta}{4}}\theta\|_{L^{\frac{2(4+\alpha)}
{2+\alpha}}}^{2}
\nonumber\\
&\leq
\|\Lambda^{\frac{4+2\alpha}{4+\alpha}}u_{G}\|_{L^2}\|\theta\|_{L^{\infty}}^{\frac{4}{4+\alpha}}
\|\Lambda^{\frac{(4+\alpha)\beta}{4}}\theta\|_{L^{2}}
^{\frac{4+2\alpha}{4+\alpha}}
\nonumber\\
&\leq C
\|\Lambda^{\frac{2+\alpha}{4}}u_{G}\|_{L^2}^{\frac{2\alpha}{4+\alpha}}
\|\Lambda^{1+\frac{\alpha}{2}}u_{G}\|_{L^2}^{\frac{4-\alpha}{4+\alpha}}
\|\Lambda^{\frac{(4+\alpha)\beta}{4}}\theta\|_{L^{2}}
^{\frac{4+2\alpha}{4+\alpha}}\nonumber\\
&\leq C \left(\|\Lambda^{\frac{\alpha}{2}}u_{G}\|_{L^2}^{\frac{1}{2}}\|\Lambda u_{G}\|_{L^2}^{\frac{1}{2}}\right)^{\frac{2\alpha}{4+\alpha}}
\|\Lambda^{1+\frac{\alpha}{2}}u_{G}\|_{L^2}^{\frac{4-\alpha}{4+\alpha}}
\|\Lambda^{\frac{(4+\alpha)\beta}{4}}\theta\|_{L^{2}}
^{\frac{4+2\alpha}{4+\alpha}}
 \nonumber\\
&\leq C  \|\Lambda^{\frac{\alpha}{2}-1}G\|_{L^2}^{\frac{\alpha}{4+\alpha}}\|G\|_{L^2}^{\frac{\alpha}{4+\alpha}}
\|\Lambda^{\frac{\alpha}{2}}G\|_{L^2}^{\frac{4-\alpha}{4+\alpha}}
\|\Lambda^{\frac{(4+\alpha)\beta}{4}}\theta\|_{L^{2}}
^{\frac{4+2\alpha}{4+\alpha}}
 \nonumber\\
&\leq C  \left(\|\Lambda^{\frac{\alpha}{2}}u\|_{L^2}+
\|\theta\|_{L^{\frac{4}{2+\alpha}}}\right)^{\frac{\alpha}{4+\alpha}}\|G\|_{L^2}^{\frac{\alpha}{4+\alpha}}
\|\Lambda^{\frac{\alpha}{2}}G\|_{L^2}^{\frac{4-\alpha}{4+\alpha}}
\|\Lambda^{\frac{(4+\alpha)\beta}{4}}\theta\|_{L^{2}}
^{\frac{4+2\alpha}{4+\alpha}}\nonumber\\&\leq
\frac{1}{16}\|\Lambda^{\frac{\alpha}{2}}G\|_{L^{2}}^{2}+\frac{1}{16}
\|\Lambda^{\frac{(4+\alpha)\beta}{4}}\theta\|_{L^{2}}^{2}+C \left(1+\|\Lambda^{\frac{\alpha}{2}}u\|_{L^2}^{2}\right)\|G\|_{L^2}^{2}.
\end{align}
Denoting
$$\gamma=\frac{(2+\alpha)\beta+2(1-\alpha)}{4},$$
and invoking \eqref{tvcbmp6} as well as \eqref{dfexbg6}, one has
\begin{align}\label{advbxs12}
A_{42}&\leq C\|[\Lambda^{\frac{(2+\alpha)\beta}{4}}, u_{\theta}\cdot \nabla]\theta\|_{L^{\frac{4\gamma}{2\gamma+1-\alpha}}}
\|\Lambda^{\frac{(2+\alpha)\beta}{4}}
\theta\|_{L^{\frac{4\gamma}{2\gamma+\alpha-1}}}
\nonumber\\
&\leq C\|\nabla u_{\theta}\|_{L^{\frac{2\gamma}{1-\alpha}}}\|\Lambda^{\frac{(2+\alpha)\beta}{4}}
\theta\|_{L^{\frac{4\gamma}{2\gamma+\alpha-1}}}^{2}
\nonumber\\
&\leq C\|\Lambda^{1-\alpha}\theta\|_{L^{\frac{2\gamma}{1-\alpha}}}
\|\Lambda^{\frac{(2+\alpha)\beta}{4}}
\theta\|_{L^{\frac{4\gamma}{2\gamma+\alpha-1}}}^{2}
\nonumber\\
&\leq C
\|\Lambda^{1-\alpha}\theta\|_{\dot{B}_{\infty,\infty}^{-(1-\alpha)}}^{1-\frac{1-\alpha}{\gamma}}
\|\Lambda^{1-\alpha}\theta\|_{\dot{B}_{2,2}^{\gamma-(1-\alpha)}}^{\frac{1-\alpha}{\gamma}}
\|\Lambda^{\frac{(2+\alpha)\beta}{4}}\theta\|_{\dot{B}_{\infty,\infty}
^{-\frac{(2+\alpha)\beta}{4}}}^{\frac{1-\alpha}{\gamma}}
\|\Lambda^{\frac{(2+\alpha)\beta}{4}}\theta\|_{\dot{B}_{2,2}^{\gamma-
\frac{(2+\alpha)\beta}{4}}}
^{2-\frac{1-\alpha}{\gamma}}
\nonumber\\
&\leq C\|\theta\|_{L^{\infty}}\|\Lambda^{\gamma}\theta\|_{L^{2}}^{2}
\nonumber\\
&\leq C\|\theta\|_{L^{\infty}}\|\theta\|_{L^{2}}^{2-\frac{8\gamma}{(4+\alpha)\beta}}
\|\Lambda^{\frac{(4+\alpha)\beta}{4}}\theta\|_{L^{2}}
^{\frac{8\gamma}{(4+\alpha)\beta}}
\nonumber\\
&\leq
\frac{1}{16}
\|\Lambda^{\frac{(4+\alpha)\beta}{4}}\theta\|_{L^{2}}^{2}+C,
\end{align}
where we have used $\beta>1-\alpha$.
It follows from \eqref{advbxs11} and \eqref{advbxs12} that
\begin{align}\label{cvghu81}
A_{4}
\leq
\frac{1}{16}\|\Lambda^{\frac{\alpha}{2}}G\|_{L^{2}}^{2}+\frac{1}{8}
\|\Lambda^{\frac{(4+\alpha)\beta}{4}}\theta\|_{L^{2}}^{2}+C \left(1+\|\Lambda^{\frac{\alpha}{2}}u\|_{L^2}^{2}\right)(1+\|G\|_{L^2}^{2}).
\end{align}
Plugging \eqref{fgdfdnmk67},  \eqref{asfbmu13}, \eqref{asfbdfsfd46} and \eqref{cvghu81} into \eqref{vbmk445}, it yields
\begin{align}
& \frac{d}{dt}\left(\|G(t)\|_{L^{2}}^{2}+ \|\Lambda^{\frac{(2+\alpha)\beta}{4}}\theta(t)\|_{L^{2}}^{2}\right)
+ \|\Lambda^{\frac{\alpha}{2}}G \|_{L^{2}}^{2}+\|\Lambda^{\frac{(4+\alpha)\beta}{4}}\theta\|_{L^{2}}^{2}
 \nonumber\\ & \leq C(1+\|\Lambda^{\frac{\alpha}{2}}u\|_{L^2}^{2}+\|\Lambda^{\frac{\beta}{2}
}\theta\|_{L^{2}}^{2})(1+\|G\|_{L^{2}}^{2}),\nonumber
\end{align}
which along with the Gronwall inequality implies \eqref{sfdsjkp2}. Consequently, this ends the proof of the lemma.
\end{proof}

\vskip .1in
Inspired by \cite[Proposition 3.3]{SWXY} or the above Lemma \ref{addftlem2}, we are able to carry out the following iterative procedure involving the combined quantity $G$.
\begin{lemma}\label{Las42}
Let $\beta>\max\{1-\alpha,\ \frac{4-6\alpha}{\alpha}\}$ with $\alpha\leq\frac{2}{3}$.
If it holds
\begin{eqnarray}\label{dfklp95}
\|G(t)\|_{L^{m_{k}}}^{m_{k}}+
\int_{0}^{t}{\|G(\tau)\|_{L^{\frac{2m_{k}}{2-\alpha}}}^{m_{k}}\,d\tau}\leq
M,
\end{eqnarray}
then we have
\begin{eqnarray}\label{sss2}
\|G(t)\|_{L^{m_{k+1}}}^{m_{k+1}}+
\int_{0}^{t}{\|G(\tau)\|_{L^{\frac{2m_{k+1}}{2-\alpha}}}^{m_{k+1}}\,d\tau}\leq
C(t,\,M,\,u_{0},\,\theta_{0}),
\end{eqnarray}
where
\begin{eqnarray}\label{serty8}
m_{k+1}=\frac{8\beta m_{k}}{2(2\beta+2-3\alpha)+(2-\alpha)\beta m_{k}}.
\end{eqnarray}
Furthermore, we may restrict $m_{k}$ and $m_{k+1}$ to the range
\begin{eqnarray}
2\leq m_{k},\,m_{k+1}<\min\left\{ \frac{2(2+\beta)}{(2+\alpha)\beta},\,\,\frac{2(2\beta+3\alpha-2)}{(2-\alpha)\beta}
\right\}.\nonumber
\end{eqnarray}
\end{lemma}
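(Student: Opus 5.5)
We follow the two-step structure of Lemma \ref{addftlem2}, with the vorticity equation replaced by the $G$-equation \eqref{t305}. The difference is that the forcing is now $\Lambda^{\beta-\alpha}\partial_{x_1}\theta$, which loses only $1+\beta-\alpha$ derivatives, plus the commutator $[\mathcal R_\alpha,u\cdot\nabla]\theta$.

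\textbf{Step 1: a higher Sobolev bound for $\theta$.} Assuming \eqref{dfklp95}, we first show
\begin{align*}
\|\Lambda^{\delta_k}\theta(t)\|_{L^2}^2+\int_0^t\|\Lambda^{\delta_k+\frac\beta2}\theta\|_{L^2}^2\,d\tau\le C(t,M,u_0,\theta_0),\qquad \delta_k=\frac\beta2\Big(\frac{2+\alpha}{2}m_k-1\right).
\end{align*}
Apply $\Lambda^{\delta_k}$ to the $\theta$-equation and split $u=u_G+u_\theta$.

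For the $u_G$ part we use \eqref{tvcbmp6} and \eqref{dfexbg6} exactly as in Lemma \ref{addftlem2}. The factor $\|\nabla u_G\|_{L^{(2+\alpha)m_k/2}}\le C\|G\|_{L^{(2+\alpha)m_k/2}}$ is interpolated between $L^{m_k}$ and $L^{2m_k/(2-\alpha)}$. This yields a bound $C\|\theta\|_{L^\infty}^2\|G\|_{L^{m_k}}^{\alpha m_k/2}\|G\|_{L^{2m_k/(2-\alpha)}}^{m_k}$, which is time-integrable by \eqref{dfklp95}.

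For the $u_\theta$ part we have $\nabla u_\theta\sim\Lambda^{1-\alpha}\theta$. We argue as for $A_{42}$: Gagliardo--Nirenberg against $\|\theta\|_{L^\infty}$, using $\beta>1-\alpha$, absorbs it into the dissipation.

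\textbf{Step 2: the $L^{m_{k+1}}$ estimate for $G$.} Test \eqref{t305} with $|G|^{m_{k+1}-2}G$. The dissipation gives $c\|G\|_{L^{2m_{k+1}/(2-\alpha)}}^{m_{k+1}}$, by the Córdoba--Córdoba type lower bound and Sobolev embedding.

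\emph{Linear term.} We write
\[
\int\Lambda^{\beta-\alpha}\partial_{x_1}\theta\,|G|^{m_{k+1}-2}G\le \|\Lambda^{1+\beta-\frac{3\alpha}2}\theta\|_{L^{r'}}\|\Lambda^{\frac\alpha2}(|G|^{m_{k+1}-2}G)\|_{L^r}.
\]
We then apply the chain rule \eqref{sdsvb999}, giving $\|\Lambda^{\alpha/2}G\|_{L^2}\|G\|^{m_{k+1}-2}_{L^{2r(m_{k+1}-2)/(2-r)}}$. The $\theta$ factor is interpolated via \eqref{dfexbg6} between $\|\theta\|_{L^\infty}$ and $\|\Lambda^{\delta_k+\beta/2}\theta\|_{L^2}$; this fixes $r$ by $1+\beta-\frac{3\alpha}{2}$ in place of $1-\frac\alpha2$. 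The $G$ factor is interpolated between $L^{m_{k+1}}$ and $L^{2m_{k+1}/(2-\alpha)}$ with some weight $\lambda$. Choosing $m_{k+1}$ so that Young's inequality yields exactly
\[
\frac c4\|G\|_{L^{2m_{k+1}/(2-\alpha)}}^{m_{k+1}}+C\big(\|\Lambda^{\delta_k+\frac\beta2}\theta\|_{L^2}^2+\|\Lambda^{\frac\alpha2}G\|_{L^2}^2\big)\big(1+\|G\|_{L^{m_{k+1}}}^{m_{k+1}}\big)
\]
is the analogue of \eqref{schpdt90}. We expect it to reproduce \eqref{serty8}: the shift from $1-\frac\alpha2$ to $1+\beta-\frac{3\alpha}2$ should account for the extra $2(2\beta+2-3\alpha)$ in the denominator.

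\emph{Commutator term.} We place $[\mathcal R_\alpha,u\cdot\nabla]\theta$ in $\dot B^{-\alpha/2}_{r,2}$ via \eqref{uk3rtr03} and pair it with $\Lambda^{\alpha/2}(|G|^{m_{k+1}-2}G)$. For $u_G$ this gives $\|G\|_{L^{p_1}}\|\theta\|_{\dot B^{1-3\alpha/2}_{p_2,2}}$; the $\theta$ factor is controlled as in $A_1$ by $\|\theta\|_{L^\infty}$ and $\|\Lambda^{\delta_k+\beta/2}\theta\|_{L^2}$, and the $G$ factors are interpolated between $\|G\|_{L^{m_{k+1}}}$ and the dissipation. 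For $u_\theta$ both factors involve only $\theta$, and the argument is as for $A_2$. Each piece should end in the same form $\frac c4(\cdots)+C(\text{integrable})(1+\|G\|_{L^{m_{k+1}}}^{m_{k+1}})$.

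\textbf{Conclusion and main obstacle.} Gronwall's inequality, together with Step 1 and \eqref{sfdsjkp2}, gives \eqref{sss2}. The range restriction on $m_k$ has two sources. The bound $m_k<\frac{2(2+\beta)}{(2+\alpha)\beta}$ ensures $\delta_k<1$, which is needed for \eqref{tvcbmp6}. The bound $m_k<\frac{2(2\beta+3\alpha-2)}{(2-\alpha)\beta}$, the fixed point of \eqref{serty8}, ensures $r\in(1,2)$ and $\lambda\in(0,1)$. The hard step is the commutator term with $u_G$ at the $L^{m_{k+1}}$ level. There the powers of $G$ must be distributed so that the total exponent on the dissipative norm stays below $m_{k+1}$, uniformly along the iteration. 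This is where $\beta>\frac{4-6\alpha}{\alpha}$ should enter, as it did in choosing $\kappa$ for $A_1$. If a direct split fails, the first thing to try is an extra interpolation through $\|G\|_{L^2}$ and $\|\Lambda^{\alpha/2}G\|_{L^2}$, which are integrable by \eqref{sfdsjkp2}.
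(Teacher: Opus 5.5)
Your outline is essentially the paper's proof. The paper quotes the bound $\|\Lambda^{\delta_k}\theta\|_{L^2}^2+\int_0^t\|\Lambda^{\frac{(2+\alpha)\beta m_k}{4}}\theta\|_{L^2}^2\,d\tau\le C$ from \cite{SWXY}; this is your Step 1, and your $u_G/u_\theta$ splitting reproduces it under $\alpha+\beta>1$. It then tests \eqref{t305} with $|G|^{m-2}G$ and treats the linear term, the $u_\theta$-commutator and the $u_G$-commutator with the tools you list, and the linear term is the one that produces \eqref{serty8}. The one correction concerns where the hypotheses enter: the direct split for the $u_G$-commutator does work (put $\Lambda u_G$ and $G$ in the same $L^{\frac{2(m-1)r_3}{r_3-2}}$ with $r_3=\frac{(2+\alpha)\beta m_k}{2-3\alpha+2\epsilon}$), and its constraint $m\le\frac{4\beta m_k}{4-6\alpha+4\epsilon+(2-\alpha)\beta m_k}$ lies above $m_{k+1}$ for small $\epsilon$ exactly when $m_k<\frac{2(2\beta+3\alpha-2)}{(2-\alpha)\beta}$, so that is where the fixed-point bound is used, whereas $\beta>\frac{4-6\alpha}{\alpha}$ is needed for \eqref{sfdsjkp2} and otherwise only through its consequence $\alpha\beta>2-3\alpha$, which (not the fixed-point bound) is what gives your $r\in(1,2)$ in the linear term.
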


\begin{proof}
We first point out that $\alpha>\frac{2}{3}$ plays a key role in deriving \cite[Proposition 3.3]{SWXY}. Consequently, some new ideas and estimates are strongly required to overcome the difficulty caused by $\alpha\leq\frac{2}{3}$. Notice that in this case, it follows from \eqref{dfklp95} that (see (3.11) of \cite{SWXY})
\begin{eqnarray}\label{dfjkpl99}
\|\Lambda^{\frac{\beta[(2+\alpha)m_{k}-2]}{4}}\theta(t)\|_{L^{2}}^{2}+
\int_{0}^{t}{\|\Lambda^{\frac{(2+\alpha)\beta m_{k}}{4}}\theta(\tau)\|_{L^{2}}^{2}\,d\tau}
\leq
C(t,\,M,\,u_{0},\,\theta_{0}).
\end{eqnarray}
After checking the proof of (3.11) of \cite{SWXY}, \eqref{dfjkpl99} is valid as long as \eqref{dfklp95} and $\alpha+\beta>1$ hold true.
Taking the inner product of (\ref{t305}) with
$|G|^{m-2}G$ and using the fact
\begin{eqnarray}
\int_{\mathbb{R}^{2}}
(\Lambda^{\alpha}G)|G|^{m-2}G\,dx\geq \widetilde{C}\|\Lambda^{\frac{\alpha}{2}}|G|^{\frac{m}{2}}\|_{L^{2}}^{2}\geq \widetilde{C}\|G\|_{L^{\frac{2m}{2-\alpha}}}^{m},\nonumber
\end{eqnarray}
we derive that
\begin{align}\label{sdhp856}
 \frac{1}{m}\frac{d}{dt}\|G(t)\|_{L^{m}}^{m}+\widetilde{C}
 \|G\|_{L^{\frac{2m}{2-\alpha}}}^{m} \leq&\int_{\mathbb{R}^{2}}
{\Lambda^{\beta-\alpha}\partial_{x_{1}}\theta\,\,|G|^{m-2}G\,dx}
\nonumber\\&+\int_{\mathbb{R}^{2}}
{[\mathcal {R}_{\alpha},\,u_{\theta}\cdot\nabla]\theta\,\,|G|^{m-2}G\,dx}\nonumber\\& +\int_{\mathbb{R}^{2}}
{[\mathcal {R}_{\alpha},\,u_{G}\cdot\nabla]\theta\,\,|G|^{m-2}G\,dx}\nonumber\\
 \triangleq& K_{1}+K_{2}+K_{3},
\end{align}
where $\widetilde{C}>0$ is an absolute constant.
We choose $r_{1}\in (2,\infty)$, $\lambda_{1}\in [0,1]$ and $m>2$ as follows
$$r_{1}=\frac{2(2+\alpha)\beta m_{k}}{(2+\alpha)\beta m_{k}-2(2\beta+2-3\alpha)},\quad
\lambda_{1}=\frac{2[(m-2)r_{1}-m]}{\alpha(m-2)r_{1}},$$
\begin{align}
\frac{2r_{1}}{r_{1}-1}\leq m \leq \frac{4r_{1}}{2r_{1}-(2+\alpha)} \equiv\frac{8\beta m_{k}}{2(2\beta+2-3\alpha)+(2-\alpha)\beta m_{k}},\nonumber
\end{align}
then we deduce from \eqref{sdsvb999} and \eqref{dfexbg6} that
\begin{align}
K_{1}&\leq C\|\Lambda^{1+\beta-\frac{3\alpha}{2}}\theta\|_{L^{\frac{2r_{1}}{r_{1}-2}}}
\|\Lambda^{\frac{\alpha}{2}}(|G|^{m-2}G)\|_{L^{\frac{2r_{1}}{r_{1}+2}}}\nonumber\\
&\leq C\|\Lambda^{1+\beta-\frac{3\alpha}{2}}\theta\|_{L^{\frac{2r_{1}}{r_{1}-2}}}
\|\Lambda^{\frac{\alpha}{2}}G\|_{L^{2}}\|G\|_{L^{(m-2)r_{1}}}^{m-2}
\nonumber\\
&\leq C
\|\Lambda^{1+\beta-\frac{3\alpha}{2}}\theta\|_{\dot{B}_{\infty,\infty}
^{-(1+\beta-\frac{3\alpha}{2})}}^{\frac{2}{r_{1}}}
\|\Lambda^{1+\beta-\frac{3\alpha}{2}}\theta\|_{\dot{B}_{2,2}^{\frac{(2+\alpha)\beta m_{k}}{4}-(1+\beta-\frac{3\alpha}{2})}}^{\frac{r_{1}-2}{r_{1}}}
\nonumber\\& \quad \times\|\Lambda^{\frac{\alpha}{2}}G\|_{L^{2}}
\|G\|_{L^{m}}^{(m-2)(1-\lambda_{1})}
\|G\|_{L^{\frac{2m}{2-\alpha}}}^{(m-2)\lambda_{1}}
\nonumber\\
&\leq C
\|\theta\|_{L^{\infty}}^{\frac{2}{r_{1}}}
\|\Lambda^{\frac{(2+\alpha)\beta m_{k}}{4}}\theta\|_{L^{2}}^{\frac{r_{1}-2}{r_{1}}}
\|\Lambda^{\frac{\alpha}{2}}G\|_{L^{2}}
\|G\|_{L^{m}}^{(m-2)(1-\lambda_{1})}
\|G\|_{L^{\frac{2m}{2-\alpha}}}^{(m-2)\lambda_{1}}
\nonumber\\
&\leq C
\|\Lambda^{\frac{(2+\alpha)\beta m_{k}}{4}}\theta\|_{L^{2}}^{\frac{r_{1}-2}{r_{1}}}
\|\Lambda^{\frac{\alpha}{2}}G\|_{L^{2}}
\|G\|_{L^{m}}^{(m-2)(1-\lambda_{1})}
\|G\|_{L^{\frac{2m}{2-\alpha}}}^{(m-2)\lambda_{1}}
\nonumber\\
&\leq
\frac{\widetilde{C}}{16}
 \|G\|_{L^{\frac{2m}{2-\alpha}}}^{m}+C\left(1+\|\Lambda^{\frac{(2+\alpha)\beta m_{k}}{4}}\theta\|_{L^{2}}^{2}+\|\Lambda^{\frac{\alpha}{2}}G\|_{L^{2}}^{2}\right)
 (1+\|G\|_{L^{m}}^{m}).\nonumber
\end{align}
For the case $\frac{2r_{1}+2}{r_{1}}\leq m\leq \frac{2r_{1}}{r_{1}-1}$, it suffices to modify the proof as
$$\|G\|_{L^{(m-2)r_{1}}}^{m-2}\leq C (\|G\|_{L^{2}}+\|G\|_{L^{m}})^{m-2}\leq C(1+\|G\|_{L^{m}}^{m}).$$
Notice that the case $2<m<\frac{2r_{1}+2}{r_{1}}\equiv \frac{3(2+\alpha)\beta m_{k}-2(2\beta+2-3\alpha)}{(2+\alpha)\beta m_{k}}$ is actually more easier to handle. To this end, we take $\Pi_{1}>2$ as
$$\Pi_{1}=\frac{2[(2+\alpha)\beta m_{k}-2(2\beta+2-3\alpha)+4\delta_{1}]}{(2+\alpha)\beta m_{k}-2(2\beta+2-3\alpha)},$$
where
$$ \frac{2\beta+2-3\alpha}{2}\leq \delta_{1} \leq  \min\left\{ \frac{(2+\alpha)\beta m_{k}+2(2\beta+2-3\alpha)}{8},\quad \frac{2\beta+4-3\alpha}{2}\right\}.$$
We therefore obtain
\begin{align}
K_{1}&\leq C\|\Lambda^{1+\beta-\frac{3\alpha}{2}}\theta\|_{L^{\frac{2\Pi_{1}}{\Pi_{1}-2}}}
\|\Lambda^{\frac{\alpha}{2}}(|G|^{m-2}G)\|_{L^{\frac{2\Pi_{1}}{\Pi_{1}+2}}}\nonumber\\
&\leq C\|\Lambda^{1+\beta-\frac{3\alpha}{2}}\theta\|_{L^{\frac{2\Pi_{1}}{\Pi_{1}-2}}}
\|\Lambda^{\frac{\alpha}{2}}G\|_{L^{2}}\|G\|_{L^{(m-2)\Pi_{1}}}^{m-2}
\nonumber\\
&\leq C
\|\Lambda^{1+\beta-\frac{3\alpha}{2}}\theta\|_{\dot{B}_{\infty,\infty}
^{-\delta_{1}}}^{\frac{2}{\Pi_{1}}}
\|\Lambda^{1+\beta-\frac{3\alpha}{2}}\theta\|_{\dot{B}_{2,2}^{\frac{(2+\alpha)
\beta m_{k}}{4}-(1+\beta-\frac{3\alpha}{2})}}^{\frac{\Pi_{1}-2}{\Pi_{1}}}
 \|\Lambda^{\frac{\alpha}{2}}G\|_{L^{2}}(\|G\|_{L^{2}}+\|G\|_{L^{m}})^{m-2}
\nonumber\\
&\leq C
(\|\theta\|_{L^{2}}+\|\theta\|_{L^{\infty}})^{\frac{2}{\Pi_{1}}}
\|\Lambda^{\frac{(2+\alpha)\beta m_{k}}{4}}\theta\|_{L^{2}}^{\frac{\Pi_{1}-2}{\Pi_{1}}}
\|\Lambda^{\frac{\alpha}{2}}G\|_{L^{2}}
(1+\|G\|_{L^{m}}^{m})
\nonumber\\
&\leq C\left(1+\|\Lambda^{\frac{(2+\alpha)\beta m_{k}}{4}}\theta\|_{L^{2}}^{2}+
\|\Lambda^{\frac{\alpha}{2}}G\|_{L^{2}}^{2}\right)(1+\|G\|_{L^{m}}^{m}).\nonumber
\end{align}
As a result, we deduce
\begin{align}\label{sdhpfg457}
K_{1} \leq
\frac{\widetilde{C}}{16}
 \|G\|_{L^{\frac{2m}{2-\alpha}}}^{m}+C\left(1+\|\Lambda^{\frac{(2+\alpha)\beta m_{k}}{4}}\theta\|_{L^{2}}^{2}+\|\Lambda^{\frac{\alpha}{2}}G\|_{L^{2}}^{2}\right)
 (1+\|G\|_{L^{m}}^{m}),
\end{align}
where $m$ satisfies
\begin{align}\label{sndde11}
2< m \leq  \frac{8\beta m_{k}}{2(2\beta+2-3\alpha)+(2-\alpha)\beta m_{k}}.
\end{align}
To bound $K_{2}$, we fix $r_{2}\in (1,2)$, $\lambda_{2}\in [0,1]$ and $m>2$ as follows
$$r_{2}=\frac{(2+\alpha)\beta m_{k}}{5\alpha-4-4\epsilon+(2+\alpha)\beta m_{k}},\qquad
\lambda_{2}=\frac{(3m-4)r_{2}-2m}{\alpha(m-2)r_{2}},
$$
\begin{align}
\frac{4r_{2}}{3r_{2}-2}\leq m \leq \frac{8r_{2}}{(6+\alpha)r_{2}-2(2+\alpha)} \equiv\frac{8\beta m_{k}}{8-10\alpha+8\epsilon+(2-\alpha)\beta m_{k}}\nonumber
\end{align}
for any sufficiently small $\epsilon>0$, then it follows from \eqref{uk3rtr03} and \eqref{sdsvb999} that
\begin{align}\label{sdhpfg45811}
K_{2}&\leq C\|\Lambda^{-\frac{\alpha}{2}}([\mathcal{R}_{\alpha},u_{\theta}\cdot\nabla]\theta)
\|_{L^{\frac{r_{2}}{r_{2}-1}}}\|\Lambda^{\frac{\alpha}{2}}(|G|^{m-2}G)\|_{L^{r_{2}}}
\nonumber\\&\leq
C\|[\mathcal{R}_{\alpha},u_{\theta}\cdot\nabla]\theta
\|_{\dot{B}_{\frac{r_{2}}{r_{2}-1},1}^{-\frac{\alpha}{2}}}
\|\Lambda^{\frac{\alpha}{2}}(|G|^{m-2}G)\|_{L^{r_{2}}}
\nonumber\\&\leq C \|\Lambda^{1-\frac{\alpha}{4}+\epsilon} u_{\theta}\|_{L^{\frac{2r_{2}}{r_{2}-1}}}\|\theta\|_{
{\dot{B}}_{\frac{2r_{2}}{r_{2}-1},1}^{1-\frac{5\alpha}{4}-\epsilon}} \|\Lambda^{\frac{\alpha}{2}}G\|_{L^{2}}\|G\|_{L^{\frac{2(m-2)r_{2}}{2-r_{2}}}}^{m-2}
\nonumber\\&\leq C \|\Lambda^{1-\frac{5\alpha}{4}+\epsilon} \theta\|_{L^{\frac{2r_{2}}{r_{2}-1}}}\|\theta\|_{
{{B}}_{\frac{2r_{2}}{r_{2}-1},1}^{1-\frac{5\alpha}{4}-\epsilon}}
\|\Lambda^{\frac{\alpha}{2}}G\|_{L^{2}}\|G\|_{L^{\frac{2(m-2)r_{2}}{2-r_{2}}}}^{m-2}
\nonumber\\&\leq C \|\Lambda^{1-\frac{5\alpha}{4}+\epsilon} \theta\|_{L^{\frac{2r_{2}}{r_{2}-1}}}(\|\theta\|_{L^{2}}+\|\Lambda^{1-\frac{5\alpha}{4}+\epsilon} \theta\|_{L^{\frac{2r_{2}}{r_{2}-1}}})
\|\Lambda^{\frac{\alpha}{2}}G\|_{L^{2}}\|G\|_{L^{\frac{2(m-2)r_{2}}{2-r_{2}}}}^{m-2}
\nonumber\\&\leq C (1+\|\Lambda^{1-\frac{5\alpha}{4}+\epsilon} \theta\|_{L^{\frac{2r_{2}}{r_{2}-1}}}^{2})
\|\Lambda^{\frac{\alpha}{2}}G\|_{L^{2}}\|G\|_{L^{\frac{2(m-2)r_{2}}{2-r_{2}}}}^{m-2}
\nonumber\\&\leq C\left(1+\|\Lambda^{1-\frac{5\alpha}{4}
+\epsilon}\theta\|_{\dot{B}_{\infty,\infty}^{-(1-\frac{5\alpha}{4}
+\epsilon)}}
^{\frac{2}{r_{2}}}
\|\Lambda^{1-\frac{5\alpha}{4}
+\epsilon}\theta\|_{\dot{B}_{2,2}^{\frac{(2+\alpha)\beta m_{k}}{4}-(1-\frac{5\alpha}{4}
+\epsilon)}}
^{\frac{2(r_{2}-1)}{r_{2}}}
\right)
\|\Lambda^{\frac{\alpha}{2}}G\|_{L^{2}}
\nonumber\\&\quad \times
\|G\|_{L^{m}}^{(m-2)(1-\lambda_{2})}
\|G\|_{L^{\frac{2m}{2-\alpha}}}^{(m-2)\lambda_{2}}
\nonumber\\&\leq C\left(1+\|\theta\|_{L^{\infty}}
^{\frac{2}{r_{2}}}
\|\Lambda^{\frac{(2+\alpha)\beta m_{k}}{4}}\theta\|_{L^{2}}
^{\frac{2(r_{2}-1)}{r_{2}}}
\right)
\|\Lambda^{\frac{\alpha}{2}}G\|_{L^{2}}
\|G\|_{L^{m}}^{(m-2)(1-\lambda_{2})}
\|G\|_{L^{\frac{2m}{2-\alpha}}}^{(m-2)\lambda_{2}}
\nonumber\\&\leq C\left(1+
\|\Lambda^{\frac{(2+\alpha)\beta m_{k}}{4}}\theta\|_{L^{2}}
^{\frac{2(r_{2}-1)}{r_{2}}}
\right)
\|\Lambda^{\frac{\alpha}{2}}G\|_{L^{2}}
\|G\|_{L^{m}}^{(m-2)(1-\lambda_{2})}
\|G\|_{L^{\frac{2m}{2-\alpha}}}^{(m-2)\lambda_{2}}
\nonumber\\
&\leq
\frac{\widetilde{C}}{16}
 \|G\|_{L^{\frac{2m}{2-\alpha}}}^{m}+C\left(1+\|\Lambda^{\frac{(2+\alpha)\beta m_{k}}{4}}\theta\|_{L^{2}}^{2}+\|\Lambda^{\frac{\alpha}{2}}G\|_{L^{2}}^{2}\right)
 (1+\|G\|_{L^{m}}^{m}).
\end{align}
For the case $\frac{r_{2}+2}{r_{2}}\leq m\leq \frac{4r_{2}}{3r_{2}-2}$, it suffices to modify one part of \eqref{sdhpfg45811} as
$$\|G\|_{L^{\frac{2(m-2)r_{2}}{2-r_{2}}}}^{m-2}\leq C (\|G\|_{L^{2}}+\|G\|_{L^{m}})^{m-2}\leq C(1+\|G\|_{L^{m}}^{m}),$$
which implies the validness of \eqref{sdhpfg45811} in this case. For the case $2<m<\frac{r_{2}+2}{r_{2}}$, we appeal to the following variant version of \eqref{tvcbmp6}, whose proof is the same one as for \eqref{tvcbmp6}
\begin{eqnarray}\label{sdfghq66}
\|[\mathcal{R}_{\alpha},f\cdot\nabla]g\|_{L^{q}}\leq C\|\nabla f\|_{L^{q_{1}}}\|\Lambda^{1-\alpha}g\|_{L^{q_{2}}},\quad \frac{1}{q}=\frac{1}{q_{1}}+\frac{1}{q_{2}}.
\end{eqnarray}
In terms of the case $2<m<\frac{r_{2}+2}{r_{2}}$, we take $\Pi_{2}>1$ as
$$\Pi_{2}=\frac{(2+\alpha)\beta m_{k}+4(\delta_{2}+\alpha-1)}{(2+\alpha)\beta m_{k}+4(\alpha-1)},$$
where $\Lambda_{1}\leq \delta_{2} \leq \Lambda_{2}$ given by
$$\Lambda_{1}=\max\left\{\frac{(4-5\alpha)[(2+\alpha)\beta m_{k}+4\alpha-4]}{4[(2+\alpha)\beta m_{k}+5\alpha-4]},\quad 1-\alpha\right\},$$
$$\Lambda_{2}= \min\left\{\frac{(2+\alpha)\beta m_{k}[(2+\alpha)\beta m_{k}+4\alpha-4]}{8[(2+\alpha)\beta m_{k}+5\alpha-4]},\quad 2-\alpha\right\}.$$
Thus we deduce from \eqref{sdfghq66} that
\begin{align}\label{sdhpfasdg98}
K_{2}&\leq C\|[\mathcal{R}_{\alpha},u_{\theta}\cdot\nabla]\theta
\|_{L^{\frac{\Pi_{2}}{\Pi_{2}-1}}}\||G|^{m-2}G\|_{L^{\Pi_{2}}}
\nonumber\\&\leq
C\|\nabla u_{\theta}
\|_{L^{\frac{2\Pi_{2}}{\Pi_{2}-1}}}\|\Lambda^{1-\alpha}\theta
\|_{L^{\frac{2\Pi_{2}}{\Pi_{2}-1}}}
\|G\|_{L^{\Pi_{2}(m-1)}}^{m-1}
\nonumber\\&\leq C\|\Lambda^{1-\alpha}\theta
\|_{L^{\frac{2\Pi_{2}}{\Pi_{2}-1}}}^{2}
\|G\|_{L^{\Pi_{2}(m-1)}}^{m-1}
\nonumber\\&\leq C
\|\Lambda^{1-\alpha}\theta\|_{\dot{B}_{\infty,\infty}^{-\delta_{2}}}
^{\frac{2}{\Pi_{2}}}
\|\Lambda^{1-\alpha}\theta\|_{\dot{B}_{2,2}^{\frac{(2+\alpha)\beta m_{k}}{4}-(1-\alpha)}}
^{\frac{2(\Pi_{2}-1)}{\Pi_{2}}}(\|G\|_{L^{2}}^{m-1}+\|G\|_{L^{m}}^{m-1})\nonumber\\&\leq C
(\|\theta\|_{L^{2}}+\|\theta\|_{L^{\infty}})
^{\frac{2}{\Pi_{2}}}
\|\Lambda^{\frac{(2+\alpha)\beta m_{k}}{4}}\theta\|_{L^{2}}
^{\frac{2(\Pi_{2}-1)}{\Pi_{2}}}(\|G\|_{L^{2}}^{m-1}+\|G\|_{L^{m}}^{m-1})\nonumber\\&\leq C
(1+\|\Lambda^{\frac{(2+\alpha)\beta m_{k}}{4}}\theta\|_{L^{2}}
^{2})(1+\|G\|_{L^{m}}^{m}).
\end{align}
Therefore, we get from \eqref{sdhpfg45811} and \eqref{sdhpfasdg98} that
\begin{align}\label{sdhpfg458}
K_{2}
\leq
\frac{\widetilde{C}}{16}
 \|G\|_{L^{\frac{2m}{2-\alpha}}}^{m}+C\left(1+\|\Lambda^{\frac{(2+\alpha)\beta m_{k}}{4}}\theta\|_{L^{2}}^{2}+\|\Lambda^{\frac{\alpha}{2}}G\|_{L^{2}}^{2}\right)
 (1+\|G\|_{L^{m}}^{m}),
\end{align}
where $m$ should be satisfied
\begin{align}\label{sndde12}
2< m \leq  \frac{8\beta m_{k}}{8-10\alpha+8\epsilon+(2-\alpha)\beta m_{k}}
\end{align}
To deal with $K_{3}$, we take $r_{3}\in (2,\infty)$, $\lambda_{3}\in [0,1]$ and $m>2$ as follows
$$r_{3}=\frac{(2+\alpha)\beta m_{k}}{2-3\alpha+2\epsilon},\quad
\lambda_{3}=\frac{(m-2)r_{3}+2m}{\alpha(m-1)r_{3}},\qquad\qquad\qquad$$
\begin{align}\label{sndde13}
m\leq\frac{4r_{3}}{(2-\alpha)r_{3}+2(2+\alpha)}\equiv\frac{4\beta m_{k}}{4-6\alpha+4\epsilon+(2-\alpha)\beta m_{k}}
\end{align}
for any sufficiently small $\epsilon>0$. By virtue of \eqref{uk3rtr03} and \eqref{sdsvb999} again, we obtain
\begin{align}\label{sdhpfg459}
K_{3}&\leq C\|\Lambda^{-\frac{\alpha}{2}}([\mathcal{R}_{\alpha},u_{G}\cdot\nabla]\theta)
\|_{L^{\frac{2(m-1)r_{3}}{r_{3}+2(m-2)}}}
\|\Lambda^{\frac{\alpha}{2}}(|G|^{m-2}G)\|_{L^{\frac{2(m-1)r_{3}}{(2m-3)r_{3}-2(m-2)}}}
\nonumber\\&\leq
C\|[\mathcal{R}_{\alpha},u_{G}\cdot\nabla]\theta
\|_{\dot{B}_{\frac{2(m-1)r_{3}}{r_{3}+2(m-2)},1}^{-\frac{\alpha}{2}}}
\|\Lambda^{\frac{\alpha}{2}}(|G|^{m-2}G)\|_{L^{\frac{2(m-1)r_{3}}{(2m-3)r_{3}-2(m-2)}}}
\nonumber\\&\leq C \|\Lambda u_{G}\|_{L^{\frac{2(m-1)r_{3}}{r_{3}-2}}}\|\theta\|_{
{\dot{B}}_{r_{3},1}^{1-\frac{3\alpha}{2}}} \|\Lambda^{\frac{\alpha}{2}}G\|_{L^{2}}\|G\|_{L^{\frac{2(m-1)r_{3}}{r_{3}-2}}}^{m-2}
\nonumber\\&\leq C  \left(\|\theta\|_{L^{2}}+\|\Lambda^{1-\frac{3\alpha}{2}+\epsilon}\theta\|_{L^{r_{3}}}\right) \|\Lambda^{\frac{\alpha}{2}}G\|_{L^{2}}\|G\|_{L^{\frac{2(m-1)r_{3}}{r_{3}-2}}}^{m-1}
\nonumber\\&\leq C\left(1+\|\Lambda^{1-\frac{3\alpha}{2}
+\epsilon}\theta\|_{\dot{B}_{\infty,\infty}^{-(1-\frac{3\alpha}{2}
+\epsilon)}}
^{1-\frac{2}{r_{3}}}
\|\Lambda^{1-\frac{3\alpha}{2}
+\epsilon}\theta\|_{\dot{B}_{2,2}^{\frac{(2+\alpha)\beta m_{k}}{4}-(1-\frac{3\alpha}{2}
+\epsilon)}}
^{\frac{2}{r_{3}}}
\right)
\|\Lambda^{\frac{\alpha}{2}}G\|_{L^{2}}
\nonumber\\&\quad \times
\|G\|_{L^{m}}^{(m-1)(1-\lambda_{3})}
\|G\|_{L^{\frac{2m}{2-\alpha}}}^{(m-1)\lambda_{3}}
\nonumber\\&\leq C\left(1+\|\theta\|_{L^{\infty}}
^{1-\frac{2}{r_{3}}}
\|\Lambda^{\frac{(2+\alpha)\beta m_{k}}{4}}\theta\|_{L^{2}}
^{\frac{2}{r_{3}}}
\right)\|\Lambda^{\frac{\alpha}{2}}G\|_{L^{2}}\|G\|_{L^{m}}^{(m-1)(1-\lambda_{3})}
\|G\|_{L^{\frac{2m}{2-\alpha}}}^{(m-1)\lambda_{3}}
\nonumber\\&\leq C\left(1+
\|\Lambda^{\frac{(2+\alpha)\beta m_{k}}{4}}\theta\|_{L^{2}}
^{\frac{2}{r_{3}}}
\right)\|\Lambda^{\frac{\alpha}{2}}G\|_{L^{2}}\|G\|_{L^{m}}^{(m-1)(1-\lambda_{3})}
\|G\|_{L^{\frac{2m}{2-\alpha}}}^{(m-1)\lambda_{3}}
\nonumber\\
&\leq
\frac{\widetilde{C}}{16}
 \|G\|_{L^{\frac{2m}{2-\alpha}}}^{m}+C\left(1+\|\Lambda^{\frac{(2+\alpha)\beta m_{k}}{4}}\theta\|_{L^{2}}^{2}+\|\Lambda^{\frac{\alpha}{2}}G\|_{L^{2}}^{2}\right)
 (1+\|G\|_{L^{m}}^{m}).
\end{align}
Inserting \eqref{sdhpfg457}, \eqref{sdhpfg458} and \eqref{sdhpfg459} into \eqref{sdhp856}, this enables us to obtain
\begin{align}\label{sdhp8510}
 \frac{d}{dt}\|G(t)\|_{L^{m}}^{m}+\widetilde{C}
 \|G\|_{L^{\frac{2m}{2-\alpha}}}^{m} \leq C\left(1+\|\Lambda^{\frac{(2+\alpha)\beta m_{k}}{4}}\theta\|_{L^{2}}^{2}+\|\Lambda^{\frac{\alpha}{2}}G\|_{L^{2}}^{2}\right)
 (1+\|G\|_{L^{m}}^{m}),
\end{align}
where $m$ should satisfy \eqref{sndde11}, \eqref{sndde12} and \eqref{sndde13}.
Note that the arbitrariness of $\epsilon>0$, we have that
\begin{align}
2< m\leq\frac{8\beta m_{k}}{2(2\beta+2-3\alpha)+(2-\alpha)\beta m_{k}},\nonumber
\end{align}
\begin{align}
2<m< \min\left\{\frac{8\beta m_{k}}{8-10\alpha+(2-\alpha)\beta m_{k}},\quad \frac{4\beta m_{k}}{4-6\alpha+(2-\alpha)\beta m_{k}} \right\}.\nonumber
\end{align}
Due to $\beta>1-\alpha$ and $m_{k}<\frac{2(2\beta+3\alpha-2)}{(2-\alpha)\beta}$, it is not hard to check
$$\frac{8\beta m_{k}}{8-10\alpha+(2-\alpha)\beta m_{k}}>\frac{8\beta m_{k}}{2(2\beta+2-3\alpha)+(2-\alpha)\beta m_{k}},$$
$$\frac{4\beta m_{k}}{4-6\alpha+(2-\alpha)\beta m_{k}}>\frac{8\beta m_{k}}{2(2\beta+2-3\alpha)+(2-\alpha)\beta m_{k}}.$$
As a result, $m$ should satisfy
\begin{align}
2< m\leq\frac{8\beta m_{k}}{2(2\beta+2-3\alpha)+(2-\alpha)\beta m_{k}}.\nonumber
\end{align}
We thus deduce from \eqref{sdhp8510} that
\begin{align}
\frac{d}{dt}\|G(t)\|_{L^{m_{k+1}}}^{m_{k+1}}+
\|G\|_{L^{\frac{2m_{k+1}}{2-\alpha}}}^{m_{k+1}}
\leq& C\left(1+\|\Lambda^{\frac{(2+\alpha)\beta m_{k}}{4}}\theta\|_{L^{2}}^{2}+\|\Lambda^{\frac{\alpha}{2}}G\|_{L^{2}}^{2}\right)
(1+\|G\|_{L^{m_{k+1}}}^{m_{k+1}}),\nonumber
\end{align}
where
$$m_{k+1}=\frac{8\beta m_{k}}{2(2\beta+2-3\alpha)+(2-\alpha)\beta m_{k}}.$$
The desired \eqref{sss2} follows from \eqref{dfjkpl99} and the Gronwall inequality immediately.
\end{proof}

\vskip .1in
Lemma \ref{Las42} allows us to show the following estimate.
\begin{lemma}
If $\beta>\max\{1-\alpha,\ \frac{4-6\alpha }{\alpha}\}$ with $\alpha\leq\frac{2}{3}$,
then it holds
\begin{eqnarray}\label{cvhlp6}
\|G(t)\|_{L^{m}}^{m}+
\int_{0}^{t}{\|G(\tau)\|_{L^{\frac{2m}{2-\alpha}}}^{m}\,d\tau}\leq
C(t,\,u_{0},\,\theta_{0}),
\end{eqnarray}
where $m$ satisfies
\begin{eqnarray}\label{dfdsgkp6}
2\leq m <\min\left\{ \frac{2(2+\beta)}{(2+\alpha)\beta},\,\,\frac{2(2\beta+3\alpha-2)}{(2-\alpha)\beta}
\right\}.
\end{eqnarray}
\end{lemma}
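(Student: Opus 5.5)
The plan is to mimic the proof of Lemma \ref{addftlem3}: iterate Lemma \ref{Las42} starting from the $L^2$ bound \eqref{sfdsjkp2}, and pass to the limit in the recursion \eqref{serty8}.

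\emph{Base case.} Take $m_1=2$. Estimate \eqref{sfdsjkp2} gives $\sup_t\|G(t)\|_{L^2}^2+\int_0^t\|\Lambda^{\frac{\alpha}{2}}G\|_{L^2}^2\,d\tau\le C$. By the Sobolev embedding $\dot H^{\frac{\alpha}{2}}\hookrightarrow L^{\frac{4}{2-\alpha}}$, this is exactly hypothesis \eqref{dfklp95} with $m_k=2$.

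\emph{Iteration.} Define $m_{k+1}=F(m_k)$ with
$$F(m)=\frac{8\beta m}{2(2\beta+2-3\alpha)+(2-\alpha)\beta m}.$$
The map $F$ is increasing and concave on $(0,\infty)$, and its fixed points are $0$ and
$$m_*=\frac{2(2\beta+3\alpha-2)}{(2-\alpha)\beta},$$
which is the second quantity appearing in \eqref{dfdsgkp6}. We check $m_*>2$: this reduces to $2\beta+3\alpha-2>(2-\alpha)\beta$, i.e.\ $\alpha(\beta+3)>2$, which holds under the standing assumption $\beta>\frac{4-6\alpha}{\alpha}$ (and also follows from $\beta>1-\alpha$ with $\alpha\le\frac23$ in the relevant range). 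Hence $F(2)>2$, and by monotonicity and concavity the sequence $m_k$ is strictly increasing and converges to $m_*$. As in Lemma \ref{addftlem3}, the recursion \eqref{serty8} is a linear fractional map, so one can even write $m_k$ explicitly: $1/m_k$ satisfies an affine recursion with ratio $\frac{2\beta+2-3\alpha}{4\beta}<1$.

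\emph{Passing to general $m$.} Each application of Lemma \ref{Las42} is legitimate only while $m_k,m_{k+1}$ stay below
$$\min\Big\{\frac{2(2+\beta)}{(2+\alpha)\beta},\,m_*\Big\}.$$
Fix $m$ satisfying \eqref{dfdsgkp6}. There are two cases.
\begin{itemize}
\item If the minimum is $m_*$: since $m_k\uparrow m_*$, some $m_K\ge m$. Then interpolate between the bounds for $L^2$ and $L^{m_K}$. For the dissipative time integral, interpolate $L^{\frac{2m}{2-\alpha}}$ between $L^{\frac{4}{2-\alpha}}$ and $L^{\frac{2m_K}{2-\alpha}}$ and use Hölder in time.
\item If the first quantity is smaller: stop at the first index where $m_{k+1}$ would exceed $m$. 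Because Lemma \ref{Las42} actually allows any target $m\le F(m_k)$ (recall \eqref{sndde11}), we simply replace the last step by the target $m$ itself.
\end{itemize}
The same device handles the first case without interpolation. The constants depend on $t$, $u_0$, $\theta_0$ through finitely many steps, the number of steps $K$ depending only on $m,\alpha,\beta$.

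\emph{Main obstacle.} The delicate point is to confirm that every intermediate $m_k$ respects the admissible window of Lemma \ref{Las42}, in particular the constraint coming from \eqref{dfjkpl99} that $\frac{(2+\alpha)\beta m_k}{4}$ be a controlled regularity index for $\theta$. This works because the sequence is monotone and bounded by $m_*$; only finitely many steps are needed, and the truncation at the target $m$ keeps every step within the window.
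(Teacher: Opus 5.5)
Your proposal is correct and follows the paper's proof. You iterate Lemma \ref{Las42} starting from the $L^2$ bound \eqref{sfdsjkp2}, use $\beta>\frac{4-6\alpha}{\alpha}\ge\frac{2-3\alpha}{\alpha}$ to get that the sequence from \eqref{serty8} increases to $\frac{2(2\beta+3\alpha-2)}{(2-\alpha)\beta}$, and keep every input $m_k$ below $\frac{2(2+\beta)}{(2+\alpha)\beta}$. Your explicit passage to an arbitrary admissible $m$, either by interpolation with Hölder in time or by taking $m$ itself as the final target (allowed since the proof of Lemma \ref{Las42} works for any $2<m\le \frac{8\beta m_k}{2(2\beta+2-3\alpha)+(2-\alpha)\beta m_k}$), simply makes precise a step the paper leaves implicit.
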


\begin{proof}
It follows from \eqref{serty8} that
\begin{align}
m_{k}=\frac{2(2\beta+3\alpha-2)}{(2-\alpha)\beta
+(\alpha\beta+3\alpha-2)\Big(\frac{2\beta+2-3\alpha}
{4\beta}\Big)^{k-1}},\quad k\geq1.\nonumber
\end{align}
Due to $\beta>\frac{4-6\alpha}{\alpha}\geq\frac{2-3\alpha}{\alpha}$ for $\alpha\leq \frac{2}{3}$, the sequence $\{m_{k}\}_{k\in \mathbb{N}}$ is increasing and it has the limit
$$\lim_{k\rightarrow\infty}m_{k}=\frac{2(2\beta+3\alpha-2)}
{(2-\alpha)\beta}.$$
It implies that $m$ should be satisfied
\begin{eqnarray}\label{asdfvbv25}
2\leq m<\frac{2(2\beta+3\alpha-2)}{(2-\alpha)\beta}.
\end{eqnarray}
Keeping in mind the following restriction
$$m_{k}<\frac{2(2+\beta)}{(2+\alpha)\beta},\quad k=1,2,\cdot\cdot\cdot,$$
we further choose $m$ satisfying
\begin{eqnarray}\label{asdfvbv26}
 m<\frac{2(2+\beta)}{(2+\alpha)\beta}.
\end{eqnarray}
Due to \eqref{asdfvbv25} and \eqref{asdfvbv26}, we thus derive that $m$ obeys  \eqref{dfdsgkp6}.
Consequently, the desired bound \eqref{cvhlp6} holds.
\end{proof}

\vskip .1in
Naturally, one may appeal to \cite[Proposition 3.5]{SWXY} to obtain more higher regularity estimate of the combined quantity $G$, which strongly requires the key condition
$$\sup_{0\leq t\leq T}\|G(t)\|_{L^{q}}<\infty, \qquad q>\frac{2}{\alpha}.$$
However, when $\alpha\leq\frac{2}{3}$, it holds
$$\min\left\{ \frac{2(2+\beta)}{(2+\alpha)\beta},\,\,\frac{2(2\beta+3\alpha-2)}{(2-\alpha)\beta}
\right\}\leq\frac{2}{\alpha}, $$
which yields the invalidness of \cite[Proposition 3.5]{SWXY} for this case. To overcome this difficulty caused by $\alpha\leq\frac{2}{3}$, we again appeal to the lower bounds for the fractional Laplacian. We also provide two methods.

\vskip .1in

\begin{center}
\textbf{Method 1}
\end{center}
\vskip .1in

We now prove the following $L^{\infty}$-estimate of both $\omega$ and the gradient of $\theta$.
\begin{lemma}
If $\beta>\max\{\frac{4-6\alpha}{\alpha},\ \frac{2-\alpha-2\alpha^{2}}{2\alpha}\}$ with $\alpha\in (\sqrt{13}-3,\frac{2}{3}]$,
then it holds
\begin{eqnarray}\label{tyukle12}
\|\omega(t)\|_{L^{\infty}}+\|\nabla\theta (t)\|_{L^{\infty}} \leq
C(t,\,u_{0},\,\theta_{0}).
\end{eqnarray}
\end{lemma}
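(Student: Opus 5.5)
We will mimic Method 1 of Part 1, with the vorticity replaced by the combined quantity $G=\omega-\mathcal{R}_{\alpha}\theta$ wherever an $L^p$ bound is needed. The first step is to apply the pointwise lower bound \eqref{abbdghw569} to the $G$-equation \eqref{t305} at a point where $|G|$ attains its maximum. Using \eqref{cvhlp6} with the largest admissible $m$ in \eqref{dfdsgkp6}, this should give
$$\frac{d}{dt}\|G\|_{L^{\infty}}+c\|G\|_{L^{\infty}}^{1+\frac{\alpha m}{2}}\leq \|[\mathcal{R}_{\alpha},u\cdot\nabla]\theta\|_{L^{\infty}}+\|\Lambda^{\beta-\alpha}\partial_{x_1}\theta\|_{L^{\infty}}.$$
We bound the right-hand side through \eqref{xcdksdr88}, embedding $\dot B^0_{\infty,1}$ into $L^\infty$. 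This gives
$$\|[\mathcal{R}_{\alpha},u\cdot\nabla]\theta\|_{L^{\infty}}\le C\|\nabla u\|_{L^\infty}\|\theta\|_{\dot B^{1-\alpha}_{\infty,1}}\le C\|\nabla u\|_{L^\infty}\|\theta\|_{L^\infty}^{\alpha^-}\|\nabla\theta\|_{L^\infty}^{1-\alpha^+}.$$
Similarly, $\|\Lambda^{\beta-\alpha}\partial_{x_1}\theta\|_{L^\infty}\le C(1+\|\nabla\theta\|_{L^\infty}^{1+\beta-\alpha+})$ by interpolating between $\|\theta\|_{L^\infty}$ and a slightly higher Hölder norm of $\nabla\theta$. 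This step may need an extra damping term for $\theta$. If it does, we will track $\|\nabla\theta\|_{L^\infty}$ through \eqref{tNew011}, whose damping $\|\nabla\theta\|_{L^\infty}^{1+\beta}$ absorbs powers up to $1+\beta$.

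The second step recovers $\omega$ from $G$. We have $\|\omega\|_{L^\infty}\le \|G\|_{L^\infty}+\|\mathcal{R}_\alpha\theta\|_{L^\infty}$, and $\|\mathcal{R}_\alpha\theta\|_{L^\infty}\le C(1+\|\nabla\theta\|_{L^\infty}^{1-\alpha+})$. In the same way, \eqref{tNew013} controls $\|\nabla u\|_{L^\infty}$ by $(1+\|G\|_{L^\infty}+\|\nabla\theta\|_{L^\infty}^{1-\alpha+})$ times a logarithm.

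The third step is the same bootstrap as in Lemma \ref{addftlem4}. Let $T_0$ be the first time at which $\|\nabla\theta\|_{L^\infty}=M$. Comparing the damping in the $G$-inequality with the forcing, we get on $[0,T_0]$
$$\|G\|_{L^\infty}\lesssim M^{\frac{2\kappa}{2+\alpha m}}\ \text{(up to logarithms)},$$
where $\kappa$ is the largest exponent of $\|\nabla\theta\|_{L^\infty}$ produced in the first step; the product term feeds back $\|G\|_{L^\infty}$ only sublinearly relative to the damping. Inserting this into \eqref{tNew011} then gives $\|\nabla\theta\|_{L^\infty}^{\beta}\lesssim \max\{M^{\frac{2\kappa}{2+\alpha m}},M^{1-\alpha}\}\log M$. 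Since $\beta>1-\alpha$, we obtain the contradiction $\|\nabla\theta(T_0)\|_{L^\infty}\le M/4$ once $M$ is large, provided $\beta>\frac{2\kappa}{2+\alpha m}$.

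The main obstacle is making this exponent inequality hold under exactly the stated hypothesis, together with the exponent bookkeeping in the first step. The constraint $\beta>\frac{2-\alpha-2\alpha^2}{2\alpha}$ should be what emerges when $m$ is pushed to the threshold in \eqref{dfdsgkp6}. We will check which of the two bounds in \eqref{dfdsgkp6} is active for $\alpha\in(\sqrt{13}-3,\frac23]$, and then tune $\kappa$, possibly by splitting $\Lambda^{\beta-\alpha}\partial_{x_1}\theta$ differently when $\beta<\alpha$, so that the condition reduces precisely to the stated one. Once \eqref{tyukle12} holds, $\omega$ follows by the second step and the higher regularity follows as in the proof of Part 1.
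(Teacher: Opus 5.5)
Your strategy closes and gives exactly the paper's threshold, but as written it has a hole. The hypotheses allow $\beta>\alpha$ (e.g.\ $\alpha=0.62$, $\beta=0.9$). In that case $\Lambda^{\beta-\alpha}\partial_{x_1}$ has order $1+\beta-\alpha>1$, so $\|\Lambda^{\beta-\alpha}\partial_{x_1}\theta\|_{L^\infty}$ cannot be bounded in terms of $\|\theta\|_{L^\infty}$ and $\|\nabla\theta\|_{L^\infty}$. No H\"older norm of $\nabla\theta$ is controlled anywhere in the scheme. The damping $\|\nabla\theta\|_{L^\infty}^{1+\beta}$ in \eqref{tNew011} only absorbs powers of $\|\nabla\theta\|_{L^\infty}$, so your suggested remedy does not help.

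The paper removes this case at the start. For $\alpha\in(\sqrt{13}-3,\frac23]$ one has $\alpha>\frac{4-2\alpha}{4+\alpha}$, so $\beta\ge\alpha$ is covered by Part 1 (Lemma \ref{addftlem4}, valid for all $\beta\ge\frac{4-2\alpha}{4+\alpha}$, $\alpha\le\frac23$). For $\beta<\alpha$ no loss is needed:
$\|\theta\|_{\dot B^{1-\alpha}_{\infty,1}}\le C\|\theta\|_{L^\infty}^{\alpha}\|\nabla\theta\|_{L^\infty}^{1-\alpha}$ and $\|\theta\|_{\dot B^{1+\beta-\alpha}_{\infty,1}}\le C\|\theta\|_{L^\infty}^{\alpha-\beta}\|\nabla\theta\|_{L^\infty}^{1+\beta-\alpha}$.
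Your exponents $\alpha^-$, $(1-\alpha)^+$ are not scale invariant for homogeneous norms, though a small loss would be harmless because the $m$-range is open.

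With that fixed, your argument organizes the same estimates differently from the paper. The paper uses no hitting time. It splits pointwise in $t$ according to whether $\frac c2\|\nabla\theta\|_{L^\infty}^\beta\ge C\|G\|_{L^\infty}D(t)$.
\begin{itemize}
\item In the first case it bounds $\sup_\tau\|\nabla\theta\|_{L^\infty}$ by a logarithmic self-consistency inequality, using $1+\beta>2-\alpha$.
\item In the second case it substitutes $\|\nabla\theta\|_{L^\infty}\lesssim(\|G\|_{L^\infty}D)^{1/\beta}$ into the $G$-inequality and splits again according to $D(t)\lessgtr C\|G\|_{L^\infty}^{\epsilon}$. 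Condition \eqref{bghkp74}, $m>\frac{2(1-\alpha)}{\alpha\beta}$, appears in the subcase $D\le C\|G\|_{L^\infty}^{\epsilon}$.
\end{itemize}
Your barrier argument in the style of Lemma \ref{addftlem4} reaches the same condition. It also makes the continuity-in-time reasoning explicit, which the paper's pointwise case split leaves implicit.

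To finish your exponent check: $\kappa=1+\beta-\alpha$. It dominates $2-2\alpha$ because $\beta>1-\alpha$, which follows from $\beta>\frac{2-\alpha-2\alpha^2}{2\alpha}$ when $\alpha\le\frac23$. Hence $\beta>\frac{2\kappa}{2+\alpha m}$ is exactly $m>\frac{2(1-\alpha)}{\alpha\beta}$. Compatibility with the bound $\frac{2(2\beta+3\alpha-2)}{(2-\alpha)\beta}$ in \eqref{dfdsgkp6} is exactly $\beta>\frac{2-\alpha-2\alpha^2}{2\alpha}$. The other bound, $\frac{2(2+\beta)}{(2+\alpha)\beta}$, only needs the weaker $\beta>\frac{2-3\alpha-\alpha^2}{\alpha}$.

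Two steps you gloss over must be written out.
\begin{itemize}
\item The product term $C\|G\|_{L^\infty}\|\nabla\theta\|_{L^\infty}^{1-\alpha}D(t)$ is not negligible. By itself it forces $\|G\|_{L^\infty}\lesssim(M^{1-\alpha}\ln M)^{2/(\alpha m)}$. This is dominated by $M^{2\kappa/(2+\alpha m)}$ precisely when $m>\frac{2(1-\alpha)}{\alpha\beta}$, so your claimed bound holds exactly in the regime you need, not because the feedback is sublinear.
\item The logarithm in \eqref{tNew013} contains $\int_0^t\|\omega\|_{L^\infty}^{\Gamma}$, hence $\|G\|_{L^\infty}$ itself. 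On $[0,T_0]$ you therefore need a second first-hitting-time argument for $\|G\|_{L^\infty}$ at level $M^{a}$ with $\frac{2\kappa}{2+\alpha m}<a<\beta$. Then $D=O(\ln M)$ and everything closes for $M$ large.
\end{itemize}
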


\begin{proof}
Before proving this lemma, it is worthwhile to mention that it suffices to consider the case $\alpha>\beta>\max\{\frac{4-6\alpha}{\alpha},\ \frac{2-\alpha-2\alpha^{2}}{2\alpha}\}$
as the case $\beta\geq\alpha$ belongs to Part 1 due to the simple fact $\alpha>\frac{4-2\alpha}{4+\alpha}$ with $\alpha\in (\sqrt{13}-3,\frac{2}{3}]$.
Arguing as for proving \eqref{fgup98}, we get by applying \eqref{abbdghw569} to \eqref{t305} that
\begin{align}
\frac{d}{dt}\|G(t)\|_{L^{\infty}}+c\frac{
\|G(t)\|_{L^{\infty}}^{1+\frac{\alpha m}{2}}}{\|G\|_{L^{m}}^{\frac{\alpha m}{2} }}
&\leq  C\|[\mathcal {R}_{\alpha},\,u\cdot\nabla]\theta\|_{L^{\infty}}+C
\|\Lambda^{\beta-\alpha}\partial_{x_{1}}\theta\|_{L^{\infty}},\nonumber
\end{align}
which along with \eqref{xcdksdr88} yields
\begin{align}\label{fasaw68}
\frac{d}{dt}\|G(t)\|_{L^{\infty}}+c\frac{
\|G(t)\|_{L^{\infty}}^{1+\frac{\alpha m}{2}}}{\|G\|_{L^{m}}^{\frac{\alpha m}{2} }}
&\leq  C\|[\mathcal {R}_{\alpha},\,u\cdot\nabla]\theta\|_{\dot{B}_{\infty,1}^{0}}+C
\|\Lambda^{\beta-\alpha}\partial_{x_{1}}\theta\|_{\dot{B}_{\infty,1}^{0}}\nonumber\\
&\leq  C\|\nabla u\|_{L^{\infty}}\|\theta\|_{\dot{B}_{\infty,1}^{1-\alpha}}+C
\|\theta\|_{\dot{B}_{\infty,1}^{1+\beta-\alpha}}\nonumber\\
&\leq  C\|\nabla u\|_{L^{\infty}}\|\theta\|_{\dot{B}_{\infty,\infty}^{0}}^{\alpha}
\|\theta\|_{\dot{B}_{\infty,\infty}^{1}}^{1-\alpha}+C
\|\theta\|_{\dot{B}_{\infty,\infty}^{0}}^{\alpha-\beta}
\|\theta\|_{\dot{B}_{\infty,\infty}^{1}}^{1+\beta-\alpha}\nonumber\\
&\leq  C\|\nabla u\|_{L^{\infty}}\|\theta\|_{L^{\infty}}^{\alpha}
\|\nabla\theta\|_{L^{\infty}}^{1-\alpha}+C
\|\theta\|_{L^{\infty}}^{\alpha-\beta}
\|\nabla\theta\|_{L^{\infty}}^{1+\beta-\alpha}\nonumber\\
&\leq  C\|\nabla u\|_{L^{\infty}}
\|\nabla\theta\|_{L^{\infty}}^{1-\alpha}+C
\|\nabla\theta\|_{L^{\infty}}^{1+\beta-\alpha}.
\end{align}
In light of \eqref{cvhlp6}, we derive from \eqref{fasaw68} that
 \begin{align}\label{bghkp68}
\frac{d}{dt}\|G(t)\|_{L^{\infty}}+c
\|G(t)\|_{L^{\infty}}^{1+\frac{\alpha m}{2}}\leq
C\|\nabla u\|_{L^{\infty}}
\|\nabla\theta\|_{L^{\infty}}^{1-\alpha}+C
\|\nabla\theta\|_{L^{\infty}}^{1+\beta-\alpha},
\end{align}
where $m$ satisfies \eqref{dfdsgkp6}.
Recalling \eqref{tNew011}, we get
 \begin{eqnarray}\label{bghkp69}
\frac{d}{dt}\|\nabla\theta(t)\|_{L^{\infty}}+c
\|\nabla\theta(t)\|_{L^{\infty}}^{1+\beta}
\leq C\|\nabla u\|_{L^{\infty}}\|\nabla\theta(t)\|_{L^{\infty}}.
\end{eqnarray}
Direct computation yields
\begin{align}\label{bdffhp98}
\|\omega\|_{L^{\infty}}&\leq
 \|G\|_{L^{\infty}}+\|\mathcal {R}_{\alpha}\theta\|_{L^{\infty}}
\nonumber\\
&\leq
C(\|G\|_{L^{\infty}}+\|\mathcal {R}_{\alpha}\theta\|_{\dot{B}_{\infty,1}^{0}})
\nonumber\\
&\leq
C(\|G\|_{L^{\infty}}+\|\theta\|_{\dot{B}_{\infty,1}^{1-\alpha}})
\nonumber\\
&\leq
C(\|G\|_{L^{\infty}}+\|\theta\|_{L^{\infty}}^{\alpha}
\|\nabla\theta\|_{L^{\infty}}^{1-\alpha})
\nonumber\\
&\leq
C(\|G\|_{L^{\infty}}+
\|\nabla\theta\|_{L^{\infty}}^{1-\alpha}).
\end{align}
Applying \eqref{tNew013} to \eqref{bghkp68} and noting \eqref{bdffhp98}, it follows
\begin{align}\label{bghkp70}
 \frac{d}{dt}\|G(t)\|_{L^{\infty}}+c
\|G(t)\|_{L^{\infty}}^{1+\frac{\alpha m}{2}}\leq&
C(1+\|\omega\|_{L^{\infty}})
\|\nabla\theta\|_{L^{\infty}}^{1-\alpha}D(t)+C
\|\nabla\theta\|_{L^{\infty}}^{1+\beta-\alpha}\nonumber\\
\leq&
C\|G\|_{L^{\infty}}\|\nabla\theta\|_{L^{\infty}}^{1-\alpha}D(t)\nonumber\\
&+C
(\|\nabla\theta\|_{L^{\infty}}^{1-\alpha}+\|\nabla\theta\|_{L^{\infty}}^{2-2\alpha})D(t)+C
\|\nabla\theta\|_{L^{\infty}}^{1+\beta-\alpha}.
\end{align}
where the increasing function $D(t)$ is given by
$$D(t)=C\ln
\Big(e+\int_{0}^{t}{\big(1+\|G(\tau)\|_{L^{\infty}}+\|\nabla\theta(\tau)
\|_{L^{\infty}}
\big)^{\Gamma}\,d\tau}\Big).$$
As a result, we get from \eqref{bghkp70} that
\begin{align}\label{bghkp71}
 \frac{d}{dt}\|G(t)\|_{L^{\infty}}+c
\|G(t)\|_{L^{\infty}}^{1+\frac{\alpha m}{2}}\leq&
C\|G(t)\|_{L^{\infty}}\|\nabla\theta(t)\|_{L^{\infty}}^{1-\alpha}D(t)+C
\|\nabla\theta(t)\|_{L^{\infty}}^{1+\beta-\alpha}\nonumber\\&+C
(\|\nabla\theta(t)\|_{L^{\infty}}^{1-\alpha}
+\|\nabla\theta(t)\|_{L^{\infty}}^{2-2\alpha})D(t).
\end{align}
Similar arguments may be employed to \eqref{bghkp69} to derive
\begin{align}\label{bghkp72}
\frac{d}{dt}\|\nabla\theta(t)\|_{L^{\infty}}+c
\|\nabla\theta(t)\|_{L^{\infty}}^{1+\beta}
\leq & C\|G(t)\|_{L^{\infty}}\|\nabla\theta(t)\|_{L^{\infty}}D(t) \nonumber\\&+C
(\|\nabla\theta(t)\|_{L^{\infty}} +\|\nabla\theta(t)\|_{L^{\infty}}^{2-\alpha})D(t).
\end{align}
We divide the proof into two cases, namely
\begin{align}
\mbox{Case 1}: \frac{c}{2}\|\nabla\theta(t)\|_{L^{\infty}}^{\beta}\geq C\|G(t)\|_{L^{\infty}}D(t),\qquad
\mbox{Case 2}: \frac{c}{2}\|\nabla\theta(t)\|_{L^{\infty}}^{\beta}< C\|G(t)\|_{L^{\infty}}D(t).\nonumber
 \end{align}
Let us begin with $\mbox{Case 1}$. On the one hand, if it holds
$$\frac{c}{2}\|\nabla\theta(t)\|_{L^{\infty}}^{1+\beta}\geq C
(\|\nabla\theta(t)\|_{L^{\infty}}+\|\nabla\theta(t)\|_{L^{\infty}}^{2-\alpha})D(t),$$
then \eqref{bghkp72} reduces to
$$\frac{d}{dt}\|\nabla\theta(t)\|_{L^{\infty}}\leq 0,$$
which implies
$$\|\nabla\theta(t)\|_{L^{\infty}}\leq C.$$
Of course, it allows us to show
\begin{eqnarray}\label{KEY2}
\|\omega(t)\|_{L^{\infty}}\leq \|\omega_{0}\|_{L^{\infty}}+\int_{0}^{t}{\|\nabla \theta(\tau)\|_{L^{\infty}}\,d \tau}\leq C,
\end{eqnarray}
which yields
\begin{align}
\|\omega(t)\|_{L^{\infty}}+\|\nabla\theta (t)\|_{L^{\infty}} \leq
C.\nonumber
\end{align}
On the other hand, if it holds
$$\frac{c}{2}\|\nabla\theta(t)\|_{L^{\infty}}^{1+\beta}<C
(\|\nabla\theta(t)\|_{L^{\infty}}+\|\nabla\theta(t)\|_{L^{\infty}}^{2-\alpha})D(t),$$
then we discover that
\begin{align} \label{dfuyt6983}
\frac{c}{2}\|\nabla\theta(t)\|_{L^{\infty}}^{1+\beta}<C
(1+\|\nabla\theta(t)\|_{L^{\infty}}^{2-\alpha})D(t).
\end{align}
Moreover, regarding $\mbox{Case 1}$, we find
\begin{align} \label{dfuyt6984}
D(t)&\leq C\ln
\Big(e+\int_{0}^{t}{\big(1+\|\nabla\theta(\tau)\|_{L^{\infty}}^{\beta}+\|\nabla\theta(\tau)
\|_{L^{\infty}}
\big)^{\Gamma}\,d\tau}\Big)\nonumber\\ &\leq C\ln
\Big(e+\int_{0}^{t}{\big(1+\|\nabla\theta(\tau)
\|_{L^{\infty}}
\big)^{\Gamma}\,d\tau}\Big).
\end{align}
Denoting
$$\Theta(t)=\sup_{0\leq \tau\leq t}\|\nabla\theta(\tau)
\|_{L^{\infty}},$$
and noting the increasing property of $D(t)$, we thus deduce from \eqref{dfuyt6983} that
\begin{align}
\frac{c}{2}\Theta(t)^{1+\beta}\leq C\left(1+\Theta(t)^{2-\alpha}\right)
D(t),\nonumber
\end{align}
which along with \eqref{dfuyt6984} ensures
\begin{align} \label{dfuyt6986}
\frac{c}{2}\Theta(t)^{1+\beta}&\leq C\left(1+\Theta(t)^{2-\alpha}\right)
\ln
\Big(e+\int_{0}^{t}{\big(1+\|\nabla\theta(\tau)
\|_{L^{\infty}}
\big)^{\Gamma}\,d\tau}\Big)\nonumber\\
&\leq C\left(1+\Theta(t)^{2-\alpha}\right)
\ln
\Big(e+\int_{0}^{t}{\big(1+\Theta(\tau)
\big)^{\Gamma}\,d\tau}\Big)\nonumber\\
&\leq C
\ln
\Big(e+\int_{0}^{t}{\big(1+\Theta(t)
\big)^{\Gamma}\,d\tau}\Big)\nonumber\\
&\leq C\left(1+\Theta(t)^{2-\alpha}\right)
\ln
\big(e+\Theta(t)
\big),
\end{align}
where we used the increasing property of $\Theta(t)$.
It thus follows from \eqref{dfuyt6986} that
\begin{align} \label{sdkp1q}
\frac{c}{2}\Theta(t)^{1+\beta}\leq C\left(1+\Theta(t)^{2-\alpha}\right)
\ln
\big(e+\Theta(t)
\big).
\end{align}
Note that $1+\beta>2-\alpha$ due to $\beta>1-\alpha$, we infer from \eqref{sdkp1q} that
$$\Theta(t)\leq C,$$
which implies
$$\|\nabla\theta(t)\|_{L^{\infty}}\leq C.$$
This along with \eqref{KEY2} gives \eqref{tyukle12}.
Let us now move to $\mbox{Case 2}$. We divide this case into two sub-cases
\begin{align}
\mbox{Case 21}: D(t)\leq C\|G\|_{L^{\infty}}^{\epsilon},\qquad
\mbox{Case 22}:D(t)> C\|G\|_{L^{\infty}}^{\epsilon},\nonumber
 \end{align}
where $\epsilon=\epsilon(\alpha,\beta)>0$ is suitable small. For $\mbox{Case 21}$, we deduce from \eqref{bghkp71} that
\begin{align} \label{bghkp73}
 \frac{d}{dt}\|G(t)\|_{L^{\infty}}+c
\|G(t)\|_{L^{\infty}}^{1+\frac{\alpha m}{2}}\leq&
C\|G\|_{L^{\infty}}^{1+\frac{1-\alpha}{\beta}+\frac{\epsilon}{\beta}+\epsilon}
+C\|G\|_{L^{\infty}}^{\frac{2-2\alpha+\epsilon}{\beta}+\epsilon}\nonumber\\&+C
\|G\|_{L^{\infty}}^{\frac{(1+\epsilon)(1+\beta-\alpha)}{\beta}}+C
\|G\|_{L^{\infty}}^{\frac{1-\alpha+(1+\beta-\alpha)\epsilon}{\beta}}\nonumber\\
\leq& C+
C\|G\|_{L^{\infty}}^{1+\frac{1-\alpha}{\beta}+\frac{\epsilon}{\beta}+\epsilon}
+C\|G\|_{L^{\infty}}^{\frac{2-2\alpha+\epsilon}{\beta}+\epsilon}\nonumber\\&+C
\|G\|_{L^{\infty}}^{\frac{(1+\epsilon)(1+\beta-\alpha)}{\beta}}.
\end{align}
If it holds
\begin{align} \label{bghkp74}
m>\frac{2}{\alpha}\max\left\{\frac{1-\alpha}{\beta},\ \ \frac{2-2\alpha-\beta}{\beta}\right\}\equiv \frac{2(1-\alpha)}{\alpha\beta},
\end{align}
then by taking $\epsilon>0$ small enough, we derive from \eqref{bghkp73} that
$$\frac{d}{dt}\|G(t)\|_{L^{\infty}}+\frac{c}{2}
\|G(t)\|_{L^{\infty}}^{1+\frac{\alpha m}{2}}\leq C,$$
which gives
\begin{align}\label{bghkdf8}
\|G(t)\|_{L^{\infty}} \leq C.
\end{align}
Moreover, we have
\begin{align}\label{bghkdf9}
D(t) \leq C\|G\|_{L^{\infty}}^{\epsilon}\leq C.
\end{align}
Concerning \eqref{bghkdf8} and \eqref{bghkdf9}, we derive from $\mbox{Case 2}$ that
\begin{align}
\frac{c}{2}\|\nabla\theta(t)\|_{L^{\infty}}^{\beta}< C\|G(t)\|_{L^{\infty}}D(t)\leq C,\nonumber
\end{align}
which yields
\begin{align}\label{bgsd67}
\|\nabla\theta(t)\|_{L^{\infty}}\leq C.
\end{align}
Combining \eqref{bgsd67} and \eqref{KEY2} implies \eqref{tyukle12}.
For $\mbox{Case 22}$, we note that
$$\frac{c}{2}\|\nabla\theta(t)\|_{L^{\infty}}^{\beta}< C\|G\|_{L^{\infty}}D(t)< CD(t)^{1+\frac{1}{\epsilon}},$$
which shows
\begin{align}
D(t)&=C\ln
\Big(e+\int_{0}^{t}{\big(1+\|G(\tau)\|_{L^{\infty}}+\|\nabla\theta(\tau)
\|_{L^{\infty}}
\big)^{\Gamma}\,d\tau}\Big)\nonumber\\
&\leq C\ln
\Big(e+\int_{0}^{t}{\big(1+D(\tau)^{\frac{1}{\epsilon}}
+D(\tau)^{\frac{\epsilon+1}{\beta\epsilon}}
\big)^{\Gamma}\,d\tau}\Big)
\nonumber\\
&\leq C\ln
\Big(e+\int_{0}^{t}{\big(1+D(\tau)^{\frac{\epsilon+1}{\beta\epsilon}}
\big)^{\Gamma}\,d\tau}\Big)\nonumber\\
&\leq C\ln
\Big(e+\int_{0}^{t}{\big(1+D(t)^{\frac{\epsilon+1}{\beta\epsilon}}
\big)^{\Gamma}\,d\tau}\Big)\nonumber\\
&\leq C\ln
\big(e+ D(t) \big).\nonumber
\end{align}
As a result, one gets
$$
D(t)\leq C\ln
\big(e+ D(t) \big),
$$
which of course implies
$$D(t)\leq C.$$
Furthermore, it yields
$$\|G(t)\|_{L^{\infty}} \leq C.$$
Keeping in mind \eqref{bgsd67}, the desired estimate \eqref{tyukle12} is still valid for $\mbox{Case 22}$.
Consequently, \eqref{tyukle12} is valid for each case. Finally, let us check the workable of $m$. In fact, combining \eqref{dfdsgkp6} and \eqref{bghkp74}, it should be satisfied
\begin{eqnarray}\label{dgvzpkq11}
\max\left\{2,\ \ \frac{2(1-\alpha)}{\alpha\beta} \right\}< m <\min\left\{ \frac{2(2+\beta)}{(2+\alpha)\beta},\,\,\frac{2(2\beta+3\alpha-2)}{(2-\alpha)\beta}
\right\}.
\end{eqnarray}
Hence, the following condition ensures the workable of $m$
$$\beta>\max\left\{\frac{2-3\alpha}{\alpha},\ \ \frac{2-3\alpha-\alpha^{2}}{\alpha},\ \ \frac{2-\alpha-2\alpha^{2}}{2\alpha} \right\}\equiv\frac{2-\alpha-2\alpha^{2}}{2\alpha}.$$
Now collecting all the restrictions on $\beta$ leads to
$$\beta>\max\left\{1-\alpha,\ \frac{4-6\alpha }{\alpha},\ \frac{2-\alpha-2\alpha^{2}}{2\alpha}\right\}.$$
When $\alpha\in (\sqrt{13}-3,\frac{2}{3}]$, we infer
$$1-\alpha\leq \frac{2-\alpha-2\alpha^{2}}{2\alpha},\qquad \frac{4-6\alpha}{\alpha}< \frac{4-2\alpha}{4+\alpha},\qquad \frac{2-\alpha-2\alpha^{2}}{2\alpha}< \frac{4-2\alpha}{4+\alpha}.$$
Finally, $\beta$ should be satisfied
$$\beta>\max\left\{\frac{4-6\alpha }{\alpha},\ \frac{2-\alpha-2\alpha^{2}}{2\alpha}\right\}.$$
We thus complete the proof of the lemma.
\end{proof}

\vskip .1in
\begin{proof}[The proof of Part 2 of Theorem \ref{Th3}]
Keeping in mind the proof of Part 1 and \eqref{tyukle12}, we are able to finish the proof of Part 2 of Theorem \ref{Th3}.
\end{proof}

\vskip .1in

\begin{center}
\textbf{Method 2}
\end{center}
\vskip .1in
The proof here is also based on a variant of nonlinear lower bound for the fractional Laplacian and the direct energy method.
\begin{lemma}
If $\beta>\max\{\frac{4-6\alpha}{\alpha},\ \frac{2-\alpha-2\alpha^{2}}{2\alpha}\}$ with $\alpha\in (\sqrt{13}-3,\frac{2}{3}]$,
then it holds for sufficiently large $p\in[2,\infty)$
\begin{eqnarray}\label{yyadkb01}
\|\omega(t)\|_{L^{p}}+\|\nabla\theta (t)\|_{L^{\beta^{-}p}} \leq
C(t,\,u_{0},\,\theta_{0}).
\end{eqnarray}
\end{lemma}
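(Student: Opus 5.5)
We follow Method 2 of Part 1, now built on $G$ instead of $\omega$, since in Part 2 the only a priori integrability available is the $L^{m}$-bound \eqref{cvhlp6} for $G$ with $m$ in the range \eqref{dfdsgkp6}. As in Method 1 of Part 2, we may assume $\alpha>\beta$, the case $\beta\ge\alpha$ being covered by Part 1. Fix $m$ satisfying \eqref{dgvzpkq11}; this interval is nonempty under $\beta>\max\{\frac{4-6\alpha}{\alpha},\frac{2-\alpha-2\alpha^{2}}{2\alpha}\}$, and it gives $m>\frac{2(1-\alpha)}{\alpha\beta}$.

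\textbf{Step 1: the $L^{p}$ balance law for $G$.} Multiply \eqref{t305} by $|G|^{p-2}G$ and apply the pointwise bound \eqref{tdfhb12} with the $L^{m}$-norm of $G$. Integrating and using \eqref{cvhlp6}, we get
$$\frac{d}{dt}\|G\|_{L^{p}}^{p}+c\|G\|_{L^{p+\frac{\alpha m}{2}}}^{p+\frac{\alpha m}{2}}\le C\int_{\mathbb{R}^{2}}\big(|[\mathcal R_\alpha,u\cdot\nabla]\theta|+|\Lambda^{\beta-\alpha}\partial_{x_1}\theta|\big)|G|^{p-1}\,dx .$$

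\textbf{Step 2: the $L^{r}$ balance law for $\nabla\theta$.} Apply \eqref{yyadkb02} with $p=\infty$ to \eqref{sdghg255}; this is \eqref{sdghg257}. The resulting damping term is $c\|\nabla\theta\|_{L^{r+\beta}}^{r+\beta}$, and the forcing is $\mathcal Z_2=\int|\nabla u||\nabla\theta|^{r}$.

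\textbf{Step 3: bounding the forcing terms.} In each forcing term, write $\nabla u=\nabla u_G+\nabla u_\theta$. Bound $\nabla u_G$ in $L^{q}$ by $\|G\|_{L^{q}}$, and $\nabla u_\theta$ by $\|\Lambda^{1-\alpha}\theta\|_{L^{q}}$. The latter is controlled by interpolation between $\|\theta\|_{L^\infty}$ and $\|\nabla\theta\|_{L^{r+\beta}}$, with Gagliardo--Nirenberg and the $L^2$ control of $\theta$. Similarly:
\begin{itemize}
\item for $\Lambda^{\beta-\alpha}\partial_{x_1}\theta$, note that $\beta<\alpha$, so it has order $1-(\alpha-\beta)<1$, and it is controlled by $\|\nabla\theta\|_{L^{r+\beta}}^{\,\theta_1}\|\theta\|_{L^\infty}^{1-\theta_1}$ with $\theta_1<1$;
\item for the commutator, use \eqref{sdfghq66}, which gives $\|[\mathcal R_\alpha,u\cdot\nabla]\theta\|_{L^{q}}\le C\|\nabla u\|_{L^{q_1}}\|\Lambda^{1-\alpha}\theta\|_{L^{q_2}}$.
\end{itemize}
Then apply Hölder, interpolate $G$ between $L^{2}$ (bounded by \eqref{sfdsjkp2}) and $L^{p+\frac{\alpha m}{2}}$, and finish with Young. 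The goal is
$$\frac{d}{dt}\big(\|G\|_{L^{p}}^{p}+\|\nabla\theta\|_{L^{r}}^{r}\big)+c\|G\|_{L^{p+\frac{\alpha m}{2}}}^{p+\frac{\alpha m}{2}}+c\|\nabla\theta\|_{L^{r+\beta}}^{r+\beta}\le C\big(1+\|G\|_{L^{p}}^{p}+\|\nabla\theta\|_{L^{r}}^{r}\big),$$
and Gronwall then closes the estimate.

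\textbf{Step 4: choosing the exponents.} Take $r=\beta^{-}p$. Counting homogeneities exactly as in \eqref{sdghg259}–\eqref{sdghg2510} gives two constraints:
\begin{itemize}
\item the term $\int|\nabla u_G||\nabla\theta|^r$ forces $p>\frac{r+\beta}{\beta}-\frac{\alpha m}{2}$;
\item the worst term in the $G$-equation forces $p\lesssim\frac{(\alpha m+2)r}{2}\cdot\frac{\beta}{1-\alpha}$, roughly. Here $\nabla\theta$ enters only through the power $|\nabla\theta|^{1-\alpha}$ (times $\nabla u$), respectively $|\nabla\theta|^{1+\beta-\alpha}$, rather than linearly.
\end{itemize}
For large $p$ these are compatible precisely when $m>\frac{2(1-\alpha)}{\alpha\beta}$, which is \eqref{bghkp74}. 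Finally, recover $\omega=G+\mathcal R_\alpha\theta$ in $L^{p}$: bound $\|\mathcal R_\alpha\theta\|_{L^{p}}\le C\|\Lambda^{1-\alpha}\theta\|_{L^{p}}$, and interpolate between $\|\theta\|_{L^{2}\cap L^\infty}$ and $\|\nabla\theta\|_{L^{\beta^- p}}$ for $p$ large. This gives \eqref{yyadkb01}.

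\textbf{Main obstacle.} The hardest part is the cross term $\int|\nabla u||\nabla\theta|^{1-\alpha}|G|^{p-1}$ coming from the commutator. It couples both unknowns nonlinearly, and its exponent bookkeeping must close using only the damping gains $\frac{\alpha m}{2}$ and $\beta$. I would first try to split it with a three-factor Young inequality, with weights chosen so that each factor lands on one of the two damping norms. If that fails, I would treat the $u_\theta$ part separately, since $\nabla u_\theta$ is just $\Lambda^{1-\alpha}\theta$, which is lower order in $\nabla\theta$.
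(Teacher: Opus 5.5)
Your proposal is essentially the paper's own proof. It uses the same coupled functional $\|G\|_{L^{p}}^{p}+\|\nabla\theta\|_{L^{r}}^{r}$ driven by the lower bounds \eqref{tdfhb12} (with the $L^{m}$ bound \eqref{cvhlp6}) and \eqref{yyadkb02}, the same splitting $u=u_{G}+u_{\theta}$ with the commutator estimate \eqref{sdfghq66}, the choice $r=\beta^{-}p$ with the key condition $m>\frac{2(1-\alpha)}{\alpha\beta}$, and the same recovery of $\omega=G+\mathcal{R}_{\alpha}\theta$ in $L^{p}$; your explicit reduction to $\beta<\alpha$, needed so that $\Lambda^{\beta-\alpha}\partial_{x_{1}}\theta$ has order below one, is only implicit in the paper's Method 2. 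The one slip is your ``rough'' upper constraint in Step 4: it should read $p\lesssim\frac{\alpha m r}{2(1-\alpha)}$ (from the $u_{G}$-part of the commutator) and $p\lesssim\frac{(\alpha m+2)r}{2(1+\beta-\alpha)}$ (from $\Lambda^{\beta-\alpha}\partial_{x_{1}}\theta$), and with $r=\beta^{-}p$ both reduce to $m>\frac{2(1-\alpha)}{\alpha\beta}$ exactly as you conclude.
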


\begin{proof}
Multiplying \eqref{t305} by $|G|^{p-2}G$ and making use of \eqref{tdfhb12}, we arrive at the pointwise estimate
\begin{align}\label{yyadkb03}
\frac{1}{p}\left\{\partial_{t}|G|^{p}+(u\cdot \nabla)|G|^{p}+\Lambda^{\alpha}|G|^{p}\right\}+c\frac{|G|^{p+\frac{\alpha m}{2}}}{\|G\|_{L^{m}}^{\frac{\alpha m}{2}}}\leq&  [\mathcal {R}_{\alpha},\,u\cdot\nabla]\theta|G|^{p-2}G
\nonumber\\&+\Lambda^{\beta-\alpha}\partial_{x_{1}}\theta|G|^{p-2}G.
\end{align}
Integrating \eqref{yyadkb03} over $\mathbb{R}^2$ and invoking \eqref{cvhlp6}, it yields
\begin{align} \label{yyadkb04}
\frac{d}{dt}\|G(t)\|_{L^{p}}^{p}+c
\|G\|_{L^{p+\frac{\alpha m}{2}}}^{p+\frac{\alpha m}{2}}
\leq &\underbrace{C\int_{\mathbb{R}^{2}}[\mathcal {R}_{\alpha},\,u\cdot\nabla]\theta|G|^{p-2}G\,dx}_{I_{1}}
\nonumber\\&
+\underbrace{C\int_{\mathbb{R}^{2}}\Lambda^{\beta-\alpha}\partial_{x_{1}}\theta|G|^{p-2}G\,dx}
_{I_{2}},
\end{align}
where $m$ satisfies \eqref{dfdsgkp6}.
Recalling \eqref{sdghg257}, we have
\begin{align} \label{yyadkb07}
\frac{d}{dt}\|\nabla\theta(t)\|_{L^{r}}^{r}+c
\|\nabla\theta\|_{L^{r+\beta}}^{r+\beta}
\leq &\underbrace{-C\int_{\mathbb{R}^{2}}(\nabla u \cdot \nabla) \theta|\nabla\theta|^{r-2}\nabla\theta\,dx}_{I_{3}}.
\end{align}
Putting \eqref{yyadkb04} and \eqref{yyadkb07} together yields
\begin{align} \label{yyadkb08}
\frac{d}{dt}(\|G(t)\|_{L^{p}}^{p}+\|\nabla\theta(t)\|_{L^{r}}^{r})+c
\|G\|_{L^{p+\frac{\alpha m}{2}}}^{p+\frac{\alpha m}{2}}+c
\|\nabla\theta\|_{L^{r+\beta}}^{r+\beta}
\leq I_{1}+I_{2}+I_{3}.
\end{align}
Due to $u=u_{G}+u_{\theta}$, we get
$$I_{3}=-C\int_{\mathbb{R}^{2}}(\nabla u_{G} \cdot \nabla) \theta|\nabla\theta|^{r-2}\nabla\theta\,dx-C\int_{\mathbb{R}^{2}}(\nabla u_{\theta} \cdot \nabla) \theta|\nabla\theta|^{r-2}\nabla\theta\,dx\triangleq I_{31}+I_{32}.$$
If we take $p$ and $r$ as
\begin{align} \label{yyadkb09}
p>\frac{r+\beta}{\beta}-\frac{\alpha m}{2},
\end{align}
then we obtain
\begin{align}
I_{31}&\leq C\|\nabla u_{G}\|_{L^{\frac{r+\beta}{\beta}}}\|\nabla\theta\|_{L^{r+\beta}}^{r}
\nonumber\\&\leq C\|G\|_{L^{\frac{r+\beta}{\beta}}}\|\nabla\theta\|_{L^{r+\beta}}^{r}\nonumber\\&\leq C\|G\|_{L^{2}}^{1-\vartheta}\|G\|_{L^{p+\frac{\alpha m}{2}}}^{\vartheta}\|\nabla\theta\|_{L^{r+\beta}}^{r}\nonumber\\&\leq C \|G\|_{L^{p+\frac{\alpha m}{2}}}^{\vartheta}\|\nabla\theta\|_{L^{r+\beta}}^{r}
\nonumber\\&\leq C+
\frac{c}{16}
\|G\|_{L^{p+\frac{\alpha m}{2}}}^{p+\frac{\alpha m}{2}}+\frac{c}{16}
\|\nabla\theta\|_{L^{r+\beta}}^{r+\beta} \nonumber
\end{align}
with $\vartheta\in (0,1)$.
According to $\beta>1-\alpha$ and \eqref{dfexbg6}, one may derive
\begin{align}
I_{32}&\leq C\|\nabla u_{\theta}\|_{L^{\frac{r+\beta}{\beta}}}\|\nabla\theta\|_{L^{r+\beta}}^{r}
\nonumber\\&\leq C\|\Lambda^{1-\alpha}\theta\|_{L^{\frac{r+\beta}{\beta}}}
\|\nabla\theta\|_{L^{r+\beta}}^{r}\nonumber\\&\leq C
\|\Lambda^{1-\alpha}\theta\|_{\dot{B}_{\infty,\infty}^{-(1-\alpha)}}^{1-\beta}
\|\Lambda^{1-\alpha}\theta\|_{\dot{B}_{r+\beta,r+\beta}^{\frac{(1-\alpha)
(1-\beta)}{\beta}}}^{\beta}
\|\nabla\theta\|_{L^{r+\beta}}^{r}\nonumber\\&\leq C
\|\theta\|_{L^{\infty}}^{1-\beta}
\|\theta\|_{\dot{B}_{r+\beta,r+\beta}^{\frac{1-\alpha
}{\beta}}}^{\beta}
\|\nabla\theta\|_{L^{r+\beta}}^{r}\nonumber\\&\leq C
\|\theta\|_{L^{\infty}}^{1-\beta}
\|\theta\|_{\dot{B}_{r+\beta,\infty}^{0}}^{\alpha+\beta-1}
\|\theta\|_{\dot{B}_{r+\beta,\infty}^{1}}^{1-\alpha}
\|\nabla\theta\|_{L^{r+\beta}}^{r}\nonumber\\&\leq C
\|\theta\|_{L^{\infty}}^{1-\beta}
\|\theta\|_{L^{r+\beta}}^{\alpha+\beta-1}
\|\nabla\theta\|_{L^{r+\beta}}^{r+1-\alpha}\nonumber\\&\leq C
\|\nabla\theta\|_{L^{r+\beta}}^{r+1-\alpha}\nonumber\\&\leq C +\frac{c}{16}
\|\nabla\theta\|_{L^{r+\beta}}^{r+\beta}. \nonumber
\end{align}
Therefore, one derives
\begin{align} \label{yyadkb10}
I_{3} \leq  C+
\frac{c}{8}
\|G\|_{L^{p+\frac{\alpha m}{2}}}^{p+\frac{\alpha m}{2}}+\frac{c}{8}
\|\nabla\theta\|_{L^{r+\beta}}^{r+\beta}.
\end{align}
Regarding $I_{2}$, we have
\begin{align}\label{svkpwev11}
I_{2}&\leq C\|\Lambda^{1+\beta-\alpha}\theta\|_{L^{\infty}}\|G\|_{L^{p-1}}^{p-1}
\nonumber\\&\leq C
\|\theta\|_{L^{\infty}}^{1-\mu_{22}}\|\nabla\theta\|_{L^{r+\beta}}^{\mu_{22}}
\|G\|_{L^{2}}^{(p-1)(1-\mu_{21})}\|G\|_{L^{p+\frac{\alpha m}{2}}}^{(p-1)\mu_{21}}
\nonumber\\&\leq C \|\nabla\theta\|_{L^{r+\beta}}^{\mu_{22}}
 \|G\|_{L^{p+\frac{\alpha m}{2}}}^{(p-1)\mu_{21}}
\nonumber\\&\leq C+
\frac{c}{8}
\|G\|_{L^{p+\frac{\alpha m}{2}}}^{p+\frac{\alpha m}{2}}+\frac{c}{8}
\|\nabla\theta\|_{L^{r+\beta}}^{r+\beta},
\end{align}
where $\mu_{21},\mu_{22}\in (0,1)$ are given by
$$\mu_{21}=\frac{(p-3)(2p+\alpha m)}{(p-1)(2p+\alpha m-4)},\quad \mu_{22}=\frac{(1+\beta-\alpha)(r+\beta)}{r+\beta-2}.$$
To guarantee \eqref{svkpwev11}, it requires
$$\frac{(p-1)\mu_{21}}{p+\frac{\alpha m}{2}}+\frac{\mu_{22}}{r+\beta}<1,$$
which is equal to
\begin{align} \label{svkpwev12}
(1+\beta-\alpha)(2p+\alpha m-4)<(r+\beta-2)(\alpha m+2).
\end{align}
Keeping in mind $u=u_{G}+u_{\theta}$, we see that
$$I_{1}=C\int_{\mathbb{R}^{2}}[\mathcal {R}_{\alpha},\,u_{\theta}\cdot\nabla]\theta|G|^{p-2}G\,dx+C\int_{\mathbb{R}^{2}}[\mathcal {R}_{\alpha},\,u_{G}\cdot\nabla]\theta|G|^{p-2}G\,dx\triangleq I_{11}+I_{12}.$$
In view of \eqref{sdfghq66} and \eqref{dfexbg6}, one derives
\begin{align}
I_{11}&\leq C\|[\mathcal {R}_{\alpha},\,u_{\theta}\cdot\nabla]\theta\|_{L^{\frac{r+\beta}{2(1-\alpha)}}}
\|G\|_{L^{\frac{(r+\beta)(p-1)}{3\alpha+\beta-2}}}^{p-1}
\nonumber\\&\leq C\|\Lambda^{1-\alpha}\theta\|_{L^{\frac{r+\beta}{1-\alpha}}}^{2}
\|G\|_{L^{\frac{(r+\beta)(p-1)}{r+2\alpha+\beta-2}}}^{p-1}\nonumber\\&\leq C
\|\Lambda^{1-\alpha}\theta\|_{\dot{B}_{\infty,\infty}^{-(1-\alpha)}}^{2\alpha}
\|\Lambda^{1-\alpha}\theta\|_{\dot{B}_{r+\beta,r+\beta}^{\alpha}}^{2(1-\alpha)}
\|G\|_{L^{p}}^{(p-1)(1-\mu_{11})}
\|G\|_{L^{p+\frac{\alpha m}{2}}}^{(p-1)\mu_{11}}
\nonumber\\&\leq C
\|\theta\|_{L^{\infty}}^{2\alpha}
\|\nabla\theta\|_{L^{r+\beta}}^{2(1-\alpha)}
\|G\|_{L^{p}}^{(p-1)(1-\mu_{11})}
\|G\|_{L^{p+\frac{\alpha m}{2}}}^{(p-1)\mu_{11}}\nonumber\\&\leq C
\|\nabla\theta\|_{L^{r+\beta}}^{2(1-\alpha)}
\|G\|_{L^{p}}^{(p-1)(1-\mu_{11})}
\|G\|_{L^{p+\frac{\alpha m}{2}}}^{(p-1)\mu_{11}}\nonumber\\&\leq
\frac{c}{16}
\|G\|_{L^{p+\frac{\alpha m}{2}}}^{p+\frac{\alpha m}{2}}+\frac{c}{16}
\|\nabla\theta\|_{L^{r+\beta}}^{r+\beta}+C\|G\|_{L^{p}}^{p}, \nonumber
\end{align}
where $\mu_{11}\in(0,1)$ is given by
$$\mu_{11}=\frac{[2(1-\alpha)p-r-\beta](2p+\alpha m)}{\alpha m(r+\beta)(p-1)}.$$
We point out that to ensure $\mu_{11}\in(0,1)$, it requires
\begin{align} \label{yyadkb11}
2(1-\alpha)p-r-\beta>0,\quad \alpha m (r+\beta)(p-1)>[2(1-\alpha)p-r-\beta](2p+\alpha m).
\end{align}
To deal with $I_{12}$, we take $\lambda_{12}>1$ and $\mu_{12}\in (0,1)$ satisfying
$$\lambda_{12}=\frac{r+\beta}{r+\alpha+\beta-1}, \qquad \frac{1}{\lambda_{12}p}=\frac{1-\mu_{12}}{p}
+\frac{\mu_{12}}{p+\frac{\alpha m}{2}},$$
then by means of \eqref{sdfghq66} and \eqref{dfexbg6}, we have
\begin{align}\label{svkpwev13}
I_{12}&\leq C\|[\mathcal {R}_{\alpha},\,u_{G}\cdot\nabla]\theta\|_{L^{\frac{\lambda_{12}p}{(\lambda_{12}-1)p+1}}}
\|G\|_{L^{\lambda_{12}p}}^{p-1}
\nonumber\\&\leq C\|\Lambda^{1-\alpha}\theta\|_{L^{\frac{\lambda_{12}}{\lambda_{12}-1}}}
\|G\|_{L^{\lambda_{12}p}}^{p}\nonumber\\&\leq C
\|\Lambda^{1-\alpha}\theta\|_{\dot{B}_{\infty,\infty}^{-(1-\alpha)}}^{\alpha}
\|\Lambda^{1-\alpha}\theta\|_{\dot{B}_{r+\beta,r+\beta}^{\alpha}}^{1-\alpha}
\|G\|_{L^{p}}^{p(1-\mu_{12})}
\|G\|_{L^{p+\frac{\alpha m}{2}}}^{p\mu_{12}}
\nonumber\\&\leq C
\|\theta\|_{L^{\infty}}^{\alpha}
\|\nabla\theta\|_{L^{r+\beta}}^{1-\alpha}
\|G\|_{L^{p}}^{p(1-\mu_{12})}
\|G\|_{L^{p+\frac{\alpha m}{2}}}^{p\mu_{12}}\nonumber\\&\leq C
\|\nabla\theta\|_{L^{r+\beta}}^{1-\alpha}
\|G\|_{L^{p}}^{p(1-\mu_{12})}
\|G\|_{L^{p+\frac{\alpha m}{2}}}^{p\mu_{12}}\nonumber\\&\leq
\frac{c}{16}
\|G\|_{L^{p+\frac{\alpha m}{2}}}^{p+\frac{\alpha m}{2}}+\frac{c}{16}
\|\nabla\theta\|_{L^{r+\beta}}^{r+\beta}+C\|G\|_{L^{p}}^{p}.
\end{align}
To ensure \eqref{svkpwev13}, one needs
$$\frac{p\mu_{12}}{p+\frac{\alpha m}{2}}+\frac{1-\alpha}{r+\beta}<1,$$
or equivalently
\begin{align} \label{svkpwev14}
2(1-\alpha)p<\alpha m(r+\alpha+\beta-1).
\end{align}
We thus get
\begin{align} \label{svkpwev15}
I_{1}\leq \frac{c}{8}
\|G\|_{L^{p+\frac{\alpha m}{2}}}^{p+\frac{\alpha m}{2}}+\frac{c}{8}
\|\nabla\theta\|_{L^{r+\beta}}^{r+\beta}+C\|G\|_{L^{p}}^{p}.
\end{align}
Inserting \eqref{yyadkb10}, \eqref{svkpwev11} and \eqref{svkpwev15} into \eqref{yyadkb08} gives
\begin{align}\label{svkpwev16}
\frac{d}{dt}(\|G(t)\|_{L^{p}}^{p}+\|\nabla\theta(t)\|_{L^{r}}^{r})
\leq C(1+\|G\|_{L^{p}}^{p}),
\end{align}
where $p$ and $r$ satisfy \eqref{yyadkb09}, \eqref{svkpwev12}, \eqref{yyadkb11} and \eqref{svkpwev14}.
Now we take
$$r=(\beta-\epsilon)p$$
for sufficiently small $\epsilon\in (0,\beta)$, then to ensure the validity of such $p$, the lower bound of $\beta$ obeys
\begin{eqnarray}
m>\frac{2(1-\alpha)}{\alpha \beta},\nonumber
\end{eqnarray}
which is consistent with \eqref{dgvzpkq11}.
Moreover, $p$ satisfies $p\geq p_{0}=p_{0}(\alpha,\beta,\epsilon)$.
Therefore, we derive from \eqref{svkpwev16} that
\begin{align}
\frac{d}{dt}(\|G(t)\|_{L^{p}}^{p}+\|\nabla\theta(t)\|_{L^{(\beta-\epsilon)p}}^{(\beta-\epsilon)p})
\leq C(1+\|G\|_{L^{p}}^{p}),\nonumber
\end{align}
which implies
\begin{eqnarray}
\|G(t)\|_{L^{p}}+\|\nabla\theta (t)\|_{L^{(\beta-\epsilon)p}} \leq
C(t,\,u_{0},\,\theta_{0}),\quad p\geq p_{0},\nonumber
\end{eqnarray}
or equivalently
\begin{eqnarray}
\|G(t)\|_{L^{p}}+\|\nabla\theta (t)\|_{L^{\beta^{-}p}} \leq
C(t,\,u_{0},\,\theta_{0}),\quad p\geq p_{0}.\nonumber
\end{eqnarray}
If we further take
$$p\geq\frac{2(1-\beta^{-})}{\alpha\beta^{-}},$$
then it holds
\begin{align}\label{ssgpuy18}
\|\omega\|_{L^{p}}&\leq \|G\|_{L^{p}}+C\|\Lambda^{1-\alpha}\theta\|_{L^{p}}
\nonumber\\&\leq \|G\|_{L^{p}}+C\|\theta\|_{L^{p}}+C\|\nabla\theta \|_{L^{\beta^{-}p}}\nonumber\\& \leq
C(t,\,u_{0},\,\theta_{0}).
\end{align}
Consequently, one has
\begin{eqnarray}
\|\omega(t)\|_{L^{p}} \leq
C(t,\,u_{0},\,\theta_{0}),\quad  \,p\geq\max\left\{p_{0},\,\, \frac{2(1-\beta^{-})}{\alpha\beta^{-}}\right\}.\nonumber
\end{eqnarray}
Therefore, we obtain the desired bound \eqref{yyadkb01}.
\end{proof}

\vskip .1in
\begin{proof}[The proof of Part 2 of Theorem \ref{Th3}]
Recalling \textbf{Method 2} of Part 1 and \eqref{yyadkb01}, we are able to finish the proof of Part 2 of Theorem \ref{Th3}.
\end{proof}

\vskip .2in
\appendix

\section{The proof of \eqref{tdfhb12}, \eqref{yyadkb02} and \eqref{tNew013}}  \label{appSec2}
In this appendix, we prove the inequalities \eqref{tdfhb12}, \eqref{yyadkb02} and \eqref{tNew013} which have been used in the proof of the main result.

\begin{proof}[The proof of \eqref{tdfhb12} and \eqref{yyadkb02}]
The proof is inspired by \cite{CV}. For $\alpha\in (0,2)$, we have
$$\Lambda^{\alpha}f(x)=C(\alpha)\,\,\mbox{p.v.}\int_{\mathbb{R}^{2}}{\frac{f(x)-f(x+y)}
{|y|^{2+\alpha}}\,dy},$$
which yields
\begin{align}\label{dfqxz11}
&|f(x)|^{r-2}f(x)\Lambda^{\alpha}f(x)\nonumber\\&=C(\alpha)\,\,\mbox{p.v.}
\int_{\mathbb{R}^{2}}{\frac{|f(x)|^{r}-|f(x)|^{r-2}f(x)f(x+y)}
{|y|^{2+\alpha}}\,dy}\nonumber\\&=\frac{C(\alpha)}{r}\,\,\mbox{p.v.}
\int_{\mathbb{R}^{2}}{\frac{r|f(x)|^{r}-r|f(x)|^{r-2}f(x)f(x+y)}
{|y|^{2+\alpha}}\,dy}\nonumber\\&=\frac{C(\alpha)}{r}\,\,\mbox{p.v.}
\int_{\mathbb{R}^{2}}{\frac{|f(x)|^{r}-|f(x+y)|^{r}}
{|y|^{2+\alpha}}\,dy}\nonumber\\&\quad+\frac{C(\alpha)}{r}\,\,\mbox{p.v.}
\int_{\mathbb{R}^{2}}{\frac{(r-1)|f(x)|^{r}-r|f(x)|^{r-2}f(x)f(x+y)+|f(x+y)|^{r}}
{|y|^{2+\alpha}}\,dy}
\nonumber\\&=
\frac{1}{r}\Lambda^{\alpha}(|f(x)|^{r})+\mathcal{A},
\end{align}
where $\mathcal{A}$ is given by
$$\mathcal{A}=\frac{C(\alpha)}{r}\,\,\mbox{p.v.}
\int_{\mathbb{R}^{2}}{\frac{(r-1)|f(x)|^{r}-r|f(x)|^{r-2}f(x)f(x+y)+|f(x+y)|^{r}}
{|y|^{2+\alpha}}\,dy}.$$
Notice that
\begin{align}
|f(x)|^{r-2}f(x)f(x+y)\leq |f(x)|^{r-1}|f(x+y)|
\leq \frac{r-1}{r}|f(x)|^{r}+\frac{1}{r}|f(x+y)|^{r},\nonumber
\end{align}
we thus have
\begin{align}\label{dfqxz12}
(r-1)|f(x)|^{r}-r|f(x)|^{r-2}f(x)f(x+y)+|f(x+y)|^{r}\geq0.
\end{align}
Let $\chi$ be a smooth radially cutoff function that vanishes on $|x|\leq 1$ and is identically $1$
for $|x|\geq 2$. Keeping in mind \eqref{dfqxz12}, we can conclude
\begin{align} \label{dfqxz13}
\mathcal{A}&\geq
\frac{C(\alpha)}{r}\int_{\mathbb{R}^{2}}{\frac{(r-1)|f(x)|^{r}-r|f(x)|^{r-2}f(x)f(x+y)+|f(x+y)|^{r}}
{|y|^{2+\alpha}}\chi(\frac{|y|}{R})\,dy}\nonumber\\
&\geq
\frac{C(\alpha)}{r}\int_{\mathbb{R}^{2}}{\frac{(r-1)|f(x)|^{r}-r|f(x)|^{r-2}f(x)f(x+y)}
{|y|^{2+\alpha}}\chi(\frac{|y|}{R})\,dy}
\nonumber\\
&=
C(\alpha)\frac{r-1}{r}\int_{\mathbb{R}^{2}}{\frac{|f(x)|^{r}}
{|y|^{2+\alpha}}\chi(\frac{|y|}{R})\,dy}
-C(\alpha)\int_{\mathbb{R}^{2}}{\frac{|f(x)|^{r-2}f(x)f(x+y)}
{|y|^{2+\alpha}}\chi(\frac{|y|}{R})\,dy}
\nonumber\\
&\geq
C(\alpha)\frac{r-1}{r}\int_{|y|\geq 2R}{\frac{|f(x)|^{r}}
{|y|^{2+\alpha}}\chi(\frac{|y|}{R})\,dy}
-C(\alpha)\int_{\mathbb{R}^{2}}{\frac{|f(x)|^{r-1}|f(x+y)|}
{|y|^{2+\alpha}}\chi(\frac{|y|}{R})\,dy}
\nonumber\\
&=
C(\alpha)\frac{r-1}{r}|f(x)|^{r}\int_{|y|\geq 2R}{\frac{1}
{|y|^{2+\alpha}} \,dy}
-C(\alpha)|f(x)|^{r-1}\int_{|y|\geq R}{\frac{|f(x+y)|}
{|y|^{2+\alpha}}\chi(\frac{|y|}{R})\,dy}
\nonumber\\
&\geq
c_{1}\frac{r-1}{r}|f(x)|^{r}R^{-\alpha}
-C(\alpha)|f(x)|^{r-1}\|f\|_{L^{p}}\left(\int_{|y|\geq R}{\frac{1}
{|y|^{\frac{(2+\alpha)p}{p-1}}}\,dy}\right)^{\frac{p-1}{p}}\nonumber\\
&\geq
c_{1}\frac{r-1}{r}|f(x)|^{r}R^{-\alpha}
-C_{2}|f(x)|^{r-1}\|f\|_{L^{p}}R^{-(\alpha+\frac{2}{p})},
\end{align}
where $c_{1}$ and $C_{2}$ depend only on $\alpha$ and $p$. Now taking
$$R=\left(\frac{2C_{2} r\|f\|_{L^{p}}}{c_{1}(r-1)|f(x)|}\right)^{\frac{p}{2}},$$
we derive from \eqref{dfqxz13} that
\begin{align}  \label{dfqxz14}
\mathcal{A}&\geq c\frac{|f(x)|^{r+\frac{\alpha p}{2}}}{\|f\|_{L^{p}}^{\frac{\alpha p}{2}}},
\end{align}
where
$$c=\frac{c_{1}^{1+\frac{\alpha p}{2}}}{2^{1+\frac{\alpha p}{2}}C_{2}^{\frac{\alpha p}{2}}}\Big(\frac{r-1}{r}\Big)^{1+\frac{\alpha p}{2}}.$$
Inserting \eqref{dfqxz14} into \eqref{dfqxz11} implies the desired inequality \eqref{tdfhb12} immediately.\\

Inspired the proof of \eqref{tdfhb12}, we are able to show \eqref{yyadkb02}.
In view of \eqref{dfqxz11}, one has
\begin{align}\label{asdwp1}
|\nabla f(x)|^{r-2}\nabla f(x)\Lambda^{\beta}\nabla f(x)=
\frac{1}{r}\Lambda^{\beta}(|\nabla f(x)|^{r})+\mathcal{B},
\end{align}
where $\mathcal{B}$ is given by
$$\mathcal{B}=\frac{C(\beta)}{r}\,\,\mbox{p.v.}
\int_{\mathbb{R}^{2}}{\frac{(r-1)|\nabla f(x)|^{r}-r|\nabla f(x)|^{r-2}\nabla f(x)\nabla f(x+y)+|\nabla f(x+y)|^{r}}
{|y|^{2+\beta }}\,dy}.$$
Arguing as \eqref{dfqxz12}, we obtain
\begin{align}
(r-1)|\nabla f(x)|^{r}-r|\nabla f(x)|^{r-2}\nabla f(x)\nabla f(x+y)+|\nabla f(x+y)|^{r}\geq0.\nonumber
\end{align}
Let $\chi$ be given above. It is not hard to see that
\begin{align} \label{asdwp2}
\mathcal{B}&\geq
\frac{C}{r}\int_{\mathbb{R}^{2}}{\frac{(r-1)|\nabla f(x)|^{r}-r|\nabla f(x)|^{r-2}\nabla f(x)\nabla f(x+y)+|\nabla f(x+y)|^{r}}
{|y|^{2+\beta }}\chi(\frac{|y|}{R})\,dy}\nonumber\\
&\geq
\frac{C}{r}\int_{\mathbb{R}^{2}}{\frac{(r-1)|\nabla f(x)|^{r}-r|\nabla f(x)|^{r-2}\nabla f(x)\nabla f(x+y)}
{|y|^{2+\beta }}\chi(\frac{|y|}{R})\,dy}
\nonumber\\
&=
C\frac{r-1}{r}\int_{\mathbb{R}^{2}}{\frac{|\nabla f(x)|^{r}}
{|y|^{2+\beta }}\chi(\frac{|y|}{R})\,dy}
-C \int_{\mathbb{R}^{2}}{\frac{|\nabla f(x)|^{r-2}\nabla f(x)\nabla f(x+y)}
{|y|^{2+\beta }}\chi(\frac{|y|}{R})\,dy}
\nonumber\\
&=
C\frac{r-1}{r}|\nabla f(x)|^{r}\int_{|y|\geq 2R}{\frac{1}
{|y|^{2+\beta }}\chi(\frac{|y|}{R})\,dy}
-C |\nabla f(x)|^{r-2}\nabla f(x)\int_{\mathbb{R}^{2}}{\frac{\nabla_{y} f(x+y)}
{|y|^{2+\beta }}\chi(\frac{|y|}{R})\,dy}
\nonumber\\
&=
C\frac{r-1}{r}|\nabla f(x)|^{r}\int_{|y|\geq 2R}{\frac{1}
{|y|^{2+\beta }}\chi(\frac{|y|}{R})\,dy}
-C |\nabla f(x)|^{r-2}\nabla f(x)\int_{\mathbb{R}^{2}}{f(x+y)\nabla_{y}\Big(\frac{\chi(\frac{|y|}{R})}
{|y|^{2+\beta }}\Big)\,dy}
\nonumber\\
&\geq
C\frac{r-1}{r}|\nabla f(x)|^{r}\int_{|y|\geq 2R}{\frac{1}
{|y|^{2+\beta }} \,dy}
-C |\nabla f(x)|^{r-1}\int_{|y|\geq R}{|f(x+y)|
\Big|\nabla_{y}\Big(\frac{\chi(\frac{|y|}{R})}
{|y|^{2+\beta }}\Big)\Big|\,dy}
\nonumber\\
&\geq
c_{1}\frac{r-1}{r}|\nabla f(x)|^{r}R^{-\beta}
-C|\nabla f(x)|^{r-1}\|f\|_{L^{p}}\left(\int_{|y|\geq R}{\frac{1}
{|y|^{\frac{(3+\beta)p}{p-1}}}\,dy}\right)^{\frac{p-1}{p}}\nonumber\\
&\geq
c_{1}\frac{r-1}{r}|\nabla f(x)|^{r}R^{-\beta}
-C_{2}|\nabla f(x)|^{r-1}\|f\|_{L^{p}}R^{-(1+\beta+\frac{2}{p})},
\end{align}
where $c_{1}$ and $C_{2}$ depend only on $\alpha$ and $p$. Taking
$$R=\left(\frac{2C_{2} r\|f\|_{L^{p}}}{c_{1}(r-1)|\nabla f(x)|}\right)^{\frac{p}{p+2}},$$
we derive from \eqref{asdwp2} that
\begin{align}
\mathcal{B}&\geq c\frac{|\nabla f(x)|^{r+\frac{\beta p}{p+2}}}{\|f\|_{L^{p}}^{\frac{\beta p}{p+2}}}.\nonumber
\end{align}
This along with \eqref{asdwp1} implies
\begin{align}
|\nabla f(x)|^{r-2}\nabla f(x)\Lambda^{\beta}\nabla f(x)\geq
\frac{1}{r}\Lambda^{\beta}(|\nabla f(x)|^{r})+c\frac{|\nabla f(x)|^{r+\frac{\beta p}{p+2}}}{\|f\|_{L^{p}}^{\frac{\beta p}{p+2}}}.\nonumber
\end{align}
As a result, we prove the desired \eqref{yyadkb02}.
\end{proof}

\begin{proof}[The proof of \eqref{tNew013}]
Here we prove that if $(u_{0}, \theta_{0})
\in H^{s}(\mathbb{R}^{2})\times H^{s}(\mathbb{R}^{2})$ with $s>2$, then the solution $(u,\theta)$ of (\ref{Bouss}) with $\alpha>0,\beta>0$ admits the following logarithmic inequality
\begin{align}\label{BBTE1}
\|\nabla u(t)\|_{L^{\infty}} \leq C(1+\|\omega(t)\|_{L^{\infty}})\ln
\Big(e+\int_{0}^{t}{\big(1+\|\omega(\tau)\|_{L^{\infty}}+\|\nabla\theta(\tau)
\|_{L^{\infty}}
\big)^{\Gamma}\,d\tau}\Big),
\end{align}
where $C=C(t,\|u_{0}\|_{H^{s}},\|\theta_{0}\|_{H^{s}})$ and $\Gamma=\Gamma(\alpha,\beta)>1$ is a positive constant depending on $\alpha$ and $\beta$.
We apply $\Lambda^{s}$ to (\ref{Bouss}) and multiply the resultant equations with $(\Lambda^{s}u,\Lambda^{s}\theta)$ to get
\begin{align}\label{BBTE2}
&\frac{d}{dt}(\|\Lambda^{s}u(t)\|_{L^{2}}^{2}+
\|\Lambda^{s}\theta(t)\|_{L^{2}}^{2})+\|\Lambda^{s+\frac{\alpha}{2}}
u\|_{L^{2}}^{2}+\|\Lambda^{s+\frac{\beta}{2}}\theta\|_{L^{2}}^{2}\nonumber\\
&=\int_{\mathbb{R}^{2}}
\Lambda^{s}u e_{2}\cdot \Lambda^{s}\theta\,dx-\int_{\mathbb{R}^{2}}
[\Lambda^{s}, u \cdot
\nabla]u \cdot\Lambda^{s}u\,dx-\int_{\mathbb{R}^{2}}
[\Lambda^{s}, u \cdot
\nabla]\theta \Lambda^{s}\theta\,dx.
\end{align}
Obviously, one gets
\begin{align}\label{BBTE3}
\int_{\mathbb{R}^{2}}
\Lambda^{s}u e_{2}\cdot \Lambda^{s}\theta\,dx &\leq C\|\Lambda^{s}u \|_{L^{2}}
\|\Lambda^{s}\theta\|_{L^{2}}\nonumber\\&\leq C\|u\|_{L^{2}}^{1-\frac{s}{s+\frac{\alpha}{2}}}\|\Lambda^{s+\frac{\alpha}{2}}
u\|_{L^{2}}^{\frac{s}{s+\frac{\alpha}{2}}}
\|\theta\|_{L^{2}}^{1-\frac{s}{s+\frac{\beta}{2}}}
\|\Lambda^{s+\frac{\beta}{2}}\theta\|_{L^{2}}^{\frac{s}{s+\frac{\beta}{2}}}
\nonumber\\&\leq
\frac{1}{4}\|\Lambda^{s+\frac{\alpha}{2}}
u\|_{L^{2}}^{2}+\frac{1}{4}\|\Lambda^{s+\frac{\beta}{2}}\theta\|_{L^{2}}^{2}
+C(\|u \|_{L^{2}}^{2}+\|\theta\|_{L^{2}}^{2})\nonumber\\&\leq C+
\frac{1}{4}\|\Lambda^{s+\frac{\alpha}{2}}
u\|_{L^{2}}^{2}+\frac{1}{4}\|\Lambda^{s+\frac{\beta}{2}}\theta\|_{L^{2}}^{2}.
\end{align}
From now on, we fix $p\geq2$ and $\sigma>0$ as follows
$$p=1+\frac{s-1}{\sigma},\qquad 0<\sigma<\min\left\{s-1,\ \frac{\alpha}{2},\ \frac{\beta}{2}\right\}.$$
By appealing to \eqref{dfexbg6} and the direct interpolation inequalities, we may have
\begin{align}\label{BBTE4}
\int_{\mathbb{R}^{2}}
[\Lambda^{s}, u \cdot
\nabla]u \cdot\Lambda^{s}u\,dx&\leq C\|[\Lambda^{s}, u \cdot
\nabla]u\|_{L^{\frac{2p}{p+1}}}\|\Lambda^{s}u\|_{L^{\frac{2p}{p-1}}}\nonumber\\
&\leq C\|\nabla u\|_{L^{p}}\|\Lambda^{s}u\|_{L^{\frac{2p}{p-1}}}^{2}
\nonumber\\
&\leq C\|\omega\|_{L^{p}}
\|\Lambda^{s}u\|_{\dot{B}_{\infty,\infty}^{-(s-1)}}^{\frac{2}{p}}
\|\Lambda^{s}u\|_{\dot{B}_{2,2}^{\sigma}}^{\frac{2(p-1)}{p}}\nonumber\\
&\leq C\|\omega\|_{L^{p}}
\|\nabla u\|_{\dot{B}_{\infty,\infty}^{0}}^{\frac{2}{p}}
\|\Lambda^{s+\sigma}u\|_{L^{2}}^{\frac{2(p-1)}{p}}
\nonumber\\
&\leq C\|\omega\|_{L^{2}}^{\frac{2}{p}}\|\omega\|_{L^{\infty}}^{\frac{p-2}{p}}
\|\omega\|_{L^{\infty}}^{\frac{2}{p}}
\|\Lambda^{s+\sigma}u\|_{L^{2}}^{\frac{2(p-1)}{p}}
\nonumber\\
&\leq C\left(\|u\|_{L^{2}}^{1-\frac{1}{s+\frac{\alpha}{2}}}
\|\Lambda^{s+\frac{\alpha}{2}}u\|_{L^{2}}
^{\frac{1}{s+\frac{\alpha}{2}}}
\right)^{\frac{2}{p}}\|\omega\|_{L^{\infty}}\nonumber\\
& \quad\times
\left(\|u\|_{L^{2}}^{1-\frac{s+\sigma}{s+\frac{\alpha}{2}}}
\|\Lambda^{s+\frac{\alpha}{2}}u\|_{L^{2}}
^{\frac{s+\sigma}{s+\frac{\alpha}{2}}}
\right)^{\frac{2(p-1)}{p}}
\nonumber\\
&\leq C
\|\Lambda^{s+\frac{\alpha}{2}}u\|_{L^{2}}
^{\frac{1}{s+\frac{\alpha}{2}}\frac{2}{p}+\frac{s+\sigma}{s+\frac{\alpha}{2}}
\frac{2(p-1)}{p}}
 \|\omega\|_{L^{\infty}}
\nonumber\\
&\leq
\frac{1}{8}\|\Lambda^{s+\frac{\alpha}{2}}
u\|_{L^{2}}^{2}+C\|\omega\|_{L^{\infty}}^{\Gamma_{1}(\alpha,\beta)},
\end{align}
where $\Gamma_{1}(\alpha,\beta)$ is given by
$$\Gamma_{1}(\alpha,\beta)=\frac{2}{2-\Lambda_{1}},\qquad\Lambda_{1} \triangleq \frac{1}{s+\frac{\alpha}{2}}\frac{2}{p}+\frac{s+\sigma}{s+\frac{\alpha}{2}}
\frac{2(p-1)}{p}\in (0,2).$$
Similar to \eqref{BBTE4}, one derives
\begin{align}\label{BBTE5}
&\int_{\mathbb{R}^{2}}
[\Lambda^{s}, u \cdot
\nabla]\theta  \Lambda^{s}\theta\,dx\nonumber\\ &\leq C\|[\Lambda^{s}, u \cdot
\nabla]\theta\|_{L^{\frac{2p}{p+1}}}\|\Lambda^{s}\theta\|_{L^{\frac{2p}{p-1}}}
\nonumber\\
&\leq C\left(\|\nabla u\|_{L^{p}}\|\Lambda^{s}\theta\|_{L^{\frac{2p}{p-1}}} +
\|\nabla \theta\|_{L^{p}}\|\Lambda^{s}u\|_{L^{\frac{2p}{p-1}}}\right)
\|\Lambda^{s}\theta\|_{L^{\frac{2p}{p-1}}}
\nonumber\\
&\leq C\|\omega\|_{L^{p}} \|\Lambda^{s}\theta\|_{L^{\frac{2p}{p-1}}}^{2}+ C
\|\nabla \theta\|_{L^{p}} \|\Lambda^{s}u\|_{L^{\frac{2p}{p-1}}}\|\Lambda^{s}\theta\|_{L^{\frac{2p}{p-1}}}
\nonumber\\
&\leq C\|\omega\|_{L^{2}}^{\frac{2}{p}}\|\omega\|_{L^{\infty}}^{\frac{p-2}{p}}
\|\Lambda^{s}\theta\|_{\dot{B}_{\infty,\infty}^{-(s-1)}}^{\frac{2}{p}}
\|\Lambda^{s}\theta\|_{\dot{B}_{2,2}^{\sigma}}^{\frac{2(p-1)}{p}}
\nonumber\\ &\quad+C
\|\nabla \theta\|_{L^{2}}^{\frac{2}{p}}\|\nabla \theta\|_{L^{\infty}}^{\frac{p-2}{p}}
\|\Lambda^{s}u\|_{\dot{B}_{\infty,\infty}^{-(s-1)}}^{\frac{1}{p}}
\|\Lambda^{s}u\|_{\dot{B}_{2,2}^{\sigma}}^{\frac{p-1}{p}}
\|\Lambda^{s}\theta\|_{\dot{B}_{\infty,\infty}^{-(s-1)}}^{\frac{1}{p}}
\|\Lambda^{s}\theta\|_{\dot{B}_{2,2}^{\sigma}}^{\frac{p-1}{p}}
\nonumber\\
&\leq C\Big(\|u\|_{L^{2}}^{1-\frac{1}{s+\frac{\alpha}{2}}}
\|\Lambda^{s+\frac{\alpha}{2}}u\|_{L^{2}}
^{\frac{1}{s+\frac{\alpha}{2}}}\Big)^{\frac{2}{p}}\|\omega\|_{L^{\infty}}^{\frac{p-2}{p}}
\|\nabla\theta\|_{L^{\infty}}^{\frac{2}{p}}
\|\Lambda^{s+\sigma}\theta\|_{L^{2}}^{\frac{2(p-1)}{p}}
\nonumber\\ &\quad+C
\Big(\|\theta\|_{L^{2}}^{1-\frac{1}{s+\frac{\beta}{2}}}
\|\Lambda^{s+\frac{\beta}{2}}\theta\|_{L^{2}}
^{\frac{1}{s+\frac{\beta}{2}}}\Big)^{\frac{2}{p}}\|\nabla \theta\|_{L^{\infty}}^{\frac{p-2}{p}}
\|\omega\|_{L^{\infty}}^{\frac{1}{p}}
\|\Lambda^{s+\sigma}u\|_{L^{2}}^{\frac{p-1}{p}}
\|\nabla\theta\|_{L^{\infty}}^{\frac{1}{p}}
\|\Lambda^{s+\sigma}\theta\|_{L^{2}}^{\frac{p-1}{p}}
\nonumber\\
&\leq C\Big(\|u\|_{L^{2}}^{1-\frac{1}{s+\frac{\alpha}{2}}}
\|\Lambda^{s+\frac{\alpha}{2}}u\|_{L^{2}}
^{\frac{1}{s+\frac{\alpha}{2}}}\Big)^{\frac{2}{p}}
(\|\omega\|_{L^{\infty}}+
\|\nabla\theta\|_{L^{\infty}})
\left(\|\theta\|_{L^{2}}^{1-\frac{s+\sigma}{s+\frac{\beta}{2}}}
\|\Lambda^{s+\frac{\beta}{2}}\theta\|_{L^{2}}
^{\frac{s+\sigma}{s+\frac{\beta}{2}}}
\right)^{\frac{2(p-1)}{p}}
\nonumber\\ &\quad+C
\Big(\|\theta\|_{L^{2}}^{1-\frac{1}{s+\frac{\beta}{2}}}
\|\Lambda^{s+\frac{\beta}{2}}\theta\|_{L^{2}}
^{\frac{1}{s+\frac{\beta}{2}}}\Big)^{\frac{2}{p}}(\|\omega\|_{L^{\infty}}+
\|\nabla\theta\|_{L^{\infty}})
\left(\|u\|_{L^{2}}^{1-\frac{s+\sigma}{s+\frac{\alpha}{2}}}
\|\Lambda^{s+\frac{\alpha}{2}}u\|_{L^{2}}
^{\frac{s+\sigma}{s+\frac{\alpha}{2}}}
\right)^{\frac{p-1}{p}}
\nonumber\\ &\quad \quad\times
\left(\|\theta\|_{L^{2}}^{1-\frac{s+\sigma}{s+\frac{\beta}{2}}}
\|\Lambda^{s+\frac{\beta}{2}}\theta\|_{L^{2}}
^{\frac{s+\sigma}{s+\frac{\beta}{2}}}
\right)^{\frac{p-1}{p}}
\nonumber\\
&\leq C
\|\Lambda^{s+\frac{\alpha}{2}}u\|_{L^{2}}
^{\frac{1}{s+\frac{\alpha}{2}}\frac{2}{p}}
\|\Lambda^{s+\frac{\beta}{2}}\theta\|_{L^{2}}
^{\frac{s+\sigma}{s+\frac{\beta}{2}}\frac{2(p-1)}{p}}(\|\omega\|_{L^{\infty}}+
\|\nabla\theta\|_{L^{\infty}})
\nonumber\\ &\quad+C
\|\Lambda^{s+\frac{\beta}{2}}\theta\|_{L^{2}}
^{\frac{1}{s+\frac{\beta}{2}}\frac{2}{p}
+\frac{s+\sigma}{s+\frac{\beta}{2}}\frac{p-1}{p}}
\|\Lambda^{s+\frac{\alpha}{2}}u\|_{L^{2}}
^{\frac{s+\sigma}{s+\frac{\alpha}{2}}\frac{p-1}{p}}(\|\omega\|_{L^{\infty}}+
\|\nabla\theta\|_{L^{\infty}})
\nonumber\\
&\leq
\frac{1}{8}\|\Lambda^{s+\frac{\alpha}{2}}
u\|_{L^{2}}^{2}+\frac{1}{4}\|\Lambda^{s+\frac{\beta}{2}}\theta\|_{L^{2}}^{2}+
C(\|\omega\|_{L^{\infty}}+
\|\nabla\theta\|_{L^{\infty}})^{\Gamma_{2}(\alpha,\beta)}\nonumber\\
&\quad+
C(\|\omega\|_{L^{\infty}}+
\|\nabla\theta\|_{L^{\infty}})^{\Gamma_{3}(\alpha,\beta)},
\end{align}
where $\Gamma_{2}(\alpha,\beta)$ and $\Gamma_{3}(\alpha,\beta)$ are given by
$$\Gamma_{2}(\alpha,\beta)=\frac{2}{2-\Lambda_{2}},\qquad\Lambda_{2} \triangleq \frac{1}{s+\frac{\alpha}{2}}\frac{2}{p}+\frac{s+\sigma}{s+\frac{\beta}{2}}
\frac{2(p-1)}{p}\in (0,2),\qquad \qquad \ $$
$$\Gamma_{3}(\alpha,\beta)=\frac{2}{2-\Lambda_{3}},\qquad\Lambda_{3} \triangleq \frac{s+\sigma}{s+\frac{\alpha}{2}}\frac{p-
1}{p}+\frac{s+\sigma}{s+\frac{\beta}{2}}
\frac{p-1}{p}+\frac{1}{s+\frac{\beta}{2}}
\frac{2}{p}\in (0,2).$$
Inserting \eqref{BBTE3}, \eqref{BBTE4} and \eqref{BBTE5} into \eqref{BBTE2} and omitting the positive dissipative terms, it yields
\begin{align}\label{BBTE6}
 \frac{d}{dt}(\|\Lambda^{s}u(t)\|_{L^{2}}^{2}+
\|\Lambda^{s}\theta(t)\|_{L^{2}}^{2}) \leq C(1+\|\omega\|_{L^{\infty}}+
\|\nabla\theta\|_{L^{\infty}})^{\Gamma(\alpha,\beta)},
\end{align}
where
$$\Gamma(\alpha,\beta)=\max\{\Gamma_{1}(\alpha,\beta),\ \Gamma_{2}(\alpha,\beta),\ \Gamma_{3}(\alpha,\beta)\}.$$
 Integrating \eqref{BBTE6} in time leads to
\begin{align}\label{BBTE7}
 \|\Lambda^{s}u(t)\|_{L^{2}}^{2}  \leq C_{0}+C\int_{0}^{t}(1+\|\omega(\tau)\|_{L^{\infty}}+
\|\nabla\theta(\tau)\|_{L^{\infty}})^{\Gamma(\alpha,\beta)}\,d\tau.
\end{align}
Now applying \eqref{BBTE7} to \eqref{sdffgbj98}, we thus obtain
\begin{align}
\|\nabla u(t)\|_{L^{\infty}}
&\leq C\Big(1+\|u(t)\|_{L^{2}}+
\|\omega(t)\|_{L^{\infty}}\Big) \ln\big(e+\|
\Lambda^{s}u(t)\|_{L^{2}}\big)\nonumber\\
&\leq C(1+\|\omega(t)\|_{L^{\infty}})\ln
\Big(e+\int_{0}^{t}{\big(1+\|\omega(\tau)\|_{L^{\infty}}+\|\nabla\theta(\tau)
\|_{L^{\infty}}
\big)^{\Gamma}\,d\tau}\Big),\nonumber
\end{align}
which immediately implies the desired estimate \eqref{BBTE1}.
\end{proof}

\vskip .2in
\textbf{Statements and Declarations:} On behalf of all authors, the corresponding author states that there is no conflict of interest.\\

\textbf{Data availability:} Since this work is of abstract theoretical nature, no data sets are generated or analyzed. One
can obtain the relevant materials from the reference list.

\vskip .3in
\section*{Acknowledgments}
Stefanov was partially supported by   NSF-DMS \# 2204788. Wu was partially supported by NSF-DMS \# 2104682 and \# 2309748. Xu was partially supported by the National Key R\&D Programe of China (Grant {No. 2020YFA0712900}) and the National Natural Science Foundation of China (Grant No. 12171040, No. 11771045 and No. 11871087).


\vskip .3in
\begin{thebibliography}{00} \frenchspacing

\bibitem{BCD}
H. Bahouri, J.-Y. Chemin, R. Danchin, \emph{Fourier Analysis and Nonlinear
Partial Differential Equations}, Grundlehren der mathematischen
Wissenschaften, 343, Springer (2011).

\bibitem{Can}
J. Cannon, E. DiBenedetto, \emph{The initial value problem for the Boussinesq equation
with data in $L^{p}$}, Lecture Notes in Mathematics, Vol. 771. Springer, Berlin, (1980), 129--144.

\bibitem{CNpr97}
D. Chae,  H. Nam, \emph{Local existence and blow-up criterion for the
boussinesq equations},  Proc. Roy. Soc. Edinburgh Sect. A, \textbf{127} (1997), 935--946.

\bibitem{C1}
D. Chae, \emph{Global regularity for the 2D Boussinesq equations with
partial viscosity terms}, Adv. Math. \textbf{203} (2006), 497--513.

\bibitem{Chenhou21}
J. Chen, T.Y. Hou, \emph{Finite time blowup of 2D Boussinesq and 3D Euler equations with $C^{1,\alpha}$ velocity and boundary}, Commun. Math. Phys. \textbf{383} (2021) 1559--1667.


\bibitem{Chenhou}
J. Chen, T. Y. Hou, \emph{On stability and instability of $C^{1,\alpha}$ singular solutions to the 3D Euler and 2D Boussinesq equations}, Comm. Math. Phys., \textbf{383} (2024),no. 5, Paper No. 112, 53 pp.

\bibitem{Chenhou25}
J. Chen, T. Hou, \emph{Stable nearly self-similar blowup of the 2D Boussinesq and 3D Euler equations with smooth data II: rigorous numerics}, Multiscale Model. Simul. \textbf{23} (2025), 25--130.

\bibitem{Christw}
M. Christ, M. Weinstein, \emph{Dispersion of small amplitude solutions of the generalized Korteweg-de Vries equation}, J. Funct. Anal., \textbf{100},  (1991), 87--109.

\bibitem{CV}
P. Constantin, V. Vicol, \emph{Nonlinear maximum principles for dissipative linear nonlocal
operators and applications}, Geom. Funct. Anal. \textbf{22} (2012), 1289--1321.

\bibitem{Clmzad}
D. C\'ordoba, A. Lain-Sanclemente, L. Martinez-Zoroa, \emph{Finite-time singularity via multi-layer degenerate pendula for the 2D Boussinesq
equation with uniform $C^{1,\sqrt{\frac{4}{3}}-1-\epsilon}\cap L^2$ force}, Adv. Math. \textbf{480} (2025), part A, Paper No. 110480, 205 pp.

\bibitem{CC}
 A. C\'ordoba, D. C\'ordoba, \emph{A maximum princple applied to quasi-geostroohhic equations}, Comm. Math. Phys. {\bf 249} (2004),  511-528.

\bibitem{Danchinp09}
R. Danchin, M. Paicu, \emph{Global well-posedness issues for the inviscid Boussinesq system with Yudovich's type data}, Commun. Math. Phys. \textbf{290} (2009), 1--14.

\bibitem{Elgindije}
T. Elgindi, I. Jeong, \emph{Finite-time singularity formation for strong solutions to the Boussinesq system}, Ann. PDE \textbf{6} (2020), no. 1, Paper No. 5, 50 pp.

\bibitem{ElgindiPA}
T. Elgindi, F. Pasqualotto, \emph{From instability to singularity formation in incompressible fluids}, arXiv:2310.19780v1 [math.AP].


\bibitem{HK1}
T. Hmidi, S. Keraani, \emph{On the global well-posedness of the Boussinesq system with zero viscosity}, Indiana Univ. Math. J. \textbf{58} (2009), 1591--1618.


\bibitem{HK3}
T. Hmidi, S. Keraani, F. Rousset, \emph{Global well-posedness for a
Boussinesq-Navier-Stokes system with critical dissipation}, J.
Differential Equations \textbf{249} (2010), 2147--2174.

\bibitem{HK4}
T. Hmidi, S. Keraani, F. Rousset, \emph{Global well-posedness for
Euler-Boussinesq system with critical dissipation}, Comm. Partial
Differential Equations \textbf{36} (2011), 420--445.


\bibitem{Hzer10}
T. Hmidi, M. Zerguine, \emph{On the global well-posedness of the Euler-Boussinesq system with fractional dissipation}, Physica D \textbf{239} (2010), 1387--1401.


\bibitem{HL}
T. Y. Hou, C. Li, \emph{Global well-posedness of the viscous Boussinesq
equations}, Discrete Contin. Dyn. Syst. \textbf{12} (2005),
1--12.

\bibitem{JMWZ}
Q. Jiu, C. Miao, J. Wu, Z. Zhang, \emph{The 2D incompressible Boussinesq
equations with general critical dissipation},  SIAM J. Math. Anal. \textbf{46} (2014), 3426--3454.

\bibitem{MB}
 A. Majda, A. Bertozzi, \emph{Vorticity and Incompressible
Flow}, Cambridge University Press, Cambridge, 2001.

\bibitem{MX}
C. Miao, L. Xue, \emph{On the global well-posedness of a class of
Boussinesq-Navier-Stokes systems}, NoDEA Nonlinear Differential
Equations Appl. \textbf{18} (2011), 707--735.

\bibitem{Pe1987}
J. Pedlosky, \emph{Geophysical Fluid Dynamics}, Springer, New York, 1987.

\bibitem{SW}
A. Stefanov, J. Wu, \emph{A global regularity result for the 2D Boussinesq equations with critical dissipation}, J. Anal. Math. \textbf{137} (2019), 269--290.

\bibitem{SWXY}
A. Stefanov, J. Wu, X. Xu, Z. Ye, \emph{Global regularity results of the 2D fractional Boussinesq equations},  Math. Ann. 391 (2025), 5965--6012.

\bibitem{wuxuxueye}
J. Wu, X. Xu, L. Xue, Z. Ye, \emph{Regularity results for the 2D Boussinesq equations with critical or supercritical dissipation}, Commun. Math. Sci. \textbf{14} (2016), 1963--1997.

\bibitem{YJW}
W. Yang, Q. Jiu, J. Wu, \emph{Global well-posedness for a class of 2D Boussinesq systems with fractional dissipation}, J. Differential Equations \textbf{257} (2014), 4188--4213.

\bibitem{Yemmas}
Z. Ye, \emph{Global smooth solution to the 2D Boussinesq equations with fractional dissipation}, Math. Methods Appl. Sci. \textbf{40} (2017), 4595--4612.


\bibitem{YeNA2020}
Z. Ye, \emph{An alternative approach to global regularity for the 2D Euler-Boussinesq equations with critical dissipation}, Nonlinear Anal. \textbf{190} (2020), 111591, 5 pp.

\bibitem{YX201502}
Z. Ye, X. Xu, \emph{Global well-posedness of the 2D Boussinesq equations
with fractional Laplacian dissipation}, J. Differential Equations \textbf{260} (2016), 6716--6744.

\bibitem{zlswyz}
D. Zhou, Z. Li, H. Shang, J. Wu, B. Yuan, J. Zhao, \emph{Global well-posedness for the 2D fractional Boussinesq equations in the subcritical case}, Pacific J. Math. \textbf{298} (2019), 233--255.

\end{thebibliography}
\end{document}